%
%  Time-stamp: <2018-10-12 07:13:49 tomasp>
%
\documentclass[11pt,a4paper]{amsart}

\pdfpkresolution=1200
\pdfpkmode={lexmarkr}
\pdfmapfile{}

\usepackage{amsmath,amsthm,amsfonts,amssymb,mathrsfs}

\theoremstyle{plain}
\newtheorem{theorem}{Theorem}[section]
\newtheorem{lemma}[theorem]{Lemma}
\newtheorem{corollary}[theorem]{Corollary}

\theoremstyle{definition}
\newtheorem{definition}[theorem]{Definition}
\newtheorem{remark}[theorem]{Remark}

\DeclareMathOperator{\dimh}{dim_H}
\DeclareMathOperator{\ldimh}{\underline{dim}_H}
\DeclareMathOperator{\udimh}{\overline{dim}_H}

\DeclareMathOperator{\supp}{supp}
\DeclareMathOperator{\E}{\mathsf{E}}

\DeclareMathOperator{\var}{var}
\DeclareMathOperator*{\essinf}{ess\,inf}
\DeclareMathOperator*{\esssup}{ess\,sup}

\allowdisplaybreaks[3]

\title[Inhomogeneous potentials]{Inhomogeneous potentials, Hausdorff dimension and shrinking targets}
\author{Tomas Persson}
\address{Centre for Mathematical Sciences, Lund
  University, Box 118, 221 00 Lund, Sweden}
\email{tomasp@maths.lth.se}
\date{\today}
\subjclass[2010]{37C45, 37E05, 28A78, 28A05, 60D05}

\thanks{I thank Fredrik Ekstr\"{o}m and Esa J\"arvenp\"a\"a for their
  comments on the paper. I also thank institut Mittag-Leffler and the
  program ``Fractal Geometry and Dynamics'', during which this paper
  was finished, as well as the referee for reading the manuscript with
  great care.}

\setlength{\footskip}{18pt}

\begin{document}

\begin{abstract}
  Generalising a construction of Falconer, we consider classes of
  $G_\delta$-subsets of $\mathbb{R}^d$ with the property that sets
  belonging to the class have large Hausdorff dimension and the class
  is closed under countable intersections. We relate these classes to
  some inhomogeneous potentials and energies, thereby providing some
  useful tools to determine if a set belongs to one of the classes.

  As applications of this theory, we calculate, or at least estimate,
  the Hausdorff dimension of randomly generated limsup-sets, and sets
  that appear in the setting of shrinking targets in dynamical
  systems. For instance, we prove that for $\alpha \geq 1$,
  \[
  \dimh \{ \, y : | T_a^n (x) - y| < n^{-\alpha} \text{ infinitely
    often} \, \} = \frac{1}{\alpha},
  \]
  for almost every $x \in [1-a,1]$, where $T_a$ is a quadratic map
  with $a$ in a set of parameters described by Benedicks and Carleson.
\end{abstract}

\maketitle

\section{Introduction}

There has recently been some attention given to shrinking targets and
randomly generated limsup-sets. If $T \colon M \to M$ is a dynamical
system and $M$ is a metric space, then the sequence of balls
$B(y,r_n)$, where $r_n \searrow 0$, is called a shrinking target, and
one is interested in whether, given an $x \in M$, the orbit of $x$
hits the target infinitely many times or not, that is whether $T^n(x)
\in B(y,r_n)$ holds for infinitely many $n$ or not. It is usually not
possible to say anything interesting about this for general points $x$
and $y$, but there are several results for ``typical'' $x$ or $y$.

For instance, if $\mu$ is a $T$-invariant measure, then one can
consider the sets
\begin{align*}
  E (x,r_n) &= \{\, y : T^n (x) \in B(y,r_n) \text{ for infinitely
    many } n \,\},\\ F (y,r_n) &= \{\, x : T^n (x) \in B(y,r_n) \text{
    for infinitely many } n \,\},
\end{align*}
and try to say something about the sizes of these sets. Hill and
Velani \cite{hillvelani} studied sets of the form $F(y, r_n)$ when $T$
is a rational map of the Riemann sphere and $M$ is its Julia set on
which $T$ is expanding. They estimated the Hausdorff dimension of
$F(y,r_n)$ when $r_n = e^{-\tau n}$ and calculated it when $r_n =
|(T^n)' (x)|^{-\tau}$. Similar results have been proved for the
Gau\ss-map by Li, Wang, Wu and Xu \cite{liwangwuxu}. For
$\beta$-transformations such results were obtained by Bugeaud and Wang
\cite{bugeaudwang}, and by Bugeaud and Liao
\cite{bugeaudliao}. Aspenberg and Persson \cite{aspenbergpersson}
obtained some results for piecewise expanding maps that are not
necessarily Markov maps.

The Hausdorff dimension of sets of the form $E(x,r_n)$ when $T \colon
x \mapsto 2x \mod 1$ was calculated by Fan, Schmeling and Troubetzkoy
\cite{fanschmelingtroubetzkoy}. Liao and Seuret \cite{liaoseuret}
considered the case when $T$ is an expanding Markov map. Persson and
Rams \cite{perssonrams} considered more general piecewise expanding
maps that are not necessarily Markov maps.

In this paper we shall study sets of the type $E (x,r_n)$. Since $T^n
(x) \in B(y,r_n)$ if and only if $y \in B(T^n(x),r_n)$, we have
\[
E(x,r_n) = \bigcap_{k=1}^\infty \bigcup_{n=k}^\infty B(T^n(x),r_n) =
\limsup_n B(T^n(x),r_n).
\]
Hence, we are dealing with a set which is the limit superior of a
sequence of balls $B(T^n(x),r_n)$. By Birkhoff's ergodic theorem, the
centres of the balls are distributed according to the measure $\mu$
for $\mu$-almost every $x$, and if the system $(X,T,\mu)$ is
sufficiently fast mixing, then one might expect that for $\mu$-almost
every $x$, the sequence of balls behaves in a random way. It is
therefore reasonable to expect that the Hausdorff dimension of
$E(x,r_n)$ is typically the same as that of the set $\limsup
B(x_n,r_n)$, where $x_n$ are random points that are independent and
distributed according to the measure $\mu$.

This brings us to the randomly generated limsup-sets or random covers,
studied in
\cite{fanwu,durand,jarvenpaaetal,persson,fengetal,seuret,ekstrompersson}.
In this paper we will build on ideas from \cite{persson} to develop a
new method for analysing the Hausdorff dimension of limsup-sets. In
some sense, the idea is a development of the following (new) proof of
the following classical lemma.

\begin{lemma}[Frostman {\cite[Th\'{e}or\`{e}me~47.2]{frostman}}]
  \label{lem:classical}
  Suppose $E \subset \mathbb{R}^d$ and that $\mu$ is a measure with
  $\emptyset \neq \supp \mu \subset E$. If
  \[
  \iint |x-y|^{-s} \, \mathrm{d}\mu (x) \mathrm{d} \mu (y) < \infty
  \]
  then $\dimh E \geq s$.
\end{lemma}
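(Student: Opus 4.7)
I would proceed via the mass distribution principle, with every step driven by the Riesz-type potential
\[
\phi_s(x) = \int |x-y|^{-s} \, \mathrm{d}\mu(y).
\]
The plan is to show that a bounded sublevel set of $\phi_s$ carries positive $\mu$-mass and also has positive $\mathcal{H}^s$-measure, which then forces $\dimh E \geq s$.

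First, I would use the hypothesis together with Fubini's theorem: since $\int \phi_s(x) \, \mathrm{d}\mu(x) < \infty$, the potential $\phi_s$ is finite $\mu$-almost everywhere. Hence, taking $M$ sufficiently large, the sublevel set $A = \{\, x : \phi_s(x) \leq M \,\}$ satisfies $\mu(A) > 0$. The next step is the pointwise observation that for every $x \in A$ and every $r > 0$, integrating the inequality $|x-y|^{-s} \geq r^{-s}$ over $B(x,r)$ gives
\[
r^{-s} \mu(B(x,r)) \leq \int_{B(x,r)} |x-y|^{-s} \, \mathrm{d}\mu(y) \leq \phi_s(x) \leq M,
\]
so that $\mu(B(x,r)) \leq M r^s$ whenever $x \in A$.

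To apply the mass distribution principle, I would promote this estimate from centres in $A$ to arbitrary centres by passing to the restricted measure $\mu' = \mu|_A$. If $B(x,r) \cap A \neq \emptyset$, pick any $z$ in the intersection; then $B(x,r) \subset B(z,2r)$ and the estimate above gives $\mu'(B(x,r)) \leq M 2^s r^s$. Otherwise $\mu'(B(x,r)) = 0$. The mass distribution principle then yields $\mathcal{H}^s(\supp \mu') > 0$, and since $\supp \mu' \subset \supp \mu \subset E$, we conclude $\dimh E \geq s$.

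The only place where the argument needs more than one line is the passage from ``centres in $A$'' to ``arbitrary centres'' in the ball estimate, and this is settled by the doubling trick above. Everything else is Fubini plus a single elementary inequality inside the integral, so I do not anticipate any essential obstacle; the main conceptual point is simply that the energy integral immediately produces both a positive-mass set and a uniform ball bound on it.
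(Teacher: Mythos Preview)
Your argument is correct and is in fact the standard textbook proof of Frostman's lemma: restrict $\mu$ to a sublevel set of the potential, verify the Frostman condition via the doubling trick, and invoke the mass distribution principle.

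The paper, however, deliberately gives a different (and, as it says, new) proof. Instead of passing to a sublevel set, it defines a new measure
\[
\nu(A) = \int_A \phi_s(x)^{-1}\,\mathrm{d}\mu(x)
\]
and shows, using Jensen's inequality applied to the convex function $t \mapsto t^{-1}$, that $\nu(U) \le |U|^s$ for \emph{every} set $U$ --- not just balls, and without any doubling step or restriction to a good subset. Your approach is arguably more elementary (no convexity needed), but the paper's approach is chosen for a reason: the Jensen-inequality trick is precisely what drives the later results on inhomogeneous potentials (Lemma~\ref{lem:frostman} via Lemma~\ref{lem:frostman2}), where the kernel $|x-y|^{-s}$ is replaced by $\mu(Q(x,y))^{-\theta}$ and there is no simple doubling argument available. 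So the paper's proof of this classical lemma is really a warm-up for the machinery that follows, whereas your proof, while perfectly valid here, does not extend to that setting.
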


\begin{proof}
  We may assume that $\mu (E)$ is finite, since we may replace $\mu$
  by a restriction to a set of finite measure. Since $\iint |x-y|^{-s}
  \, \mathrm{d}\mu (x) \mathrm{d} \mu (y) < \infty$, the function $x
  \mapsto \int |x-y|^{-s} \, \mathrm{d} \mu (y)$ is finite
  $\mu$-almost everywhere, and it is positive on a set of positive
  measure. Hence
  \[
  \nu (A) = \int_A \biggl( \int |x-y|^{-s} \, \mathrm{d} \mu (y)
  \biggr)^{-1} \, \mathrm{d} \mu (x)
  \]
  defines a measure $\nu$ with $\nu (E) > 0$ and $\supp \nu \subset
  E$. The measure $\nu$ satisfies $\nu(U) \leq |U|^s$ for any set $U$,
  where $|U|$ denotes the diameter of $U$. This is proved using
  Jensen's inequality in the following way.
  \begin{align*}
    \nu (U) &= \int_U \biggl( \int |x-y|^{-s} \, \mathrm{d} \mu (y)
    \biggr)^{-1} \, \mathrm{d} \mu (x) \\ &\leq \int_U \biggl( \int_U
    |x-y|^{-s} \, \frac{\mathrm{d} \mu (y)}{\mu (U)} \biggr)^{-1} \,
    \frac{\mathrm{d} \mu (x)}{\mu (U)} \\ &\leq \int_U \int_U
    |x-y|^{s} \, \frac{\mathrm{d} \mu (y)}{\mu (U)} \frac{\mathrm{d}
      \mu (x)}{\mu (U)} \leq |U|^s.
  \end{align*}
  
  Now, if $\{U_k\}$ is a collection of sets that cover $E$, then
  \[
  \sum_k |U_k|^s \geq \sum_k \nu (U_k) \geq \nu \biggl( \bigcup_k U_k
  \biggr) \geq \nu (E).
  \]
  This proves that $\mathscr{H}^s (E) \geq \nu (E)$ where $\mathscr{H}^s$
  is the $s$-dimensional Hausdorff measure. Hence $\dimh E \geq s$.

  Notice that if we use instead the potential $\int_{|x-y|<\delta}
  |x-y|^{-s} \, \mathrm{d} \mu (y)$, we get an estimate on
  $\mathscr{H}_\delta^s (E)$ which lets us conclude that the Hausdorff
  measure $\mathscr{H}^s (E)$ is infinite.
\end{proof}

Cleary, since the proof above only rely on Jensen's inequality (convexity),
it can easily be generalised to more general settings.

In \cite{persson}, a lower estimate on the expected value of the
Hausdorff dimension of a randomly generated limsup-set $E = \limsup_n
U_n$ was given, where $U_n$ are open subsets of the $d$-dimensional
torus $\mathbb{T}^d$, with fixed shape but randomly translated and
distributed according to the Lebesgue measure. (This lower estimate
has subsequently been turned into an equality in \cite{fengetal}.) The
proof was based on the following lemma from \cite{perssonreeve}.

\begin{lemma}[Simplified version of Theorem~1.1 of \cite{perssonreeve}] \label{lem:perssonreeve}
  Let $E_n$ be open subsets of\/ $\mathbb{T}^d$, and $\mu_n$ Borel
  probability measures, with $\supp \mu_n \subset E_n$, that converge
  weakly to the Lebesgue measure on $\mathbb{T}^d$. If there is a
  constant $C$ such that
  \begin{equation} \label{eq:bounded_energy}
    \iint |x - y|^{-s} \, \mathrm{d} \mu_n (x) \mathrm{d} \mu_n (y)
    \leq C
  \end{equation}
  holds for all $n$, then $\limsup_n E_n$ satisfies $\dimh \limsup_n
  E_n \geq s$.
\end{lemma}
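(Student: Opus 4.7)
The plan is to imitate the proof of Lemma~\ref{lem:classical}: construct a non-zero Borel measure $\rho$ with $\rho(U) \leq C'|U|^s$ for every $U$ and with $\supp \rho \subset \limsup_n E_n$, so that the covering argument from Lemma~\ref{lem:classical} yields $\mathscr{H}^s(\limsup_n E_n) > 0$ and hence $\dimh \limsup_n E_n \geq s$.

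The first move is to build, for each $k$, a tail measure with bounded $s$-energy. A natural candidate is the weighted average $\nu_k := \sum_{n \geq k} c_n \mu_n$, with $c_n > 0$ normalised so that the total mass is one; this is a probability measure supported in the open set $F_k := \bigcup_{n \geq k} E_n$. Its Riesz energy can be controlled via the Fourier representation of the $s$-energy together with Cauchy--Schwarz on the cross terms,
\[
\iint |x-y|^{-s}\,\mathrm{d}\mu_n(x)\mathrm{d}\mu_m(y) = c_{s,d}\int \hat\mu_n(\xi)\overline{\hat\mu_m(\xi)}|\xi|^{s-d}\,\mathrm{d}\xi \leq \sqrt{I_s(\mu_n)\,I_s(\mu_m)} \leq C,
\]
so the $s$-energy of $\nu_k$ is bounded by $C$ uniformly in $k$. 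Applying the Frostman-style renormalisation from Lemma~\ref{lem:classical} to $\nu_k$ then yields a measure $\tilde\nu_k$, supported in $F_k$, satisfying $\tilde\nu_k(U) \leq |U|^s$ for every $U$ and, by Jensen's inequality, $\tilde\nu_k(\mathbb{T}^d) \geq 1/C$.

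Extracting a weak-$*$ subsequential limit $\rho$ of $(\tilde\nu_k)$ as $k \to \infty$ produces a candidate measure with $\rho(\mathbb{T}^d) \geq 1/C$ and with the upper bound $\rho(U) \leq |U|^s$ preserved on open $U$. The step that I expect to be the main obstacle is identifying $\supp \rho$ as a subset of $\limsup_n E_n = \bigcap_k F_k$. A priori $\rho$ lives only in $\bigcap_k \overline{F_k}$, and the hypothesis $\mu_n \to \lambda$ forces $F_k$ to be dense in $\mathbb{T}^d$ for every $k$, so $\bigcap_k \overline{F_k} = \mathbb{T}^d$: the naive weak-limit argument by itself gives only the trivial bound $\dimh \mathbb{T}^d \geq s$. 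Overcoming this requires localising the construction --- for every open ball $B$, produce a measure supported in $B \cap \limsup_n E_n$ with the $s$-upper bound, exploiting that $\mu_n(B) \to \lambda(B) > 0$ so that the energy bound persists upon conditioning to $B$. The resulting ``local positivity'' of $\mathscr{H}^s$ in every ball is precisely the Falconer-class property alluded to in the abstract; it is stable under countable intersections, and applied to the $G_\delta$ representation $\limsup_n E_n = \bigcap_k F_k$ it produces the global measure and the desired dimension bound.
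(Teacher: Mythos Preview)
Your diagnosis of the obstacle is exactly right: a weak-$*$ limit of the renormalised measures $\tilde\nu_k$ lands only in $\bigcap_k \overline{F_k}=\mathbb{T}^d$, so that route is a dead end. Your final paragraph also points in the right direction---localise and invoke the large-intersection machinery---but two things are off and one essential mechanism is missing.

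First, the averaging $\nu_k=\sum_{n\ge k}c_n\mu_n$ and the Fourier/Cauchy--Schwarz control of the cross terms are an unnecessary detour. The paper (in the generalisation, Lemma~\ref{lem:frostman} via Lemma~\ref{lem:frostman2}) works with a \emph{single} $\mu_n$ restricted to a dyadic cube $D$: set $K(x)=\int_D |x-y|^{-s}\,\mathrm{d}\mu_n(y)$ and $\nu(A)=\int_A K^{-1}\,\mathrm{d}\mu_n\big/\int_D K^{-1}\,\mathrm{d}\mu_n$. Jensen gives $\nu(A)\le 2C\,(|A|/|D|)^s$ for dyadic $A\subset D$, and the weak convergence $\mu_n\to\lambda$ is used only to guarantee that the normalising integral $\int_D K^{-1}\,\mathrm{d}\mu_n$ is eventually bounded below by $c\,|D|^s$. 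This yields, for each dyadic $D$ and all large $n$, a lower bound on the \emph{net content} $\mathscr{M}^{s}_\infty(E_n\cap D)\ge c\,|D|^s$---not a measure on the limsup.

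Second, your sentence ``produce a measure supported in $B\cap\limsup_n E_n$'' is the wrong target: one never constructs a measure on the limsup directly. What one proves is the content bound $\mathscr{M}^{s}_\infty(F_k\cap D)\ge c\,|D|^s$ for every $k$ and every dyadic $D$ (equivalently, $F_k\in\mathscr{G}^s$). The passage from this to $\limsup_n E_n=\bigcap_k F_k\in\mathscr{G}^s$ is the substantive step you leave as a black box. It is \emph{not} a weak-limit argument; it is a compactness/nesting argument (Lemma~\ref{lem:intersection} in the paper): given open $U$ and $\varepsilon>0$, one builds open sets $U_0=U\supset U_1\supset\cdots$ with $\overline{U_k}\subset F_k\cap U_{k-1}$ and $\mathscr{M}^{s}_\infty(U_k)>\mathscr{M}^{s}_\infty(U)-\varepsilon$, using the increasing-sets lemma (Lemma~\ref{lem:increasingsets}) to shrink each $F_k\cap U_{k-1}$ slightly without losing content. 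Any dyadic cover of $\bigcap_k F_k\cap U$ then covers the compact set $\bigcap_k\overline{U_k}$, hence covers some $U_k$, and the content bound follows. This nesting is what replaces your failed weak-limit step, and it is the idea your proposal is missing.
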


The proof of Lemma~\ref{lem:perssonreeve} resembles very much the
proof of Lemma~\ref{lem:classical}. The philosophy behind this lemma
is that what is important for the Hausdorff dimension of $\limsup_n E_n$
is the asymptotic distribution of $E_n$, which is described by the
weak limit of $\mu_n$, and the sizes of the sets $E_n$, which is
described by $\iint |x-y|^{-s} \, \mathrm{d}\mu_n (x) \mathrm{d}\mu_n
(y)$.

Using the method from \cite{persson} and Lemma~\ref{lem:perssonreeve},
M.~Rams and myself studied the Hausdorff dimension of sets of the form
$E(x,r_n)$ for $\mu$-almost every $x$ in the case $T$ is a map of the
interval which preserves a measure $\mu$ which is absolutely
continuous with respect to Lebesgue measure, and which has summable
decay of correlations for function of bounded variation. (There was
also some result in the case that $\mu$ is not absolutely continuous
with respect to Lebesgue measure.)

A weakness of Lemma~\ref{lem:perssonreeve} is that it requires the
measures $\mu_n$ to converge weakly to Lebesgue measure (or at least
to something which has a nice density with respect to Lebesgue
measure). In this paper we will extend Lemma~\ref{lem:perssonreeve} to
a more general lemma (Lemma~\ref{lem:frostman}) in which the measures
$\mu_n$ may converge to any non-atomic probability measure $\mu$. The
proof of this lemma still resembles that of Lemma~\ref{lem:classical},
but instead of working with the potential $\int |x-y|^{-s} \,
\mathrm{d}\mu (y)$ which has a homogeneous kernel, we will work with
``inhomogeneous'' potentials with an inhomogeneous kernel adopted to
the measure $\mu$.

The conclusion of Lemma~\ref{lem:perssonreeve} as stated in
\cite{perssonreeve} is stronger than the version given above. The
conclusion is that the set $\limsup_n E_n$ belongs to a certain class
of $G_\delta$-set, denoted by $\mathscr{G}^s$. This class was
introduced by Falconer \cite{falconer1, falconer2} and it has the
property that every set belonging to $\mathscr{G}^s$ has Hausdorff
dimension at least $s$ and the class $\mathscr{G}^s$ is closed under
countable intersections. We will generalise Falconer's class
$\mathscr{G}^s$ to more general classes $\mathscr{G}_\mu^\theta$,
where $\mu$ is a non-atomic locally finite measure and $\theta \in
(0,1]$. This is done in Section~\ref{sec:classes}. From the
  definition it will be apparent that $\mathscr{G}_\mu^\theta =
  \mathscr{G}^s$ if $\mu$ is Lebesgue measure and $s = \theta d$.

In Section~\ref{sec:classes} we will give theorems that states
that the classes $\mathscr{G}_\mu^\theta$ are closed under countable
intersections (Theorem~\ref{the:intersection}), and that any set in
$\mathscr{G}_\mu^\theta$ has a Hausdorff dimension which is at least a
certain number which depends only on $\theta$ and $\mu$
(Theorem~\ref{the:dimension}). In all of our applications this leads
to $\theta \udimh \mu$ as a lower bound on the dimension, see
Remark~\ref{rem:dimension}.

In Section~\ref{sec:inhomogeneous} we will introduce the above
mentioned inhomogeneous potentials and relate them to the classes
$\mathscr{G}_\mu^\theta$. The main result on these potentials is
Lemma~\ref{lem:frostman}, which is a generalisation of
Lemma~\ref{lem:perssonreeve} and which gives conditions that implies
that $\limsup_n E_n$ belongs to the class
$\mathscr{G}_\mu^\theta$. Together with Theorem~\ref{the:dimension},
it gives us a tool to estimate the Hausdorff dimension of $\limsup_n
E_n$ from below.

In Section~\ref{sec:applications}, we will see applications of
Lemma~\ref{lem:frostman} to estimates on Hausdorff dimension of some
random limsup-sets (Section~\ref{sec:random}), improving some
previously known results, and sets of the form $E(x,r_n)$
(Sections~\ref{sec:shrinking}--\ref{sec:piecewise}). For instance, we
prove (Corollary~\ref{cor:quadratic}) that for $\alpha \geq 1$,
\[
\dimh \{ \, y : | T_a^n (x) - y| < n^{-\alpha} \text{ infinitely
  often} \, \} = \frac{1}{\alpha},
\]
for almost every $x \in [1-a, 1]$, where $T_a (x) = 1 - a x^2$ is a
quadratic map and $a$ belongs to a certain set $\Delta$ of positive
Lebesgue measure, which has been described and studied by Benedicks
and Carleson \cite{benedickscarleson}. Previously, the dimension of
such sets have been calculated for the doubling map for almost all $x$
with respect to a Gibbs measure by Fan, Schmeling and Troubetzkoy
\cite{fanschmelingtroubetzkoy}, a result which has been extended to
piecewise expanding Markov maps by Liao and Seuret
\cite{liaoseuret}. Some results were also obtain for piecewise
expanding maps without a Markov structure by Persson and Rams
\cite{perssonrams}.

\section{Definitions and main results} \label{sec:mainresults}

\subsection{The classes $\boldsymbol{\mathscr{G}_\mu^\theta}$} \label{sec:classes}

Suppose that $\mu$ is a non-atomic and locally finite Borel measure on
$\mathbb{R}^d$. Take a point $P = (p_1, p_2, \ldots, p_d)$ and let
$\mathscr{D}_n$ be the collection of cubes of the form
\[
D = \biggl[ p_1 + \frac{n_1}{2^n}, p_1 + \frac{n_1 + 1}{2^n} \biggr)
  \times \cdots \times \biggl[ p_d + \frac{n_d}{2^n}, p_d + \frac{n_d
      + 1}{2^n} \biggr),
\]
where $n_1, n_2, \ldots, n_d$ are integers, and put $\mathscr{D} =
\bigcup_{n \in \mathbb{Z}} \mathscr{D}_n$. We will refer to the
elements of $\mathscr{D}$ by the name {\em dyadic cubes}. We let $D_n
(x)$ denote the unique dyadic cube in $\mathscr{D}_n$ containing $x$
and we let $B(x,r)$ denote the open ball with centre $x$ and radius
$r$.

Let
\[
R_n = \bigcup_{D \in \mathscr{D}_n} \partial D, \qquad \text{and} \qquad
R = \bigcup_{n \in \mathbb{Z}} R_n
\]
be the boundaries of the dyadic cubes in $\mathscr{D}_n$ and
$\mathscr{D}$ respectively. We will often assume that $\mu (R) =
0$. Since $\mu$ is assumed to be locally finite, it is possible to
choose the point $P$ defining the dyadic cubes in such a way that $\mu
(R) = 0$.

We define for $0 < \theta \leq 1$, the set functions
\[
\mathscr{M}_\mu^\theta (E) = \inf \biggl\{\, \sum_k \mu (D_k)^\theta :
D_k \in \mathscr{D},\ E \subset \bigcup_k D_k \,\biggr\}.
\]
Since $0 < \theta \leq 1$, we have by the additivity of the measure
$\mu$ that $\mathscr{M}_\mu^\theta (D) = \mu (D)^\theta$ for any $D
\in \mathscr{D}$. This is an important property that will be used
several times in this paper.

The set function $\mathscr{M}_\mu^\theta$ is not a measure (unless
$\theta = 1$). If in the definition we consider only covers by $D_k$
which belong to some $\mathscr{D}_n$ for $n > n_0$ and let $n_0 \to
\infty$, we obtain in the limit a measure. It is interesting to notice
that this construction is a special case of the measure
$\mathscr{H}_\mu^{q,t}$ that was introducted by Olsen \cite{olsen} as
a tool in multifractal analysis. This connection with multifractal
analysis will not be used in this paper, but it would be interesting
to explore it further.

Associated to the set functions $\mathscr{M}_\mu^\theta$ are classes
of sets $\mathscr{G}_\mu^\theta$, which we define as follows.

\begin{definition} \label{def:class}
  Let $0 < \theta \leq 1$. A set $E \subset \mathbb{R}^d$ belongs to
  $\mathscr{G}_\mu^\theta$ if $E$ is a $G_\delta$-set and if
  \[
  \mathscr{M}_\mu^\eta (E \cap D) = \mathscr{M}_\mu^\eta (D)
  \]
  holds for any $0 < \eta < \theta$ and $D \in \mathscr{D}$.
\end{definition}

\begin{remark}
  Since $\mathscr{M}_\mu^\eta (D) = \mu (D)^\eta$ holds when $\eta
  \leq 1$ and $D \in \mathscr{D}$, the condition $\mathscr{M}_\mu^\eta
  (E \cap D) = \mathscr{M}_\mu^\eta (D)$ is equivalent to
  $\mathscr{M}_\mu^\eta (E \cap D) = \mu (D)^\eta$.
\end{remark}

We shall investigate some of the properties of the classes
$\mathscr{G}_\mu^\theta$. Below are our main results.

\begin{theorem} \label{the:limsup}
  Let $\mu$ be a non-atomic and locally finite Borel measure, with
  $\mu (R) = 0$. Suppose $E_n$ is a sequence of open sets such that
  for any $0 < \eta < \theta$, there is a constant $c > 0$ such that
  \[
  \liminf_{k\to\infty} \mathscr{M}_\mu^\eta (E_n \cap D) \geq c
  \mathscr{M}_\mu^\eta (D)
  \]
  holds for any $D \in \mathscr{D}$. Then $\limsup_n E_n \in
  \mathscr{G}_\mu^\theta$.
\end{theorem}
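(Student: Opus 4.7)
The set $F = \limsup_n E_n$ equals $\bigcap_{k=1}^\infty U_k$ with $U_k := \bigcup_{n \geq k} E_n$ open, so $F$ is a $G_\delta$-set. My plan is to prove that each $U_k$ lies in $\mathscr{G}_\mu^\theta$ and then invoke Theorem~\ref{the:intersection} (closure of $\mathscr{G}_\mu^\theta$ under countable intersections) to conclude $F \in \mathscr{G}_\mu^\theta$. Since $U_k$ is open, hence a $G_\delta$-set, only the identity $\mathscr{M}_\mu^\eta(U_k \cap D) = \mu(D)^\eta$ for every $0 < \eta < \theta$ and every $D \in \mathscr{D}$ needs to be checked, the inequality $\leq$ being immediate from $U_k \cap D \subseteq D$.

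The hypothesis will enter only in the following qualitative form: fix any $\eta_0 < \theta$; then for any $D' \in \mathscr{D}$ with $\mu(D') > 0$, the bound $\liminf_n \mathscr{M}_\mu^{\eta_0}(E_n \cap D') \geq c\,\mu(D')^{\eta_0} > 0$ forces $E_n \cap D'$ to be non-empty for all sufficiently large $n$, so $U_k \cap D' \neq \emptyset$ for every $k$. The problem then reduces to the following self-contained lemma: \emph{if $A \subseteq \mathbb{R}^d$ is open and $A \cap D' \neq \emptyset$ for every $D' \in \mathscr{D}$ with $D' \subseteq D$ and $\mu(D') > 0$, then $\mathscr{M}_\mu^\eta(A \cap D) \geq \mu(D)^\eta$ for every $0 < \eta \leq 1$.}

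To establish the lemma I take any dyadic cover $\{C_j\}$ of $A \cap D$ and, exploiting the tree structure of $\mathscr{D}$, refine it to a family of pairwise disjoint dyadic sub-cubes of $D$. The open ``gap'' $V^{*} := D^{\circ} \setminus \bigcup_j \overline{C_j}$ is disjoint from $A$. Were $\mu(V^{*}) > 0$, then since $\mu(R) = 0$ one could choose a point $x \in V^{*} \cap \supp \mu$ lying on no dyadic boundary; for all sufficiently large $m$ one has $D_m(x) \subseteq V^{*}$, while $\mu(D_m(x)) > 0$ because $x \in \supp \mu$ sits in the interior of $D_m(x)$, contradicting the hypothesis on $A$. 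Hence $\mu(V^{*}) = 0$; combined with $\mu(R) = 0$ this gives $\mu(D \setminus \bigsqcup_j C_j) = 0$, so $\sum_j \mu(C_j) = \mu(D)$, and the subadditivity $\sum_j a_j^\eta \geq (\sum_j a_j)^\eta$ valid for $\eta \leq 1$ yields $\sum_j \mu(C_j)^\eta \geq \mu(D)^\eta$.

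The main difficulty I anticipate is the extraction of a positive-$\mu$-mass dyadic sub-cube inside the open gap $V^{*}$: this is where the three assumptions that $\mu$ is locally finite, non-atomic and satisfies $\mu(R) = 0$ jointly come into play, and it is also why I must pass through the open sets $U_k$ rather than attack $F$ directly—for a general $G_\delta$-set the analogous gap need not be open and the argument would break down.
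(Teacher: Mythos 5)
There is a genuine gap, and it sits exactly where you anticipated difficulty. First, a technical failure: for an infinite cover $\{C_j\}$ the set $V^{*}=D^{\circ}\setminus\bigcup_j\overline{C_j}$ is the complement of an $F_\sigma$ set and need not be open, so from $x\in V^{*}\setminus R$ you cannot conclude that $D_m(x)\subseteq V^{*}$ for large $m$; the extraction of a positive-measure dyadic cube inside the gap collapses. Second, and fatally, the ``self-contained lemma'' you reduce everything to is false, so no repair of that step is possible. Take $d=1$, $\mu$ Lebesgue measure, $D=[0,1)$, and $A=\bigcup_k(q_k-\epsilon_k,q_k+\epsilon_k)$ with $\{q_k\}$ dense in $(0,1)$ and $\epsilon_k=10^{-10}4^{-k}$. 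Then $A$ is open and dense, hence meets every dyadic subinterval of $D$ (all of which have positive measure); yet each interval $(q_k-\epsilon_k,q_k+\epsilon_k)$ is covered by two dyadic intervals of length less than $4\epsilon_k$, so $\mathscr{M}_\mu^{1/2}(A\cap D)\le \sum_k 2(4\epsilon_k)^{1/2}\le 8\cdot 10^{-5}<1=\mu(D)^{1/2}$. An open set meeting every positive-measure dyadic subcube can thus have arbitrarily small $\mathscr{M}_\mu^{\eta}$-content; in this example the gap $V^{*}$ has measure close to $1$ but contains no dyadic cube, which is exactly the scenario your argument rules out only under the unwarranted assumption that $V^{*}$ is open.

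The root cause is that you replaced the quantitative hypothesis $\liminf_n\mathscr{M}_\mu^{\eta}(E_n\cap D)\ge c\,\mathscr{M}_\mu^{\eta}(D)$ by the purely qualitative statement $E_n\cap D'\neq\emptyset$, which discards the information the theorem actually needs. The paper keeps it: Lemma~\ref{lem:removec} shows that a bound $\mathscr{M}_\mu^{\eta'}(F\cap D)\ge c\,\mathscr{M}_\mu^{\eta'}(D)$ for all dyadic $D$ self-improves to the equality $\mathscr{M}_\mu^{\eta}(F\cap D)=\mathscr{M}_\mu^{\eta}(D)$ for every $\eta<\eta'$ --- the constant $c$ is removed by trading a drop in the exponent against $c$ on sufficiently deep cubes. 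Applying this to $F_k=\bigcup_{n\ge k}E_n$ (your $U_k$) and then invoking Lemmas~\ref{lem:open} and~\ref{lem:intersection} (equivalently Theorem~\ref{the:intersection}, whose proof does not rely on Theorem~\ref{the:limsup}, so your outer reduction is sound and matches the paper's) completes the argument. What is missing from your proposal is precisely the constant-removal mechanism of Lemma~\ref{lem:removec}; it cannot be replaced by a density argument.
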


\begin{theorem} \label{the:intersection}
  Let $\mu$ be a non-atomic and locally finite Borel measure, with
  $\mu (R) = 0$. Suppose that $E_n$ is a sequence of sets in
  $\mathscr{G}_\mu^\theta$. Then $\bigcap_n E_n \in
  \mathscr{G}_\mu^\theta$.
\end{theorem}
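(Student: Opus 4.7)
The plan is to reduce first to the case of open sets and then run a compactness / Baire-category argument; closing the loop requires an auxiliary finite-intersection statement. The fact that $\bigcap_n E_n$ is $G_\delta$ is immediate, and by monotonicity only the inequality $\mathscr{M}_\mu^\eta(\bigcap_n E_n \cap D) \geq \mu(D)^\eta$ requires proof, for every $\eta \in (0,\theta)$ and $D \in \mathscr{D}$. Writing each $E_n = \bigcap_m V_{n,m}$ with $V_{n,m}$ open, monotonicity of $\mathscr{M}_\mu^\eta$ together with the trivial upper bound $\mathscr{M}_\mu^\eta(V_{n,m} \cap D) \leq \mu(D)^\eta$ shows that each $V_{n,m}$ itself belongs to $\mathscr{G}_\mu^\theta$. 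After relabelling, it therefore suffices to show: for any sequence $(U_k)_{k=1}^\infty$ of open sets in $\mathscr{G}_\mu^\theta$, one has $\bigcap_k U_k \in \mathscr{G}_\mu^\theta$.

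Assume for contradiction that a dyadic cover $(D_j)$ of $(\bigcap_k U_k) \cap D$ satisfies $\sum_j \mu(D_j)^\eta < \mu(D)^\eta$, with $D_j \subset D$ after standard refinements. Passing to the open interiors $D_j^\circ$ does not affect the $\mu^\eta$-sum (since $\mu(R) = 0$), and the set $\bar D \setminus \bigcup_j D_j^\circ$ is then a compact subset of $\bar D$ which is contained, modulo a subset of $R$, in $\bigcup_k (\bar D \cap U_k^c)$. Setting $W_K := U_1 \cap \cdots \cap U_K$ and $R_K := \bigcup_{D' \in \mathscr{D}_K} \partial D'$, the closed sets $\bar D \cap (W_K^c \cup R_K)$ form an increasing chain whose union covers this compact set; by compactness there is an index $K$ with $W_K \cap D \cap R_K^c \subseteq \bigcup_j D_j$. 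Extending $(D_j)$ by a cheap dyadic cover of $R_K \cap D$, which one expects to be possible from $\mu(R_K) = 0$ together with local finiteness of $\mu$, yields a dyadic cover of $W_K \cap D$ whose $\mu^\eta$-sum remains strictly below $\mu(D)^\eta$, contradicting $W_K \in \mathscr{G}_\mu^\theta$.

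The hardest step is therefore the auxiliary claim that $W_K$, the finite intersection of open sets in $\mathscr{G}_\mu^\theta$, itself lies in $\mathscr{G}_\mu^\theta$. I would try this by induction on the number of factors: given $U, V$ open in $\mathscr{G}_\mu^\theta$, exploit the canonical decomposition $V = \bigsqcup_i V_i$ into disjoint maximal dyadic cubes; for each $V_i \subset D$, use $\mathscr{M}_\mu^\eta(U \cap V_i) = \mu(V_i)^\eta$ from $U \in \mathscr{G}_\mu^\theta$, and combine this with the inequality $\sum_i \mu(V_i)^\eta \geq \mu(D)^\eta$ coming from $V \in \mathscr{G}_\mu^\theta$ in order to bound covers of $U \cap V \cap D$ from below cube by cube. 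A secondary technical issue is rigorously controlling the boundary correction involving $R_K \cap D$: although $\mu(R_K) = 0$, the quantity $\mathscr{M}_\mu^\eta(R_K \cap D)$ need not vanish for $\eta < 1$ in general, and additional care is required, possibly by choosing the dyadic scale at which the compactness argument is run to be sufficiently fine.
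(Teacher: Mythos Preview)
Your reduction to open sets and the inductive argument for the finite-intersection claim are correct; the latter is essentially Lemma~\ref{lem:open} of the paper (decompose $V\cap D$ into maximal dyadic cubes $V_i$ and compare cube by cube), so that step goes through.

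The genuine gap is the boundary correction. Your plan is to extend the cover $(D_j)$ by a ``cheap dyadic cover of $R_K\cap D$'', and you note yourself that $\mathscr{M}_\mu^\eta(R_K\cap D)$ need not be small. In fact it can be as large as $\mu(D)^\eta$: take $\mu$ to be Lebesgue measure on $\mathbb{R}^2$, $D=[0,1)^2$ and $\eta<1/2$. A single grid segment $S=\{1/2\}\times[0,1)\subset R_1\cap D$ covered at level $n$ costs $2^n\cdot 2^{-2n\eta}=2^{n(1-2\eta)}\to\infty$, so the infimum is attained by the trivial cover $\{D\}$ and $\mathscr{M}_\mu^\eta(R_1\cap D)=\mu(D)^\eta$. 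No choice of dyadic scale fixes this; the obstruction is not that $R_K$ has too many pieces but that each face already has full $\mathscr{M}_\mu^\eta$-content. So you cannot get the contradiction by adding a cover of $R_K$.

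What is actually needed is the opposite statement: removing $R_K$ from an \emph{open} set does not decrease its $\mathscr{M}_\mu^\eta$-content. This is the content of the paper's Lemma~\ref{lem:opensets}, and its proof is not immediate (one has to rebuild a cover of $U$ from a cover of $U\setminus R_n$ by an inductive coarsening). With that lemma in hand your route can be completed: from $W_K\cap D\setminus R_K\subset\bigcup_j D_j$ and $W_K\cap D^\circ$ open one gets
\[
\textstyle\sum_j\mu(D_j)^\eta\;\ge\;\mathscr{M}_\mu^\eta\bigl((W_K\cap D^\circ)\setminus R_K\bigr)\;=\;\mathscr{M}_\mu^\eta(W_K\cap D^\circ)\;=\;\mathscr{M}_\mu^\eta(D^\circ)\;=\;\mu(D)^\eta,
\]
the middle equality from Lemma~\ref{lem:open} and the last from $\mu(\partial D)=0$. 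The paper itself takes a different global route: rather than reducing countable to finite by compactness, Lemma~\ref{lem:intersection} builds a nested sequence of compacta inside $\bigcap_n E_n$ by shrinking each $E_n\cap U_{n-1}$ slightly (using the increasing sets lemma to control the loss), and then a cover of the intersection already covers some $\overline{U}_n$. That strategy bypasses the need for a separate finite-intersection step, but it still uses Lemma~\ref{lem:opensets} at each stage to discard the grids $R_n$. Either way, that lemma is the missing ingredient.
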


The following theorem makes use of the so called \emph{upper coarse
  multifractal spectrum} of the measure $\mu$, which we denote by
$G_\mu$. See Section~\ref{sec:furtherdefinitions} for a definition.

\begin{theorem} \label{the:dimension}
  Let $\mu$ be a non-atomic and locally finite Borel measure, with
  $\mu (R) = 0$. Suppose that $E$ belongs to $\mathscr{G}_\mu^\theta$
  for some $0 < \theta \leq 1$ and that there is a $t > 0$ and an
  $n_0$ such that $\mu (D) \leq 2^{-tn}$ for all $D \in \mathscr{D}_n$
  with $n \geq n_0$. Then $\dimh (E \cap D) \geq \theta t$ and
  \[
  \dimh (E \cap D) \geq \theta \sup \{\, s_0 : \exists \varepsilon >
  0, \forall s\in [t,s_0], (\theta s - G_\mu (s) > \varepsilon) \,\},
  \]
  for any $D \in \mathscr{D}$ with $\mu (D) > 0$.
\end{theorem}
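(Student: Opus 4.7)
Both bounds are instances of the same Frostman-type mechanism applied to $\mathscr{M}_\mu^\eta$ in place of Hausdorff content. Since $E \in \mathscr{G}_\mu^\theta$, for every $\eta < \theta$ and every $D \in \mathscr{D}$ with $\mu(D) > 0$ we have $\mathscr{M}_\mu^\eta(E \cap D) = \mu(D)^\eta$, so every cover of $E \cap D$ by dyadic cubes $\{D_k\} \subset \mathscr{D}$ satisfies $\sum_k \mu(D_k)^\eta \geq \mu(D)^\eta$. The task is to convert this into a lower bound on $\sum_k |D_k|^{\eta s_0}$, and then let $\eta \nearrow \theta$. Reducing from a cover by arbitrary sets of diameter $\leq \delta$ to a dyadic cover at levels $n$ with $2^{-n} \sim \delta$ costs only a $d$-dependent multiplicative constant, and is harmless for Hausdorff dimension. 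For the first inequality, the hypothesis $\mu(D_k) \leq 2^{-t n_k}$ together with $|D_k| \sim 2^{-n_k}$ gives $\mu(D_k)^\eta \leq C_d |D_k|^{\eta t}$ pointwise, so $\sum_k |D_k|^{\eta t} \geq C_d^{-1} \mu(D)^\eta > 0$, yielding $\mathscr{H}^{\eta t}(E \cap D) > 0$ for every $\eta < \theta$, and hence $\dimh(E \cap D) \geq \theta t$.

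For the second inequality, fix $s_0$ admissible in the supremum, with $\varepsilon > 0$ such that $\theta s - G_\mu(s) > \varepsilon$ on $[t, s_0]$, and pick $\eta < \theta$ close enough that still $\eta s - G_\mu(s) > \varepsilon/2$ uniformly on $[t, s_0]$. For a dyadic cover $\{D_k\}$ of $E \cap D$ at levels $n_k \geq N$, separate the \emph{good} cubes with $\mu(D_k) \leq 2^{-s_0 n_k}$, for which $\mu(D_k)^\eta \leq C_d |D_k|^{\eta s_0}$ directly, from the \emph{bad} cubes, which satisfy $\mu(D_k) = 2^{-s n_k}$ with some $s \in [t, s_0)$. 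Partition $[t, s_0]$ into $O(1/\delta)$ subintervals of length $\delta$ and, for each subinterval centred at $s$, use the definition of the upper coarse multifractal spectrum to bound, for $n$ large, the total number of dyadic cubes in $\mathscr{D}_n$ with $\mu(D) \in [2^{-(s+\delta)n}, 2^{-sn}]$ by $2^{(G_\mu(s)+\delta)n}$. Their total $\mu^\eta$-content at level $n$ is then at most $2^{(G_\mu(s) - \eta s + O(\delta))n} \leq 2^{-\varepsilon n/4}$ for $\delta$ small enough. Summing the resulting geometric series over $n \geq N$ and over the finitely many subintervals yields a bound of the form $C_\delta 2^{-\varepsilon N/4}$ on the bad $\mu^\eta$-content of the cover, which can be made smaller than $\tfrac{1}{2}\mu(D)^\eta$ by choosing $N$ large. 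The good cubes then carry at least half of the $\mu^\eta$-mass, so
\[
\sum_k |D_k|^{\eta s_0} \;\geq\; \sum_{\text{good } k} |D_k|^{\eta s_0} \;\geq\; C_d^{-1} \sum_{\text{good } k} \mu(D_k)^\eta \;\geq\; \tfrac{1}{2} C_d^{-1} \mu(D)^\eta > 0.
\]
This gives $\mathscr{H}^{\eta s_0}(E \cap D) > 0$, and letting $\eta \nearrow \theta$ yields $\dimh(E \cap D) \geq \theta s_0$; taking the supremum over admissible $s_0$ finishes the proof.

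The main obstacle is securing the multifractal estimate uniformly in both the scaling exponent $s$ and the level $n$. This is exactly why the hypothesis asks for $\theta s - G_\mu(s)$ to be bounded below by a strict positive $\varepsilon$ on the closed interval $[t, s_0]$, rather than merely pointwise: compactness of $[t, s_0]$ combined with the strict gap lets one discretise the $s$-axis into a finite number of bins and still keep the exponent in the coarse-multifractal counting estimate strictly negative, after absorbing both the $O(\delta)$ loss from discretisation and the slack from dropping to $\eta < \theta$. Everything else — replacing arbitrary covers by dyadic covers, passing from the dyadic Hausdorff-type content $\mathscr{M}_\mu^\eta$ to ordinary Hausdorff content, and letting $\eta \to \theta$ at the end — is standard Frostman bookkeeping.
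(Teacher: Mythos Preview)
Your proof plan is correct and matches the paper's argument closely: both split a fine dyadic cover of $E\cap D$ into ``good'' cubes (those with $\mu$-mass at most $2^{-s_0 n}$ at their scale) and ``bad'' cubes, control the total $\mu^\eta$-mass of the bad cubes via the coarse multifractal counting estimate after a compactness/discretisation step on $[t,s_0]$, and conclude that the good cubes carry at least half of $\mathscr{M}_\mu^\eta(E\cap D)$, which forces $\mathscr{N}^{\eta s_0}(E\cap D)>0$. The only cosmetic difference is that the paper uses bins of \emph{variable} width $\delta(s_k)$ extracted by compactness rather than a uniform $\delta$, which is what actually justifies the bound $N(s+\delta,r)-N(s-\delta,r)\leq r^{-(G_\mu(s)+\varepsilon)}$ from the definition of $G_\mu$ --- but you flag exactly this compactness issue in your closing paragraph.
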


The proofs of the three theorems above are given in
Section~\ref{sec:propertiesproofs}.  These theorems give us the main
properties of the classes $\mathscr{G}_\mu^\theta$. In principle, one
can use Theorem~\ref{the:limsup} to determine if a limsup-set belongs
to the class $\mathscr{G}_\mu^\theta$, but this is not always
convenient in practice. In the section below, we therefore define some
inhomogeneous potentials and use them to give an alternative method to
determine is a limsup-set belongs to $\mathscr{G}_\mu^\theta$.

\subsection{Inhomogeneous potentials and energies} \label{sec:inhomogeneous}

We define the function $Q \colon \mathbb{R}^d \times \mathbb{R}^d \to
\{\emptyset \} \cup \mathscr{D} \cup \{\mathbb{R}^d\}$ by
\[
Q (x,y) = D, \qquad x \neq y,
\]
where $D \in \mathscr{D} \cup \{ \mathbb{R}^d \}$ is chosen such that
$x,y \in D$ and $D$ is minimal in sense of inclusion. If $x = y$, we
let $Q (x,y) = \emptyset$. Note that it is necessary to include the
possibility that $Q (x,y) = \mathbb{R}^d$, since otherwise $Q$ would
not always be defined; there are points $x$ and $y$ such that $\{x,
y\}$ is a subset of no $D \in \mathscr{D}$.

We use $Q$ to define some inhomogeneous potentials and energies.

\begin{definition}
  Let $\mu$ and $\nu$ be two Borel measures on $\mathbb{R}^d$ and $0
  < \theta \leq 1$. The $(\mu, \theta)$-potential of $\nu$ is the
  function $U_\mu^\theta \nu \colon \mathbb{R}^d \to \mathbb{R} \cup
  \{\infty\}$ defined by
  \[
  U_\mu^\theta \nu (x) = \int \mu (Q(x,y))^{-\theta} \, \mathrm{d} \nu
  (y),
  \]
  and the $(\mu,\theta)$-energy of $\nu$ is the quantity
  \[
  I_\mu^\theta (\nu) = \iint \mu (Q(x,y))^{-\theta} \, \mathrm{d} \nu
  (y) \mathrm{d} \nu (x) = \int U_\mu^\theta \nu \, \mathrm{d} \nu.
  \]
\end{definition}

The reason for introducing these potentials and energies is the lemma
below, which relates the classes $\mathscr{G}_\mu^\theta$ with the
energies $I_\mu^\theta$. This lemma will be our main tool when we in
various applications determine that limsup-sets belong to a class
$\mathscr{G}_\mu^\theta$.  (We call the result a lemma and not a
theorem because of its connection to what is often called Frostman's
lemma. Compare also with the related Lemmata~\ref{lem:classical} and
\ref{lem:perssonreeve}.)

\begin{lemma} \label{lem:frostman}
  Let $\mu$ be a non-atomic and locally finite Borel measure, with
  $\mu (R) = 0$. Let $E_n$ be a sequence of open sets. If $\mu_n$ are
  non-atomic Borel measures, with $\supp \mu_n \subset E_n$, that
  converge weakly to a measure $\mu$, and if for every $\eta <
  \theta$ there is a constant $c_\eta$ such that
  \[
  I_\mu^\eta (\mu_n) = \iint \mu (Q(x,y))^{-\eta} \, \mathrm{d} \mu_n
  (x) \mathrm{d} \mu_n (y) \leq c_\eta
  \]
  for all $n$, then $\limsup_n E_n \in \mathscr{G}_\mu^\theta$.
\end{lemma}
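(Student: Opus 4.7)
The plan is to verify the hypothesis of Theorem~\ref{the:limsup}: for every $0 < \eta < \theta$ and every $D \in \mathscr{D}$ (which we may assume to satisfy $\mu(D) > 0$), I would produce a constant $c > 0$, possibly depending on $D$, with
$$
\liminf_{n\to\infty} \mathscr{M}_\mu^\eta(E_n \cap D) \geq c\,\mu(D)^\eta.
$$
Following the template of the proof of Lemma~\ref{lem:classical}, fix $\eta$ and $D$ and introduce, for each $n$, the auxiliary Borel measure
$$
\nu_n(A) = \int_A \bigl(U_\mu^\eta \mu_n(x)\bigr)^{-1}\,\mathrm{d}\mu_n(x).
$$
The energy bound $I_\mu^\eta(\mu_n) \leq c_\eta$ forces $U_\mu^\eta\mu_n$ to be $\mu_n$-a.e.\ finite, so $\nu_n$ is a nontrivial finite Borel measure, and by construction $\supp\nu_n \subset \supp\mu_n \subset E_n$.

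The Jensen computation in Lemma~\ref{lem:classical} adapts essentially verbatim. For any $D' \in \mathscr{D}$, applying Jensen's inequality to the convex function $t\mapsto t^{-1}$ and using the crucial geometric fact that $x,y\in D'$ forces $Q(x,y) \subseteq D'$ and hence $\mu(Q(x,y)) \leq \mu(D')$, one obtains
$$
\nu_n(D') \leq \mu(D')^\eta,
$$
which plays the role of the classical bound $\nu(U)\leq |U|^s$. For a lower bound on $\nu_n(D)$, the Cauchy--Schwarz inequality applied to $\sqrt{U_\mu^\eta\mu_n}$ and $1/\sqrt{U_\mu^\eta\mu_n}$ integrated against $\mu_n$ restricted to $D$ gives $\mu_n(D)^2 \leq I_\mu^\eta(\mu_n)\,\nu_n(D) \leq c_\eta\,\nu_n(D)$. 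Since $\partial D \subset R$ and $\mu(R) = 0$, weak convergence $\mu_n\to\mu$ yields $\mu_n(D) \to \mu(D)$, so $\liminf_n \nu_n(D) \geq \mu(D)^2/c_\eta$.

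To pass from $\nu_n$ to $\mathscr{M}_\mu^\eta$, let $\{D_k\}$ be any dyadic cover of $E_n \cap D$. Because two dyadic cubes are either nested or disjoint, either some $D_k$ contains $D$, in which case the single term already yields $\sum_k \mu(D_k)^\eta \geq \mu(D)^\eta$, or else we may discard the cubes disjoint from $D$ and assume $D_k \subseteq D$ for all $k$; then the Jensen bound combined with $\supp\nu_n\subset E_n$ gives
$$
\sum_k \mu(D_k)^\eta \geq \sum_k \nu_n(D_k) \geq \nu_n\Bigl(\bigcup_k D_k\Bigr) \geq \nu_n(E_n\cap D) = \nu_n(D).
$$
Taking the infimum over covers and then the $\liminf$ in $n$ produces $\liminf_n \mathscr{M}_\mu^\eta(E_n\cap D) \geq \min\bigl(\mu(D)^\eta,\,\mu(D)^2/c_\eta\bigr)$, which is a positive multiple of $\mu(D)^\eta$, and Theorem~\ref{the:limsup} then delivers $\limsup_n E_n \in \mathscr{G}_\mu^\theta$.

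The chief obstacle is the lower bound on $\nu_n(D)$: Cauchy--Schwarz produces only a factor $\mu(D)^{2-\eta}/c_\eta$ in front of $\mu(D)^\eta$, which is not uniform in $D$, so one must interpret the constant $c$ in Theorem~\ref{the:limsup} as allowed to depend on $D$ (equivalently, apply the theorem separately on each dyadic cube) or refine the estimate. A secondary technical matter is the behaviour of $\mu_n$ near $\partial D$, which is painlessly controlled by $\mu(R) = 0$ together with the weak convergence, and the non-atomicity of $\mu_n$ ensures that the apparent singularity $Q(x,x) = \emptyset$ sits on a $\mu_n\otimes\mu_n$-null set and so causes no trouble in defining $\nu_n$ or in evaluating $I_\mu^\eta(\mu_n)$.
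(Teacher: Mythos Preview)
Your Jensen estimate $\nu_n(D')\le\mu(D')^\eta$ is correct and is exactly the mechanism the paper uses. The genuine gap is the lower bound. Cauchy--Schwarz gives only
\[
\nu_n(D)\ \ge\ \frac{\mu_n(D)^2}{c_\eta}\ \longrightarrow\ \frac{\mu(D)^{2-\eta}}{c_\eta}\,\mu(D)^\eta,
\]
and the prefactor $\mu(D)^{2-\eta}/c_\eta$ tends to $0$ along shrinking cubes. Theorem~\ref{the:limsup} genuinely requires $c$ to be uniform in $D$: its proof passes through Lemma~\ref{lem:removec}, where one fixes a cube $D$, decomposes it into many sub-cubes $Q_k\in\mathscr{D}_m$, and invokes the hypothesis $\mathscr{M}_\mu^\eta(E\cap Q_k)\ge c\,\mu(Q_k)^\eta$ on \emph{each} $Q_k$ with the \emph{same} $c$. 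If $c$ is allowed to depend on the cube, the argument collapses precisely because the sub-cubes $Q_k$ are small. So neither of your proposed repairs (``let $c$ depend on $D$'' or ``apply the theorem separately on each cube'') is available.

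What the paper does instead is replace Cauchy--Schwarz by a second use of Jensen together with two auxiliary facts. First (Lemma~\ref{lem:energyestimate}), the kernel is tailored to $\mu$ so that the \emph{limit} measure satisfies
\[
\iint_{D\times D}\mu(Q(x,y))^{-\eta}\,\mathrm{d}\mu\,\mathrm{d}\mu\ \le\ \frac{\mu(D)^{2-\eta}}{1-\eta},
\]
a bound you cannot expect for the approximants $\mu_n$. Second (Corollary~\ref{cor:energyconvergence}), the uniform energy bound at some exponent strictly larger than $\eta$ forces the localised $\eta$-energies of $\mu_n$ to converge to those of $\mu$; this is where the hypothesis ``for every $\eta<\theta$'' is actually spent. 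Combining these with Jensen on the outer integral yields
\[
\liminf_{n\to\infty}\int_D\Bigl(\int_D\mu(Q(x,y))^{-\eta}\,\mathrm{d}\mu_n(y)\Bigr)^{-1}\mathrm{d}\mu_n(x)\ \ge\ (1-\eta)\,\mu(D)^\eta,
\]
and now the constant $(1-\eta)^{-1}$ is uniform in $D$, so Lemma~\ref{lem:frostman2} (and then Theorem~\ref{the:limsup}) applies. The missing idea in your sketch is precisely Lemma~\ref{lem:energyestimate}: the inhomogeneous kernel is designed so that $\mu$ itself has the right localised energy scaling, and one must pass to the limit before estimating.
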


The proof of Lemma~\ref{lem:frostman} is in
Section~\ref{sec:inhomogeneousproofs}.

\subsection{Applications} \label{sec:applications}

We give below some applications of the theory presented above.

\subsubsection{Random limsup-sets} \label{sec:random}

Let $\mu$ be a Borel probability measure on $\mathbb{R}^d$. Consider
the random sequence of points $(x_n)_{n=1}^\infty$, where the points
$x_n$ are independent and distributed according to the measure
$\mu$. Let $\alpha > 0$. We are interested in the Hausdorff dimension
of the random set
\[
E_\alpha = \limsup_n B_n
\]
where $B_n = B(x_n,n^{-\alpha})$.

In Section~\ref{sec:randomproof} we will prove the following theorem.

\begin{theorem} \label{the:randomlimsup}
  Let
  \[
  s = s (\mu) = \lim_{\varepsilon \to 0^+} \sup \{\, t : G_\mu (t)
  \geq t - \varepsilon \,\}.
  \]
  Suppose that $1/\alpha \leq s$. Almost surely, $E_\alpha \in
  \mathscr{G}_\mu^\theta$ with $\theta = (\alpha s)^{-1}$ and
  \[
  \dimh E_\alpha \geq \frac{1}{\alpha} \frac{\udimh \mu}{s}.
  \]
\end{theorem}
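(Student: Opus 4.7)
The plan is to apply Lemma~\ref{lem:frostman} to a blocked version of the random covers and then invoke Theorem~\ref{the:dimension} to convert class membership into a Hausdorff dimension bound. Partition $\mathbb{N}$ into dyadic blocks $I_N = \{\, n : 2^N \leq n < 2^{N+1}\,\}$ and let $F_N = \bigcup_{n \in I_N} B_n$, so that $\limsup_N F_N = E_\alpha$. Define
\[
\mu_N = \frac{1}{|I_N|}\sum_{n \in I_N} \nu_n,
\]
where $\nu_n$ is the normalised restriction of $\mu$ to $B_n$ (almost surely well-defined since $x_n \in \supp \mu$); then $\mu_N$ is a non-atomic probability measure with $\supp \mu_N \subset F_N$. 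Weak convergence $\mu_N \to \mu$ almost surely follows from the strong law of large numbers applied to $(f(x_n))$ for continuous compactly supported $f$, together with $\int f\,\mathrm{d}\nu_n - f(x_n) \to 0$ (since $\nu_n$ is supported in a ball of radius $n^{-\alpha}$ about $x_n$).

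The main step is a uniform bound on $I_\mu^\eta(\mu_N)$ for each $\eta < \theta = (\alpha s)^{-1}$. With $T_{jk} = \iint \mu(Q(x,y))^{-\eta}\,\mathrm{d}\nu_j(x)\,\mathrm{d}\nu_k(y)$, split
\[
\E I_\mu^\eta(\mu_N) = \frac{1}{|I_N|^2}\biggl(\sum_{j \in I_N} \E T_{jj} + \sum_{\substack{j, k \in I_N \\ j \neq k}} \E T_{jk}\biggr).
\]
For the diagonal, a dyadic decomposition of $T_{jj}$ combined with the Frostman-type upper bound $\mu(D) \leq 2^{-tn}$ and the lower bound $\mu(B(x_j, j^{-\alpha})) \gtrsim j^{-\alpha s + o(1)}$ provided by the definition of $s$ via $G_\mu$ gives $\E T_{jj} \lesssim j^{\alpha s \eta + o(1)}$; the normalised diagonal contribution is then $O(2^{-N(1 - \alpha s \eta) + o(1)}) \to 0$, since $\alpha s \eta < 1$. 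For the off-diagonal, independence of $x_j$ and $x_k$ yields $\E T_{jk} = I_\mu^\eta(\tilde\mu_j, \tilde\mu_k)$ with $\tilde\mu_l = \E \nu_l$ a perturbation of $\mu$ at scale $l^{-\alpha}$, which reduces (up to a uniform constant) to $I_\mu^\eta(\mu)$; this latter quantity is finite for every $\eta < 1$ under the Frostman hypothesis of Theorem~\ref{the:dimension} by a standard dyadic estimate starting from the telescopic identity
\[
I_\mu^\eta(\mu) = \sum_{D \in \mathscr{D}}\mu(D)^{-\eta}\Bigl[\mu(D)^2 - \sum_{D' \text{ child of } D}\mu(D')^2\Bigr].
\]

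Once $\sup_N \E I_\mu^\eta(\mu_N) \leq c_\eta$ is established for every $\eta < \theta$, a diagonal subsequence extraction based on Fatou's inequality produces a random subsequence $(N_k)$ such that $\sup_k I_\mu^{\eta_m}(\mu_{N_k}) < \infty$ almost surely for each $m$ in a sequence $\eta_m \nearrow \theta$; monotonicity of $I_\mu^\eta$ in $\eta$ then yields the uniform bound for every $\eta < \theta$. Lemma~\ref{lem:frostman} applied along this subsequence gives $\limsup_k F_{N_k} \in \mathscr{G}_\mu^\theta$ almost surely, and since the defining equality $\mathscr{M}_\mu^\eta(E \cap D) = \mathscr{M}_\mu^\eta(D)$ is monotone in $E$ and $E_\alpha$ is a $G_\delta$-set containing $\limsup_k F_{N_k}$, we conclude $E_\alpha \in \mathscr{G}_\mu^\theta$. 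Theorem~\ref{the:dimension}, applied with $t$ just below $\udimh \mu$ and $s_0$ close to $\udimh \mu$ (using the definition of $s$ to ensure $\theta s' - G_\mu(s') > 0$ on the relevant interval of exponents), then gives $\dimh E_\alpha \geq \theta \udimh \mu = \alpha^{-1}\udimh \mu / s$.

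The main obstacle is the diagonal energy estimate, which requires translating the abstract definition of $s$ via $G_\mu$ into effective pointwise bounds on $\mu(B(x_j, j^{-\alpha}))$ and carefully combining these with a dyadic decomposition of $T_{jj}$ to handle also the pairs $(x, y)$ much closer than $j^{-\alpha}$ (where $Q(x,y)$ can be a much smaller cube than the one containing $B_j$). The passage from bounded expectation to a deterministic uniform bound, while technical, is handled by a standard Fatou-plus-diagonal argument, and the concluding dimension deduction is a direct application of Theorem~\ref{the:dimension}.
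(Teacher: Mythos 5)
Your high-level plan (dyadic blocking, Lemma~\ref{lem:frostman} on block measures, then a dimension theorem) is in the right spirit, but there are two genuine gaps, and they are precisely where the paper's proof does the real work.

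First, the diagonal term. You assert a pointwise bound of the form $\mu(B(x_j, j^{-\alpha})) \gtrsim j^{-\alpha s + o(1)}$ ``provided by the definition of $s$ via $G_\mu$''. This is not what the definition of $s$ gives. The coarse spectrum $G_\mu$ controls the \emph{number} of dyadic cubes at scale $r$ whose measure is approximately $r^t$, not the measure of a cube containing a $\mu$-typical point: for a generic measure there can be points $x$ in the support with $\mu(B(x,r))$ arbitrarily small compared to $r^{s+o(1)}$, and these points have positive probability of being hit by the $x_j$. The paper's construction avoids this by not working with $\mu$ at all in the energy estimate: it builds a restricted probability measure $\nu = \mu|_C / \mu(C)$, where $C$ discards the ``bad'' cubes $A_{[\beta n_k]}$ at a sparse scale sequence $(n_k)$ (Lemma~\ref{lem:t0} gives $\mu(A_n) \leq 2^{-\varepsilon n}$ eventually, which makes $\mu(C)$ close to $1$), so that every dyadic cube at the relevant scale either has $\nu$-measure zero or $\nu$-measure at least $2^{-t_0[\beta n_k]-1}$ (Lemma~\ref{lem:nu}). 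That dichotomy is exactly what yields $\int \nu(D_{[\beta n_k]}(x))^{-\theta}\,\mathrm{d}\nu(x) \leq 2^{t_0\beta n_k\theta + 1}$, which in turn is why the normalising factor $2^{-(n_k-1)}$ kills the diagonal contribution when $\theta = (\beta t_0)^{-1} \leq 1$. Without the restriction, the diagonal can genuinely be unbounded. As a smaller point, the paper also uses dyadic cubes $D_{[\beta n]}(x_k)$ rather than the balls $B_k$ for the blocks $\nu_k$, which is what makes the off-diagonal term reduce to an \emph{exact} energy equality (Lemma~\ref{lem:E1}), since $Q(x,y)$ is constant on $D_i \times D_j$ for $D_i \neq D_j$; with balls this computation does not close up.

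Second, the dimension deduction. You propose to deduce $\dimh E_\alpha \geq \theta\,\udimh\mu$ from $E_\alpha \in \mathscr{G}_\mu^\theta$ and Theorem~\ref{the:dimension} ``with $t$ just below $\udimh\mu$''. This is not available: the hypothesis of Theorem~\ref{the:dimension} is a uniform Frostman condition $\mu(D) \leq 2^{-tn}$ for all $D \in \mathscr{D}_n$, and the largest such uniform $t$ is generically far below $\udimh\mu$ (it is not even bounded below by $\ldimh\mu$ without further assumptions). As Remark~\ref{rem:dimension} explains, membership in $\mathscr{G}_\mu^\theta$ alone only yields the weaker second estimate of Theorem~\ref{the:dimension}. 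The paper instead proves the stronger statement $E_\alpha \in \mathscr{G}_\nu^\theta$ where $\nu = \mu|_C / \mu(C)$ and $C \subset \Theta_{m_1}(t_1)$ has a genuine Frostman exponent $t_1$ that can be taken arbitrarily close to $\udimh\mu$; the first (trivial) estimate of Theorem~\ref{the:dimension} then gives $\theta t_1 \to \theta\,\udimh\mu$. So the class-membership step for the restricted measure $\nu$ is not an afterthought: it is what delivers the stated dimension bound. You would need to incorporate both the $A_n$-removal for the diagonal and the $\Theta_{m_1}(t_1)$-restriction for the Frostman exponent, which is essentially the content of the paper's definition of $\hat\Theta$ and $C$.
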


Note that $s (\mu)$ is clearly finite since $G_\mu$ is a bounded
function.

The estimate in Theorem~\ref{the:randomlimsup} should be compared with
the result by Ekstr\"{o}m and Persson
\cite[Theorem~1]{ekstrompersson}, that almost surely
\[
\dimh E_\alpha \geq \frac{1}{\alpha} - \delta. \qquad \text{where }
\delta = \essinf_{\substack{x \sim \mu \\ \underline{d}_\mu (x) > 1 /
    \alpha}} \overline{d}_\mu (x) - \underline{d}_\mu (x).
\]
None of these results implies the other.\footnote{In the sense that
  none follows immediately from the other. Of course, any two true
  statements imply each other.} For instance, if $\delta > 0$ then
the estimate in Theorem~\ref{the:randomlimsup} is stronger if $\alpha$
is sufficiently large but may be weaker for small $\alpha$.

Note that here we treat only the case $1/\alpha \leq s$, whereas in
\cite{ekstrompersson}, any $\alpha$ was considered. Related results
can also be found in a paper by Seuret \cite{seuret}.

\subsubsection{Dynamical Diophantine approximation} \label{sec:shrinking}

Suppose that $X$ is a metric space, $T \colon X \to X$ is a map and
that $\mu$ is a $T$-invariant probability measure.

If $y \in X$ and $r_n$ is a sequence of positive numbers that
decreases to 0, then we say that the balls $B(y, r_n)$ are shrinking
targets around $y$. One is often interested in if and how often the
orbit of a point $x$ hits one of the targets, i.e.\ $T^n (x) \in B (y,
r_n)$. Such hitting properties depend very much on the points $x$ and
$y$ as well as the sequence $r_n$, but it is often possible to say
something about the behaviour for $\mu$ almost all $x$ and $y$.

Here, we assume that $T \colon I \to I$, where $I$ is a compact
interval of positive length and that $\mu$ is a $T$-invariant
probability measure such that there are positive constants $t_1$ and
$c_1$ with
\begin{equation} \label{eq:measuredecay}
  \mu (D) \leq c_1 2^{-t_1 n}
\end{equation}
whenever $D \in \mathscr{D}_n$. We assume also that $(T,\mu)$ has
summable decay of correlations for functions of bounded variation,
that is, there is a function $p$ such that
\begin{equation} \label{eq:decay}
  \biggl| \int f \circ T^n \cdot g \, \mathrm{d} \mu - \int f \,
  \mathrm{d} \mu \int g \, \mathrm{d}\mu \biggr| \leq p(n) \lVert f
  \rVert_1 \lVert g \rVert,
\end{equation}
where $\lVert g \rVert$ is the norm $\lVert g \rVert = \lVert g
\rVert_1 + \var g = \int |g| \, \mathrm{d} \mu + \var g$, and $p$
satisfies
\[
\sum_{n=0}^\infty p(n) < \infty.
\]

Given a point $x$ and a number $\alpha > 0$, we consider the set
\[
E_\alpha (x) = \{\, y : |T^n (x) - y| \leq n^{-\alpha} \text{ for
  infinitely many } n \in \mathbb{N} \,\}.
\]
This is the set of points $y$ for which the orbit of $x$ hits the
shrinking targets $B(y,n^{-\alpha})$ infinitely many times. The study
of the fractal properties of sets of this type (and other related
types) is sometimes called dynamical Diophantine approximation,
possibly after \cite{fanschmelingtroubetzkoy}.

The Hausdorff dimension of $E_\alpha (x)$ for $\mu$ almost every $x$
was given by Fan, Schmeling and Troubetzkoy in the case that $T$ is
the doubling map and $\mu$ is a Gibbs measure
\cite{fanschmelingtroubetzkoy}. Liao and Seuret extended this result
to piecewise expanding Markov maps of an interval
\cite{liaoseuret}. Some results for piecewise expanding interval maps
without a Markov structure were obtained by Persson and Rams
\cite{perssonrams}.

In Section~\ref{sec:shrinkingproofs}, we shall prove the following
result.

\begin{theorem} \label{the:shrinking}
  Let
  \[
  s = s (\mu) = \lim_{\varepsilon \to 0^+} \sup \{\, t : G_\mu (t)
  \geq t - \varepsilon \,\},
  \]
  and $\alpha$ be such that $1/\alpha \leq s$. Then, almost surely,
  $E_\alpha (x) \in \mathscr{G}_\mu^\theta$, where $\theta = (\alpha s)^{-1}$.
  
  In particular, there is a set $A \subset I$ of full $\mu$ measure
  such that if $x_1, x_2, \ldots$ is a finite or countable sequence of
  points in $A$, then
  \[
  \frac{1}{\alpha} \frac{\udimh \mu}{s} \leq \dimh \bigcap_n E_\alpha
  (x_n) \leq \frac{1}{\alpha}.
  \]
\end{theorem}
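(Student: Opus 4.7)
The plan is to verify the hypotheses of Lemma~\ref{lem:frostman} for the open sets
\[
E_N = \bigcup_{n=N}^{2N-1} B(T^n(x), n^{-\alpha}),
\]
which satisfy $\limsup_N E_N \subset E_\alpha(x)$, with test measures
\[
\mu_N = \frac{1}{N}\sum_{n=N}^{2N-1}\lambda_n,
\]
where $\lambda_n$ is the normalised Lebesgue measure on $B(T^n(x), n^{-\alpha})$. Each $\mu_N$ is non-atomic with $\supp \mu_N \subset E_N$. Writing the block average $\tfrac{1}{N}\sum_{n=N}^{2N-1} f(T^n(x))$ as a difference of two ordinary Birkhoff averages, ergodicity of $(T,\mu)$ (implied by summable decay of correlations) together with a uniform continuity argument (the radius $n^{-\alpha}$ shrinks to zero) yields a full $\mu$-measure set $A_0 \subset I$ on which $\mu_N \to \mu$ weakly.

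The main obstacle is the uniform energy bound $I_\mu^\eta(\mu_N) \leq c_\eta$ for every $\eta < \theta = (\alpha s)^{-1}$. I would split
\[
I_\mu^\eta(\mu_N) = \frac{1}{N^2}\sum_{n,m=N}^{2N-1} \iint \mu(Q(y,z))^{-\eta}\,\mathrm{d}\lambda_n(y)\,\mathrm{d}\lambda_m(z)
\]
into diagonal ($n=m$) and off-diagonal parts. The $N$ diagonal terms are bounded uniformly via \eqref{eq:measuredecay} and a telescoping over the dyadic ancestors inside each ball, contributing only $O(1/N)$ in total. For the off-diagonal terms I would use the identity
\[
\mu(Q(y,z))^{-\eta} = \mu(D_0)^{-\eta} + \sum_{D \subsetneq D_0} \bigl(\mu(D)^{-\eta} - \mu(D^*)^{-\eta}\bigr)\mathbf{1}_D(y)\mathbf{1}_D(z),
\]
where $D^*$ is the dyadic parent of $D$ and $D_0$ is a fixed dyadic cube containing $I$, which together with $T$-invariance and decay of correlations \eqref{eq:decay} applied to $f = g = \mathbf{1}_D$, plus $\sum_n p(n) < \infty$, reduces $\E[I_\mu^\eta(\mu_N)]$ to a bounded multiple of the self-energy
\[
\iint \mu(Q(y,z))^{-\eta}\,\mathrm{d}\mu(y)\,\mathrm{d}\mu(z).
\]
Integrating in $z$ first produces $\int \sum_n \mu(D_n(y))^{1-\eta}(1 - \mu(D_{n+1}(y))/\mu(D_n(y)))\,\mathrm{d}\mu(y)$, which by the definition of $s = s(\mu)$ through the upper coarse multifractal spectrum $G_\mu$ is finite whenever $\eta < \theta$. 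A Borel--Cantelli argument along a countable sequence $\eta_k \nearrow \theta$, combined with subsequence extraction to convert an $L^1$-bound on the energy into an almost sure one along some $N_j \to \infty$, isolates the desired full-measure set $A \subset A_0$.

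Once $E_\alpha(x) \in \mathscr{G}_\mu^\theta$ is established for each $x \in A$ (first for the subsequential $\limsup_j E_{N_j}$ via Lemma~\ref{lem:frostman}, then lifted to $E_\alpha(x)$, which is $G_\delta$ as a countable intersection of closed sets, by the immediate preservation of $\mathscr{G}_\mu^\theta$-membership under $G_\delta$-supersets that follows from Definition~\ref{def:class}), Theorem~\ref{the:intersection} gives $\bigcap_n E_\alpha(x_n) \in \mathscr{G}_\mu^\theta$ for any countable $(x_n) \subset A$, and Theorem~\ref{the:dimension} combined with Remark~\ref{rem:dimension} delivers
\[
\dimh \bigcap_n E_\alpha(x_n) \geq \theta\,\udimh\mu = \frac{\udimh \mu}{\alpha s}.
\]
The matching upper bound $1/\alpha$ is elementary: $\bigcap_n E_\alpha(x_n) \subset \bigcup_{n \geq N} \overline{B}(T^n(x_1), n^{-\alpha})$ for every $N$, whose $s$-dimensional Hausdorff premeasure is at most $2^s\sum_{n \geq N} n^{-\alpha s}$, which tends to $0$ as $N \to \infty$ whenever $s > 1/\alpha$.
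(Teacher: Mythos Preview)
There are genuine gaps in the energy estimates and in the dimension step.

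\textbf{The diagonal bound and the role of $s(\mu)$.} Your test measures $\lambda_n$ are normalised Lebesgue measure on the balls, but the kernel is $\mu(Q(y,z))^{-\eta}$. Inequality~\eqref{eq:measuredecay} is an \emph{upper} bound on $\mu(D)$; it gives a \emph{lower} bound on the kernel and therefore cannot control the diagonal term from above. Worse, for a singular $\mu$ the ball $B(T^n(x),r_n)$ will contain dyadic cubes $D$ with $\mu(D)=0$, and the set $\{(y,z):Q(y,z)=D\}$ has positive $\lambda_n\times\lambda_n$-measure, so $\iint \mu(Q)^{-\eta}\,\mathrm{d}\lambda_n\,\mathrm{d}\lambda_n=\infty$. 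Relatedly, your claim that the self-energy $I_\mu^\eta(\mu)$ is finite precisely when $\eta<\theta$, and that this is where $s(\mu)$ enters, is incorrect: by Lemma~\ref{lem:energyestimate}, $I_\mu^\eta(\mu)\le (1-\eta)^{-1}$ for \emph{every} $\eta<1$. In the paper $s(\mu)$ enters at a completely different point: via Lemma~\ref{lem:t0} one builds an auxiliary measure $\nu=\mu|_C/\mu(C)$ with the crucial \emph{lower} bound $\nu(D)\ge 2^{-t_0[\beta n_k]-1}$ on all relevant cubes (equation~\eqref{eq:nuestimate}). The test measures are then taken as normalised $\mu$-restrictions to dyadic cubes containing the orbit points, not Lebesgue on balls; combined with Lemma~\ref{lem:energyestimate} this makes each diagonal term at most $(1-\theta)^{-1}\nu(D_i)^{-\theta}\le (1-\theta)^{-1}2^{\theta t_0\beta n_k}$, which is what forces $\theta\le (\beta t_0)^{-1}$ and ultimately $\theta=(\alpha s)^{-1}$. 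The same cap controls the variation $V(k)$ of the capped kernel $F_k$ so that Lemma~\ref{lem:perssonrams} applies to the off-diagonal sum. Your decomposition into indicators $\mathbf 1_D$ runs into the analogous problem: the correction term from decay of correlations involves $\sum_D(\mu(D)^{-\eta}-\mu(D^*)^{-\eta})\,\mu(D)\,\lVert\mathbf 1_D\rVert$, and since $\lVert\mathbf 1_D\rVert\ge \operatorname{var}\mathbf 1_D=2$ this sum need not be finite without a scale cut-off tied to a measure lower bound. Moreover, with Lebesgue test measures the ``main'' term $\int h_n\,\mathrm{d}\mu\int h_m\,\mathrm{d}\mu$ involves $\int_D\mu(B(v,r_n))\,\mathrm{d}v$ rather than $\mu(D)^2$, so it does not reduce to the $\mu$-self-energy as you claim.

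\textbf{The dimension step.} You invoke Remark~\ref{rem:dimension} to get $\dimh\ge\theta\,\udimh\mu$ from $E_\alpha(x)\in\mathscr{G}_\mu^\theta$ alone, but that remark explicitly records an \emph{open question}; the estimate $\dimh(E\cap D)\ge\theta\,\ldimh\mu$ is not proved there. The paper obtains the bound by first showing $E_\alpha(x)\in\mathscr{G}_\nu^\theta$ for the auxiliary $\nu$, which by construction satisfies $\nu(D)\le 2^{-t_2 n}$ with $t_2$ arbitrarily close to $\udimh\mu$; the first (elementary) estimate of Theorem~\ref{the:dimension}, applied to $\nu$, then gives $\dimh\ge\theta t_2$. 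Passing from $\mathscr{G}_\nu^\theta$ to $\mathscr{G}_\mu^\theta$ is a separate limiting argument using that $\mu(C)$ can be pushed to~$1$.
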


Below, we give two concrete applications of
Theorem~\ref{the:shrinking}. The proofs of these results are also in
Section~\ref{sec:shrinkingproofs}.

\subsubsection{The quadratic family} \label{sec:quadratic}

Let us consider the family of quadratic maps $T_a \colon [-1,1] \to
[-1,1]$ defined by $T_a (x) = 1 - a x^2$, where $a$ is a parameter in
$[0,2]$. Let $\gamma > 0$ be a small number and let
\[
\Delta = \{\, a \in [0, 2] : |T_a^n (0)| \geq e^{-\gamma n} \text{ and
} |(T_a^n)' (T_a(0)) | \geq 1.9^n \text{ for all } n \,\}.
\]
Benedicks and Carleson \cite{benedickscarleson} have proved that
$\Delta$ has positive Lebesgue measure. Using
Theorem~\ref{the:shrinking} we can prove the following result for
quadratic maps with $a \in \Delta$ .

\begin{corollary} \label{cor:quadratic}
  Let $T_a \colon [-1,1] \to [-1,1]$ be the quadratic map $T_a (x) = 1
  - a x^2$, where $a \in \Delta$. There is a set $A \subset [T_a^2
    (0), T_a (0)] = [1-a,1]$ of full Lebesgue measure, such that is
  $x_1, x_2, \ldots$ is a finite or countable sequence of points of\/
  $A$, then
  \[
  \dimh \bigcap_n E_\alpha (x_n) = \frac{1}{\alpha}.
  \]
\end{corollary}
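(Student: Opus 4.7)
The plan is to deduce Corollary~\ref{cor:quadratic} directly from Theorem~\ref{the:shrinking}, applied to the Benedicks--Carleson absolutely continuous invariant measure of $T_a$. For $a \in \Delta$, the work of Benedicks and Carleson, refined by Young, provides a $T_a$-invariant probability measure $\mu$ on $[T_a^2(0),T_a(0)]=[1-a,1]$ that is absolutely continuous with respect to Lebesgue, with density $\rho \in L^p$ for some $p>1$, and such that $(T_a,\mu)$ has exponential (hence summable) decay of correlations against functions of bounded variation. H\"older's inequality gives $\mu(D) \leq \lVert\rho\rVert_p |D|^{1-1/p}$ for every interval $D$, so the dyadic bound \eqref{eq:measuredecay} holds with $t_1 = 1-1/p > 0$, while the decay estimate yields \eqref{eq:decay} with summable $p(n)$.

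Next I would identify the multifractal quantities appearing in Theorem~\ref{the:shrinking}. Since $\mu$ is absolutely continuous on an interval of positive length, $\udimh\mu = 1$. Also $s(\mu) = 1$: the probability mass forces roughly $2^n$ dyadic intervals of generation $n$ to carry mass comparable to $2^{-n}$, while no more than $2^n$ such intervals meet the support, so $G_\mu(1) = 1$ and the condition $G_\mu(t) \geq t - \varepsilon$ holds at $t = 1$ for every $\varepsilon > 0$. The hypothesis $1/\alpha \leq s(\mu)$ of Theorem~\ref{the:shrinking} therefore reduces exactly to $\alpha \geq 1$, which is the standing assumption.

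Theorem~\ref{the:shrinking} then supplies a set $A \subset [1-a,1]$ of full $\mu$-measure such that every finite or countable sequence $x_1,x_2,\ldots \in A$ satisfies
\[
\frac{1}{\alpha} \;=\; \frac{1}{\alpha}\cdot\frac{\udimh\mu}{s(\mu)} \;\leq\; \dimh \bigcap_n E_\alpha(x_n) \;\leq\; \frac{1}{\alpha},
\]
so both bounds coincide. Because $\mu$ is absolutely continuous with respect to Lebesgue, the set $A$ also has full Lebesgue measure on $[1-a,1]$, which gives the statement of the corollary.

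The principal obstacle will be citing summable decay of correlations against BV observables for $T_a$ with $a \in \Delta$: Benedicks and Carleson themselves establish only existence of the invariant measure together with certain statistical laws, and the BV statement requires Young's tower construction or an equivalent spectral/transfer-operator approach. A secondary technical point is the clean verification that $s(\mu)=1$ from the definition of $G_\mu$; this reduces to the elementary observation that a probability measure on an interval can neither be asymptotically concentrated on far fewer than $2^n$ dyadic intervals of generation $n$ nor charge substantially more than $2^n$ such intervals, pinning $G_\mu(t)$ to a value $\geq t$ at $t = 1$.
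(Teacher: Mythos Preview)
Your approach matches the paper's: verify the hypotheses of Theorem~\ref{the:shrinking} for the Benedicks--Carleson/Young invariant measure and read off the conclusion. Your use of $\rho\in L^p$ and H\"older to obtain \eqref{eq:measuredecay} is a harmless variant of the paper's direct estimate (which uses Young's explicit bound $\rho_2(x)\le C\sum_k 1.9^{-k}|x-T_a^k(0)|^{-1/2}$ to get $t_1=\tfrac12$); both work. Your argument that $s(\mu)=1$ via $G_\mu(1)=1$ is correct once fleshed out with Lebesgue differentiation, though the paper instead notes that Young shows the density is bounded away from zero, which makes this immediate.

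There is, however, a genuine gap in your final sentence. From ``$\mu$ is absolutely continuous with respect to Lebesgue'' you cannot conclude that a set of full $\mu$-measure has full Lebesgue measure: that implication runs the wrong way. You need the reverse absolute continuity, i.e.\ that $\mu$ and Lebesgue are \emph{equivalent} on $[1-a,1]$. This is exactly what the paper invokes, citing Young's result that the density of $\mu_a$ is bounded away from zero on its support. Without that extra input, the set $A$ furnished by Theorem~\ref{the:shrinking} could in principle miss a set of positive Lebesgue measure on which the density vanishes. So you should add the lower bound on the density (from \cite{young}) to your list of properties of $\mu$ and use it in the last step.
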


\subsubsection{Piecewise expanding maps} \label{sec:piecewise}

Suppose that $T \colon [0,1] \to [0,1]$ is uniformly expanding and
piecewise $C^2$ with respect to a finite partition, and that $T$ is
covering in the sense that for any non-trivial interval $I \subset
[0,1]$, there is an $n$ such that $[0,1] \setminus W \subset T^n (I)$,
where $W$ denotes the set of points that eventually hit a
discontinuity (i.e.\ an endpoint of a partition element).

We assume that $\phi \colon [0,1] \to \mathbb{R}$ satisfies the
following conditions. There is a number $n_0$ such that
\[
\sup e^{S_{n_0} \phi} < \inf L_\phi^{n_0} 1,
\]
where $S_{n_0} \phi = \phi + \phi \circ T + \cdots + \phi \circ
T^{n_0-1}$ and
\[
L_\phi f (x) = \sum_{T(y) = x} e^{\phi (y)} f(y).
\]
Finally, $\phi$ is assumed to be piecewise $C^2$ with respect to the
same partition as $T$, and bounded from below.

Under these conditions, Liverani, Saussol and Vaienti proved that
there is a Gibbs measure $\mu_\phi$ with respect to the potential
$\phi$ and correlations decay exponentially fast for functions of
bounded variation \cite[Theorem~3.1]{liveranietal}. To the measure
$\mu_\phi$ corresponds a conformal measure $\nu_\phi$ and a density
$h_\phi$, that is
\[
\mu_\phi (E) = \int_E h_\phi \, \mathrm{d}\nu_\phi
\]
and if $E$ is a subset of a partition element, then
\[
\nu_\phi (T(E)) = \int_E e^{P(\phi) - \phi} \, \mathrm{d} \nu_\phi,
\]
where $P(\phi)$ is the topological pressure of $\phi$.  The density
$h_\phi$ is bounded and bounded away from zero. This implies that
$\underline{d}_{\mu_\phi}$ is an invariant function mod $\mu_\phi$ and
hence $\underline{d}_{\mu_\phi}$ is constant almost everywhere. We
conclude that $\dimh \mu_\phi = \ldimh \mu_\phi = \udimh \mu_\phi$.

\begin{corollary} \label{cor:piecewise}
  Let $T \colon [0,1] \to [0,1]$ be a piecewise expanding map
  satisfying the assumptions above, and assume that
  \begin{equation} \label{eq:dim-coarse}
  \dimh \mu_\phi = \lim_{\varepsilon \to 0} \sup \{\, t : G_{\mu_\phi}
  (t) \geq t - \varepsilon \,\}.
  \end{equation}
  Take $\alpha$ such that $1/\alpha \leq \dimh \mu$.
  
  There is a set $A \subset I$ of full $\mu$ measure such that if
  $x_1, x_2, \ldots$ is a finite or countable sequence of points in
  $A$, then
  \[
  \dimh \bigcap_n E_\alpha (x_n) = \frac{1}{\alpha}.
  \]
\end{corollary}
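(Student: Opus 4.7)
The plan is to deduce the corollary directly from Theorem~\ref{the:shrinking} applied with $\mu = \mu_\phi$. Two hypotheses from Section~\ref{sec:shrinking} must be verified: the measure decay condition \eqref{eq:measuredecay} and the summable decay of correlations \eqref{eq:decay} for functions of bounded variation. The latter is immediate, since Theorem~3.1 of \cite{liveranietal} provides exponential (hence summable) decay. For \eqref{eq:measuredecay}, observe that $\mu_\phi$ has bounded density with respect to the conformal measure $\nu_\phi$, and that $\phi$ is bounded above (being piecewise $C^2$ on a compact interval) as well as below. The standard Gibbs/bounded-distortion estimate then gives $\nu_\phi(I_n) \leq c\, e^{n(\sup \phi - P(\phi))}$ for any cylinder $I_n$ of depth $n$; by uniform expansion such cylinders have length at most $\lambda^{-n}$ for some $\lambda > 1$, so every dyadic cube $D \in \mathscr{D}_n$ is covered by a bounded number of cylinders of depth comparable to $n$. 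This yields $\mu_\phi(D) \leq c_1 2^{-t_1 n}$ for suitable positive constants $c_1, t_1$.

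With the hypotheses verified, I would next identify the quantity $s = s(\mu_\phi)$ appearing in Theorem~\ref{the:shrinking} with $\dimh \mu_\phi$. The paragraph preceding the statement of Corollary~\ref{cor:piecewise} already shows $\dimh \mu_\phi = \ldimh \mu_\phi = \udimh \mu_\phi$, and assumption \eqref{eq:dim-coarse} then forces $s = \dimh \mu_\phi = \udimh \mu_\phi$, so the ratio $\udimh \mu_\phi / s$ equals $1$. In particular, the hypothesis $1/\alpha \leq \dimh \mu$ of the corollary is exactly the hypothesis $1/\alpha \leq s$ needed to invoke Theorem~\ref{the:shrinking}.

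Theorem~\ref{the:shrinking} then supplies a full $\mu_\phi$-measure set $A \subset [0,1]$ such that, for any finite or countable sequence $x_1, x_2, \ldots$ in $A$,
\[
\frac{1}{\alpha} \;=\; \frac{1}{\alpha}\cdot\frac{\udimh \mu_\phi}{s} \;\leq\; \dimh \bigcap_n E_\alpha(x_n).
\]
The matching upper bound is standard: $\bigcap_n E_\alpha(x_n) \subset E_\alpha(x_1)$, and since $E_\alpha(x_1)$ is a limsup of intervals of diameter $2n^{-\alpha}$, the natural covers $\bigcup_{n \geq N} B(T^n(x_1), n^{-\alpha})$ show $\dimh E_\alpha(x_1) \leq 1/\alpha$. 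The only genuinely nontrivial step is the verification of \eqref{eq:measuredecay}, which reduces to the classical Gibbs-mass estimate for cylinders of a piecewise uniformly expanding map; everything else is bookkeeping, in which assumption \eqref{eq:dim-coarse} plays the essential role of forcing the coarse multifractal quantity $s$ to coincide with $\dimh \mu_\phi$ and thereby turning the lower bound of Theorem~\ref{the:shrinking} into the sharp value $1/\alpha$.
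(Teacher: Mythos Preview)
Your approach is essentially the paper's: apply Theorem~\ref{the:shrinking} to $\mu_\phi$, then use assumption~\eqref{eq:dim-coarse} to identify $s(\mu_\phi)=\dimh\mu_\phi=\udimh\mu_\phi$, so that the lower bound $\tfrac{1}{\alpha}\tfrac{\udimh\mu_\phi}{s}$ collapses to $\tfrac{1}{\alpha}$; the upper bound is the trivial cover. The only substantive difference is in how \eqref{eq:measuredecay} is checked: the paper simply cites Corollary~3 of \cite{perssonrams}, which gives the explicit exponent
\[
t_1=\limsup_{m\to\infty}\inf\frac{S_m\phi-mP(\phi)}{-\log|(T^m)'|}>0,
\]
whereas you sketch a direct Gibbs/conformal-measure argument.

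Your sketch is morally right but needs two repairs. First, the bound $\nu_\phi(I_n)\le c\,e^{n(\sup\phi-P(\phi))}$ is only useful if $\sup\phi<P(\phi)$, which is \emph{not} assumed; what is assumed is $\sup e^{S_{n_0}\phi}<\inf L_\phi^{n_0}1$, and from this (together with $L_\phi^{kn_0}1\ge(\inf L_\phi^{n_0}1)^k$) one gets $n_0P(\phi)>\sup S_{n_0}\phi$, hence exponential decay for cylinders of depth a multiple of $n_0$. Second, the claim that a dyadic interval of length $2^{-n}$ is covered by a \emph{bounded} number of cylinders of depth comparable to $n$ is not automatic when the expansion is not constant: cylinders of a fixed depth have lengths varying between $\Lambda^{-m}$ and $\lambda^{-m}$, and the two scales do not match. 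One has to argue more carefully, which is exactly what the formula for $t_1$ in \cite{perssonrams} encodes by comparing $S_m\phi-mP(\phi)$ to $-\log|(T^m)'|$ pointwise. Once \eqref{eq:measuredecay} is granted, the rest of your write-up matches the paper line for line.
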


\begin{remark}
  It is known that for some Markov maps $T$, the assumption
  \eqref{eq:dim-coarse} is satisfied, see Fan--Feng--Wu
  \cite{fanfengwu}. It is unknown to me if this is known in full
  generality in the setting of Corollary~\ref{cor:piecewise}. One might
  expect that \eqref{eq:dim-coarse} is always satisfied in this case.

  Given that \eqref{eq:dim-coarse} is satisfied,
  Corollary~\ref{cor:piecewise} improves the result by Rams and myself
  \cite[Theorem~2]{perssonrams}. We proved that $\dimh E_\alpha (x) =
  \frac{1}{\alpha}$ for
  \[
  \frac{1}{\alpha} < t_1 = \limsup_{m\to \infty} \inf \frac{S_m \phi - m
    P(\phi)}{- \log |(T^m)'|}.
  \]
  whereas Theorem~\ref{eq:dim-coarse} gives the dimension for a wider
  range of $\alpha$ as well as for intersections of several sets
  $E_\alpha$. The result by Liao and Seuret \cite{liaoseuret} covers
  also the case that $\frac{1}{\alpha} > \dimh \mu$, but is only valid
  for Markov maps, and only gives the dimension for a single set
  $E_\alpha$.
\end{remark}

\section{Further definitions} \label{sec:furtherdefinitions}

In this section we give some further definitions that will be used in
this paper.

\subsection{Hausdorff net-measures}

Occasionally, we will make use of the Hausdorff net-measures
$\mathscr{N}^s$, defined by
\[
\mathscr{N}^s (E) = \lim_{\delta \to 0} \mathscr{N}_\delta^s (E),
\]
where
\[
\mathscr{N}_\delta^s (E) = \inf \biggl\{\, \sum_k |D_k|^s : D_k
\in \mathscr{D}, \ |D_k| < \delta ,\ E \subset \bigcup_k D_k
\,\biggr\}.
\]
In particular, we have $\mathscr{N}_\infty^s =
\mathscr{M}_\lambda^{\theta}$, where $\lambda$ denotes the
$d$-dimensional Lebesgue measure and $s = \theta d$. Note the
conceptual difference between the parameters $s$ and $\theta$: The
parameter $s$ should be thought of as a dimension whereas the
parameter $\theta$ should be thought of something that interpolates
between dimension 0 ($\theta = 0$) and full dimension ($\theta =
1$). We will use $s$ and $t$ to denote dimensions and $\theta$ and
$\eta$ to denote such interpolating parameters.

Falconer \cite{falconer1, falconer2} studied the classes
$\mathscr{G}_\lambda^{s/d}$, denoting them by $\mathscr{G}^s$. Our
study is a generalisation of that of Falconer. Several of the proofs
in Section~\ref{sec:propertiesproofs} are similar to the corresponding
proofs in \cite{falconer2}.

\subsection{Dimension spectra and dimension of measures} \label{ssec:dimensiondefinitions}

The {\em lower local dimension} of a measure $\mu$ at a point $x$ is
defined to be
\[
\underline{d}_\mu (x) = \liminf_{r \to 0} \frac{\log \mu
  (B(x,r))}{\log r}.
\]
Equivalently,
\[
\underline{d}_\mu (x) = \sup \{\, s : \exists C \text{ s.t. } \mu
(B(x,r)) \leq C r^s, \ \forall r \in (0, r_0 (x)) \,\}.
\]
Let
\begin{align*}
  D_\mu (s) &= \dimh \{\, x \in \supp \mu : \, \underline{d}_\mu (x)
  \leq s \,\},\\
  G_\mu (s) &= \lim_{\varepsilon \to 0} \limsup_{r \to 0} \frac{\log
    (N (s + \varepsilon, r) - N (s - \varepsilon, r))}{- \log r},
\end{align*}
where $N (s, r)$ denotes the number of $d$-dimensional cubes of
the form
\[
Q = [k_1 r, (k_1+1) r) \times \cdots \times [k_d r, (k_d+1) r)
\]
with $k_1, \ldots, k_d \in \mathbb{Z}$ and $\mu (Q) \geq r^s$. The
definition of the function $D_\mu$ is similar to what is called the
\emph{Hausdorff multifractal spectrum} of the lower local dimension of
$\mu$, which is obtained if we replace $\underline{d}_\mu (x) \leq s$
by $\underline{d}_\mu (x) = s$. The function $D_\mu$ is clearly
increasing, but this is not the case for the Hausdorff multifractal
spectrum. However, $D_\mu$ and the Hausdorff multifractal spectrum are
often the same for small values of $s$.

The function $G_\mu$ is called the \emph{upper coarse multifractal
  spectrum} of $\mu$.

We will make use of the {\em lower Hausdorff dimension} of a measure
$\mu$, which is defined as the number
\[
  \ldimh \mu = \essinf_{\mu} \underline{d}_\mu = \inf \{\, \dimh A :
  \mu (A) > 0 \,\},
\]
see for instance Falconer \cite[Proposition~10.2]{falconertechniques}.
Similarly, the {\em upper Hausdorff dimension} of $\mu$ is the number
\[
  \udimh \mu = \esssup_{\mu} \underline{d}_\mu = \inf \{\, \dimh A :
  \mu (A^\complement) = 0 \,\},
\]
which was used in the statement and will be used in the proof of
Theorem~\ref{the:randomlimsup}.

If $\ldimh \mu = \udimh \mu$, then the {\em Hausdorff dimension} of
the measure $\mu$ is defined as $\dimh \mu = \ldimh \mu = \udimh \mu$.

\section{Proofs of properties of the classes $\mathscr{G}_\mu^\theta$} \label{sec:propertiesproofs}

In this section, we will prove the Theorems~\ref{the:limsup},
\ref{the:intersection} and \ref{the:dimension}.

Throughout this section, we will assume that $\mu$ is a non-atomic and
locally finite Borel measure. We will mention explicitly when we
assume that $\mu (R) = 0$.

In order to prove Theorem~\ref{the:limsup}, we will first prove a
series of lemmata.

\begin{lemma} \label{lem:removec}
  Suppose $E \subset \mathbb{R}^d$, $0 < \theta \leq 1$ and that
  there is a constant $c > 0$ such that
  \[
  \mathscr{M}_\mu^\theta (E \cap D) \geq c \mathscr{M}_\mu^\theta (D)
  \]
  holds whenever $D \in \mathscr{D}$. Then
  \[
  \mathscr{M}_\mu^\eta (E \cap D) = \mathscr{M}_\mu^\eta (D)
  \]
  holds whenever $D \in \mathscr{D}$ and $0 < \eta < \theta$.
\end{lemma}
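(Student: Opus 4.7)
The plan is to prove the equivalent statement that every countable dyadic cover $\{D_k\}$ of $E \cap D$ satisfies $\sum_k \mu(D_k)^\eta \geq \mu(D)^\eta$; the reverse inequality is immediate since $\{D\}$ is itself such a cover. After routine reductions---discarding cubes disjoint from $D$, treating the case $D \subseteq D_k$ directly, and pruning nested cubes---I may assume each $D_k \subseteq D$ and the $D_k$ are pairwise disjoint.

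I first treat covers of bounded depth, i.e.\ those with $D_k \in \mathscr{D}_{n_k}$ for some $n_k \leq N_0 + N$ where $D \in \mathscr{D}_{N_0}$, by induction on $N$. The case $N = 0$ forces the cover to be $\{D\}$. In the inductive step, if no $D_k = D$, I partition the cover into sub-covers $\mathscr{C}_i = \{D_k : D_k \subseteq D^{(i)}\}$ over the $2^d$ dyadic children $D^{(i)}$ of $D$; each $\mathscr{C}_i$ has depth at most $N-1$ relative to $D^{(i)}$. The hypothesis enters exactly here: when $\mu(D^{(i)}) > 0$ we have $\mathscr{M}_\mu^\theta(E \cap D^{(i)}) \geq c \mu(D^{(i)})^\theta > 0$, which forces $E \cap D^{(i)} \neq \emptyset$ so that $\mathscr{C}_i$ is a genuine cover of $E \cap D^{(i)}$ to which the inductive hypothesis applies; the case $\mu(D^{(i)}) = 0$ makes the subclaim vacuous. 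Summing the inductive bounds $\sum_{\mathscr{C}_i} \mu(D_k)^\eta \geq \mu(D^{(i)})^\eta$ and using the subadditivity $\mu(D)^\eta \leq \sum_i \mu(D^{(i)})^\eta$ (valid since $\eta \leq 1$ and the children partition $D$) closes the induction.

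The main difficulty is extending to covers of unbounded depth. Given such a cover with $S := \sum_k \mu(D_k)^\eta$ finite, I truncate at generation $N_0 + N$: for every $k$ with $n_k > N_0 + N$, replace $D_k$ by the unique $L \in \mathscr{D}_{N_0 + N}$ with $D_k \subsetneq L$. Disjointness of the $D_k$ prevents any such $L$ from equalling, being properly contained in, or properly containing any cover cube of generation $\leq N_0 + N$; hence the union of $\{D_k : n_k \leq N_0 + N\}$ with the distinct truncation cubes $L$ is a pairwise disjoint depth-$N$ cover of $E \cap D$. The bounded-depth result yields
\[
\sum_{n_k \leq N_0 + N} \mu(D_k)^\eta + \sum_L \mu(L)^\eta \geq \mu(D)^\eta.
\]
For each such $L$, the cubes $\{D_j : D_j \subsetneq L\}$ cover $E \cap L$, so by the $\theta$-hypothesis applied at $L$ together with the power-mean inequality $\sum x_k^\eta \geq \bigl(\sum x_k^\theta\bigr)^{\eta/\theta}$ (which follows from subadditivity of $t \mapsto t^{\eta/\theta}$),
\[
\mu(L)^\eta \leq c^{-\eta/\theta} \sum_{D_j \subsetneq L} \mu(D_j)^\eta.
\]
Summing over the disjoint truncation cubes gives $\sum_L \mu(L)^\eta \leq c^{-\eta/\theta} \sum_{n_k > N_0 + N} \mu(D_k)^\eta$, which is $c^{-\eta/\theta}$ times the tail of the convergent series $S$. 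Letting $N \to \infty$ eliminates the truncation correction and forces $S \geq \mu(D)^\eta$.

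The obstacle I anticipate is precisely this truncation step: without the $\theta$-hypothesis and the resulting power-mean estimate, the cost of replacing infinitely deep cubes by their coarse ancestors would not be controllable by the vanishing tail of the original sum, and the finite-depth bound would fail to pass to the limit.
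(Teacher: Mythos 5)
Your proof is correct, but it takes a genuinely different route from the paper's. The paper avoids both the induction and the limit argument by a single clever level choice: since $\eta - \theta < 0$ and $\mu$ is non-atomic, one can fix a level $m$ so large that $\mu(C)^{\eta-\theta} \geq c^{-1}\mu(D)^{\eta-\theta}$ for every sub-cube $C \subseteq D$ of depth $\geq m$. Partitioning $D$ at level $m$ (refined by any cover-cubes of depth $< m$), the $\theta$-hypothesis applied once per level-$m$ cell then gives, after one line of algebra, $\sum_l \mu(D_l)^\eta \geq \mu(D)^{\eta-\theta}\sum_k \mu(Q_k)^\theta \geq \mu(D)^\eta$ outright, with no loss factor and no passage to the limit. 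Your argument instead uses the hypothesis twice in structurally different ways: first, merely to guarantee $E$ meets every positive-measure sub-cube, which makes the bounded-depth claim a consequence of pure $\eta$-subadditivity by induction on depth; and second, at the truncation cubes, where the $\ell^\eta \geq (\ell^\theta)^{\eta/\theta}$ estimate converts the $\theta$-hypothesis into an $\eta$-bound at the cost of a fixed factor $c^{\eta/\theta} < 1$, which you then wash out by letting the truncation level tend to infinity and invoking the vanishing of the tail $\sum_{n_k > N_0+N}\mu(D_k)^\eta$. Both proofs are correct; the paper's is shorter and achieves exact equality in one pass, while yours illuminates that the hypothesis can be split into a qualitative density statement plus a quantitative estimate at a single scale, which some readers may find more transparent even if less economical.
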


\begin{proof}
  Take $0 < \eta < \theta$ and $D \in \mathscr{D}$, and let
  $\{D_l\}_{l=1}^\infty$ be a disjoint cover by dyadic cubes of the
  set $E \cap D$.

  Since $\mu (D)$ is finite and $\mu$ is not atomic, we may choose a
  number $m$ such that whenever $C \subset D$ and $C \in
  \mathscr{D}_n$ for some $n \geq m$, then
  \[
  \mu (C)^{\eta-\theta} \geq c^{-1} \mu (D)^{\eta-\theta}.
  \]

  We write $D$ as a finite disjoint union of dyadic cubes $\mathscr{Q}
  = \{Q_k\}$ in the following way. All $D_k$ that belong to
  $\mathscr{D_n}$ for some $n < m$ are put in $\mathscr{Q}$. The part
  of $D$ that are not covered by these $D_k$ can be written as a union
  of elements in $\mathscr{D}_m$. These elements of $\mathscr{D}_m$
  are added to $\mathscr{Q}$. This construction implies that for any
  $Q_k \in \mathscr{Q}$, we have that one and only one of the
  following two cases holds.  \renewcommand{\theenumi}{\roman{enumi}}
  \begin{enumerate}
  \item \label{case:large}
    there exists an $l$ such that $Q_k = D_l$ and $Q_k \in
    \mathscr{D}_n$ with $n < m$.
  \item \label{case:small} 
    $Q_k \in \mathscr{D}_m$ and all cubes $D_l$ that intersect $E
    \cap Q_k$ are contained in $Q_k$, that is, each of them belong to
    some $\mathscr{D}_n$ with $n \geq m$.
  \end{enumerate}
  
  Suppose $Q_k$ satisfies (\ref{case:large}) above. Then
  \begin{equation} \label{eq:case1}
    \sum_{D_l \subset Q_k} \mu (D_l)^\eta = \mu (Q_k)^\eta = \mu
    (Q_k)^{\eta-\theta} \mu (Q_k)^\theta \geq \mu (D)^{\eta-\theta}
    \mu (Q_k)^\theta.
  \end{equation}

  Suppose that $Q_k$ satisfies (\ref{case:small}) above. By the choice
  of $m$, we have for $D_l \subset Q_k$ that
  \[
  \mu (D_l)^\eta = \mu (D_l)^{\eta-\theta} \mu (D_l)^\theta \geq
  c^{-1} \mu (D)^{\eta-\theta} \mu (D_l)^\theta.
  \]
  Hence
  \begin{align}
    \sum_{D_l \subset Q_k} \mu (D_l)^\eta &\geq \sum_{D_l \subset Q_k}
    c^{-1} \mu (D)^{\eta-\theta} \mu (D_l)^\theta \geq c^{-1} \mu
    (D)^{\eta-\theta} \mathscr{M}_\mu^\theta (E \cap Q_k) \nonumber
    \\ &\geq \mu (D)^{\eta-\theta} \mathscr{M}_\mu^\theta (Q_k) = \mu
    (D)^{\eta-\theta} \mu (Q_k)^\theta. \label{eq:case2}
  \end{align}
  Together, \eqref{eq:case1} and \eqref{eq:case2} show that
  \[
  \sum_{D_l \subset Q_k} \mu (D_l)^\eta \geq \mu (D)^{\eta-\theta} \mu
  (Q_k)^\theta
  \]
  for any $Q_k$.

  Summing over all $Q_k$, we finally get
  \begin{align*}
    \sum_l \mu (D_l)^\eta &= \sum_k \sum_{D_l \subset Q_k} \mu
    (D_l)^\eta \geq \mu (D)^{\eta-\theta} \sum_k \mu (Q_k)^\theta
    \\ &\geq \mu (D)^{\eta-\theta} \mathscr{M}_\mu^\theta (D) =
    \mathscr{M}_\mu^\eta (D). \qedhere
  \end{align*}
\end{proof}

\begin{lemma} \label{lem:open}
  Suppose $E \subset \mathbb{R}^d$, $0 < \theta \leq 1$ and that
  there is a constant $c > 0$ such that
  \[
  \mathscr{M}_\mu^\theta (E \cap D) \geq c \mathscr{M}_\mu^\theta (D)
  \]
  holds whenever $D \in \mathscr{D}$. Then
  \[
  \mathscr{M}_\mu^\theta (E \cap U) \geq c \mathscr{M}_\mu^\theta (U)
  \]
  holds for all open sets $U$.
\end{lemma}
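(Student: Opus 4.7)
The plan is to reduce the statement for a general open $U$ to the dyadic hypothesis by writing $U = \bigsqcup_k D_k$ as a disjoint union of its maximal dyadic subcubes (which exists for any open proper subset of $\mathbb{R}^d$; the case $U = \mathbb{R}^d$ is handled by partitioning into cubes of a single generation). The goal then becomes: for any dyadic cover $\{C_j\}$ of $E \cap U$, show $\sum_j \mu(C_j)^\theta \geq c\,\mathscr{M}_\mu^\theta(U)$, so that taking the infimum over covers proves the lemma.

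The core step is a dichotomy that exploits the nesting property of dyadic cubes. I would call $D_k$ of Type~B if some $C_j$ satisfies $C_j \supseteq D_k$, and of Type~A otherwise; write $\mathcal{A}$ and $\mathcal{B}$ for the corresponding sets of indices $k$, and set $\mathcal{J} := \{j : C_j \supseteq D_k \text{ for some } k \in \mathcal{B}\}$. For $k \in \mathcal{A}$, nesting forces every $C_j$ that meets $D_k$ to lie strictly inside $D_k$, so $\{C_j : C_j \subsetneq D_k\}$ covers $E \cap D_k$, and the hypothesis yields $\sum_{C_j \subsetneq D_k}\mu(C_j)^\theta \geq c\,\mu(D_k)^\theta$. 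The key bookkeeping observation is that a $C_j$ strictly inside a Type~A $D_k$ cannot contain any other maximal cube $D_{k'}$ (such a $D_{k'}$ would lie inside $D_k$, contradicting the maximality of $D_{k'}$); hence the families $\{j : C_j \subsetneq D_k\}$ over different $k \in \mathcal{A}$ are pairwise disjoint and all disjoint from $\mathcal{J}$.

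Summing the Type~A estimates over $k$ using this disjointness yields
\[
c \sum_{k \in \mathcal{A}} \mu(D_k)^\theta \leq \sum_{j \notin \mathcal{J}} \mu(C_j)^\theta,
\]
while $\{D_k\}_{k \in \mathcal{A}}$ covers $U_A := \bigsqcup_{k \in \mathcal{A}} D_k$ and $\{C_j\}_{j \in \mathcal{J}}$ covers $U_B := \bigsqcup_{k \in \mathcal{B}} D_k$; subadditivity of $\mathscr{M}_\mu^\theta$ applied to $U = U_A \cup U_B$ then gives $\mathscr{M}_\mu^\theta(U) \leq \sum_{k \in \mathcal{A}} \mu(D_k)^\theta + \sum_{j \in \mathcal{J}} \mu(C_j)^\theta$.

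To finish with the sharp constant $c$ rather than a degraded value such as $c/(c+1)$, I would use that $c \leq 1$, which follows at once from $\mathscr{M}_\mu^\theta(E \cap D) \leq \mathscr{M}_\mu^\theta(D) = \mu(D)^\theta$. Multiplying the previous inequality by $c$ and then applying $c \leq 1$ to the $\mathcal{J}$ piece gives
\[
c\,\mathscr{M}_\mu^\theta(U) \leq c\!\sum_{k \in \mathcal{A}}\!\mu(D_k)^\theta + c\!\sum_{j \in \mathcal{J}}\!\mu(C_j)^\theta \leq \sum_{j \notin \mathcal{J}}\mu(C_j)^\theta + \sum_{j \in \mathcal{J}}\mu(C_j)^\theta = \sum_j \mu(C_j)^\theta.
\]
The main obstacle is the combinatorics of cubes $C_j$ that straddle $\partial U$ or simultaneously contain several $D_k$; the Type~A/B split, combined with the disjointness of $\mathcal{J}$ from the Type~A families, is exactly the observation that tames this, and the remark $c \leq 1$ is what preserves the constant in the conclusion.
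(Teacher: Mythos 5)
Your proof is correct and follows essentially the same route as the paper's: both decompose $U$ into disjoint dyadic cubes, split them according to whether they are contained in some cover element or instead force every cover element they meet to be a strict subcube, apply the hypothesis to the latter, and absorb the former using $c\le 1$. One cosmetic caveat: maximal dyadic subcubes need not exist for an unbounded open $U$ (e.g.\ $U=(-1,\infty)$ in $d=1$ with $P=0$ contains arbitrarily large dyadic intervals around $x=3$), but any disjoint dyadic decomposition of $U$ by cubes contained in $U$ does exist and suffices, since your key bookkeeping step really only uses disjointness of the $D_k$ rather than their maximality.
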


\begin{proof}
  Write $U$ as a disjoint union of dyadic cubes from $\mathscr{D}$,
  \[
  U = \bigcup_k Q_k,
  \]
  and suppose that $(D_l)_{l=1}^\infty$ is a disjoint cover of $E \cap
  U$ by dyadic cubes.

  For each $Q_k$, with $\mu (Q_k) > 0$ the set $E \cap Q_k$ is not
  empty, since it would violate the assumption. Hence, for each $Q_k$,
  either $\mu (Q_k) = 0$ or we have one of the following two cases:
  \renewcommand{\theenumi}{\roman{enumi}}
  \begin{enumerate}
  \item \label{case:one} The dyadic cube $Q_k$ intersect exactly one
    dyadic cube $D_l$. We then have $Q_k \subset D_l$.
  \item \label{case:several} The dyadic cube $Q_k$ intersect more that
    one of the dyadic cubes $D_l$.  Then, if $D_l$ intersect $Q_k$ we
    have that $D_l$ is a strict sub-cube of $Q_k$.
  \end{enumerate}
  In case (\ref{case:several}), we have
  \[
  \sum_{D_l \subset Q_k} \mu (D_l)^\theta \geq \mathscr{M}_\mu^\theta
  (E \cap Q_k) \geq c \mathscr{M}_\mu^\theta (Q_k) = c \mu
  (Q_k)^\theta.
  \]

  We define a cover $\{C_k\}_{k=1}^\infty$ of $U$ using the covers
  $\{Q_k\}_{k=1}^\infty$ and $\{D_l\}_{l=1}^\infty$. For each $k$, if
  $Q_k$ satisfies (\ref{case:one}), let
  \[
  \mathscr{Q}_k = \{\, D_l : D_l \cap Q_k \neq \emptyset \,\}.
  \]
  Note that (\ref{case:one}) says that $\mathscr{Q}_k$ contains
  exactly one element.  Otherwise, if $Q_k$ satisfies
  (\ref{case:several}), let
  \[
  \mathscr{Q}_k = \{Q_k\}.
  \]
  Hence, in both cases, $\mathscr{Q}_k$ contains exactly one element.

  Put $\{C_k\} = \bigcup_{k=1}^\infty \mathscr{Q}_k$. We then have
  \[
  \sum_l \mu (D_l)^\theta \geq \sum_k c \mu (C_k)^\theta \geq c
  \mathscr{M}_\mu^\theta (U).
  \]
  Since $\{D_l\}_{l=1}^\infty$ is an arbitrary cover of $E \cap U$,
  this proves that $\mathscr{M}_\mu^\theta (E \cap U) \geq c
  \mathscr{M}_\mu^\theta (U)$.
\end{proof}

Recall that $R_n$ denotes the boundaries of the dyadic cubes of
$\mathscr{D}_n$.

\begin{lemma} \label{lem:opensets}
 Suppose $U$ is open and $\mu (R_n) = 0$. Then $\mathscr{M}_\mu^\theta
 (U \setminus R_n) = \mathscr{M}_\mu^\theta (U)$.
\end{lemma}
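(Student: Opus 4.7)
The $\leq$ direction is immediate from monotonicity, so the plan is to prove $\mathscr{M}_\mu^\theta (U \setminus R_n) \geq \mathscr{M}_\mu^\theta (U)$. Take an arbitrary dyadic cover $\{D_k\}$ of $U \setminus R_n$. By first discarding any $D_k$ that misses $U$ and then keeping only the inclusion-maximal members of what remains (which are pairwise disjoint, since any two dyadic cubes are either nested or disjoint), the sum $\sum_k \mu(D_k)^\theta$ can only decrease; so I may assume the cover is disjoint and that every $D_k$ meets $U$.

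Next, write $U$ as the disjoint union $U = \bigsqcup_i Q_i$ of its maximal dyadic subcubes (which exist because $U$ is open). By dyadic nesting each $D_k$ in the cover either lies inside some $Q_i$ or strictly contains one or more of the $Q_i$'s, and disjointness of the cover makes these two possibilities mutually exclusive at each $Q_i$: if one $D_k$ strictly contains $Q_i$, no other cube of the cover can meet $Q_i$. Fix now an index $i$ such that no $D_k$ strictly contains $Q_i$. Then $\{D_k : D_k \subset Q_i\}$ is a disjoint family of dyadic subcubes of $Q_i$ covering $Q_i \setminus R_n$; using $\mu(R_n) = 0$,
\[
\sum_{D_k \subset Q_i} \mu(D_k) \;\geq\; \mu(Q_i \setminus R_n) \;=\; \mu(Q_i),
\]
and since $\theta \leq 1$ the subadditivity of $x \mapsto x^\theta$ yields $\sum_{D_k \subset Q_i} \mu(D_k)^\theta \geq \mu(Q_i)^\theta$.

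Finally, I assemble a dyadic cover $\mathcal{C}$ of $U$ consisting of $Q_i$ for every index $i$ of the type just considered, together with every $D_k$ from the original cover that strictly contains some $Q_i$. Each $Q_i$ is either in $\mathcal{C}$ directly or is contained in a ``swallowing'' $D_k \in \mathcal{C}$, so $\bigcup \mathcal{C} \supset U$. The mutual exclusion noted above ensures that the two kinds of cubes in $\mathcal{C}$ are disjoint and that the bound of the previous paragraph can be summed over the relevant $i$ without double counting, giving $\sum_{D \in \mathcal{C}} \mu(D)^\theta \leq \sum_k \mu(D_k)^\theta$. Hence $\mathscr{M}_\mu^\theta(U) \leq \sum_k \mu(D_k)^\theta$, and taking the infimum over all covers of $U \setminus R_n$ proves the result. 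The only delicate point is precisely this dichotomy at each $Q_i$: one must use the disjointness of the original cover to guarantee that a large $D_k$ swallowing $Q_i$ excludes any other cube of the cover from meeting $Q_i$, which is what makes the replacement $\{D_k \subset Q_i\} \leadsto \{Q_i\}$ both valid and non-increasing in the sum.
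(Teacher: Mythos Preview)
Your proof is correct and somewhat cleaner than the paper's. Both arguments rest on the same core inequality---if a disjoint dyadic family covers $Q\setminus R_n$ for a dyadic cube $Q$, then $\sum\mu(D_k)^\theta \ge \mu(Q)^\theta$ by $\mu(R_n)=0$ and subadditivity of $t\mapsto t^\theta$---but they organise the combinatorics differently. You use the Whitney-type decomposition of $U$ into its maximal dyadic subcubes $Q_i$ and a single dichotomy (each $Q_i$ is either swallowed by a cover cube or has its interior pieces replaced by $Q_i$ itself), which mirrors the structure of the paper's proof of Lemma~\ref{lem:open}. The paper instead keeps all cover cubes at levels $\le n$ and then runs a level-by-level induction over $m>n$, at each stage testing every $C\in\mathscr{D}_m$ to see whether $C\setminus R_n$ is already covered by finer cubes of the original cover and, if so, absorbing them into $C$. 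Your route avoids this induction entirely and makes the ``no double counting'' step transparent.

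One small caveat, shared with the paper: the existence of \emph{maximal} dyadic subcubes of $U$ (and, in your preliminary reduction, of maximal elements of the cover) can fail if $U$ contains arbitrarily large dyadic cubes. The paper's own proof makes the same tacit assumption when it asserts ``there is a largest dyadic cube $D$ such that $x\in D\subset U$''. This is harmless in context, since the lemma is only ever applied (via Lemma~\ref{lem:intersection}) to sets sitting inside a fixed $D\in\mathscr{D}$, but if you want the statement in full generality you could note that the inequality is trivial when $\sum_k\mu(D_k)^\theta=\infty$, and otherwise the cover cannot contain an infinite ascending chain of cubes with positive $\mu$-measure.
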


\begin{proof}
  Let $D \in \mathscr{D}$ and let $D_k$ be a cover of $D \setminus
  \partial R_n$ by dyadic cubes. Then
  \begin{equation} \label{eq:coverestimate0}
    \sum \mu (D_k)^\theta \geq \Bigl( \sum \mu(D_k) \Bigr)^\theta \geq
    \mu (D)^\theta,
  \end{equation}
  since $\mu (D \setminus R_n) = \mu (D)$, which shows that
  \begin{equation} \label{eq:coverestimate}
    \mathscr{M}_\mu^\theta (D \setminus R_n) = \mathscr{M}_\mu^\theta
    (D).
  \end{equation}

  Now, let $D_k$ be a disjoint dyadic cover of $U \setminus R_n$. We
  will modify $\{D_k\}$ into a cover $\mathscr{C} = \{C_k\}$ of $U$
  such that
  \[
  \sum \mu (D_k)^\theta \geq \sum \mu (C_k)^\theta.
  \]
  
  If $D_k \in \mathscr{D}_m$ for some $m \leq n$, then we put $D_k$ in
  $\mathscr{C}$. 

  We now proceed by induction over $m > n$. Start with $m = n +1$ and
  suppose that $C \in \mathscr{D}_m$. If $C$ is covered by the
  elements already in $\mathscr{C}$, then we do nothing. Otherwise we
  proceed as follows.

  If $C \setminus R_n$ is covered by the cubes in
  \[
  \mathscr{E} (C) = \biggl\{\, D_k : D_k \in \bigcup_{l \geq m}
  \mathscr{D}_l,\ D_k \subset C\, \biggr\},
  \]
  then
  \begin{equation} \label{eq:coverestimate2}
    \sum_{D_k \in \mathscr{E} (C)} \mu (D_k)^\theta \geq \mu
    (C)^\theta
  \end{equation}
  by \eqref{eq:coverestimate0} (or by \eqref{eq:coverestimate}), and
  we let $C$ be an element of $\mathscr{C}$.

  This process is carried out for each $C \in \mathscr{D}_m$. The
  number $m$ is then incremented by one and the process is repeated.

  In this way we get the new cover $\mathscr{C}$, which by
  construction is disjoint. By \eqref{eq:coverestimate2} we have
  \begin{equation} \label{eq:coverestimate3}
    \sum \mu (D_k)^\theta \geq \sum_{C \in \mathscr{C}} \mu (C)^\theta.
  \end{equation}

  It is clear that $\mathscr{C}$ covers $U \setminus R_n$, since
  $\bigcup_k D_k \subset \bigcup_{C \in \mathscr{C}} C$. Let $x \in R_n
  \cap U$. Then there is a largest dyadic cube $D$ such that $x \in D
  \subset U$. By the construction of $\mathscr{C}$ we have that either
  $D \in \mathscr{C}$, or there is a $C \in \mathscr{C}$, with $D
  \subset C$. In any case, the point $x$ is covered by $\mathscr{C}$
  and since $x \in R_n \cap U$ is arbitrary, this proves that
  $\mathscr{C}$ covers $U$. Hence
  \begin{equation} \label{eq:coversU}
  \sum_{C \in \mathscr{C}} \mu (C)^\theta \geq \mathscr{M}_\mu^\theta
  (U).
  \end{equation}

  By \eqref{eq:coverestimate3} and \eqref{eq:coversU}, we have
  $\mathscr{M}_\mu^\theta (U \setminus R_n) \geq
  \mathscr{M}_\mu^\theta (U)$, so that $\mathscr{M}_\mu^\theta (U
  \setminus R_n) = \mathscr{M}_\mu^\theta (U)$.
\end{proof}

The following increasing sets lemma is a special case of Theorem~52 of
Rogers \cite{rogers}. It is needed to prove
Lemma~\ref{lem:intersection} and Theorem~\ref{the:dimension}.

\begin{lemma} \label{lem:increasingsets}
  If $F_1 \subset F_2 \subset \cdots$ is an increasing sequence of\/
  subsets of\/ $\mathbb{R}^d$, then
  \[
    \mathscr{M}_\mu^\theta \biggl( \bigcup_{n=1}^\infty F_n \biggr) =
    \sup_n \mathscr{M}_\mu^\theta (F_n).
  \]
\end{lemma}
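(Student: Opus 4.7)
For the easy direction, $\sup_n \mathscr{M}_\mu^\theta(F_n) \leq \mathscr{M}_\mu^\theta(F)$ (where $F = \bigcup_n F_n$) follows at once from monotonicity of $\mathscr{M}_\mu^\theta$, since any dyadic cover of $F$ also covers each $F_n$.

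For the reverse inequality, let $s = \sup_n \mathscr{M}_\mu^\theta(F_n)$; we may assume $s < \infty$, otherwise the claim is trivial. Fix $\epsilon > 0$. For each $n$ choose a dyadic cover $\mathcal{C}_n \subset \mathscr{D}$ of $F_n$ with $\sum_{D \in \mathcal{C}_n} \mu(D)^\theta < s + \epsilon$, and set $U_n = \bigcup_{D \in \mathcal{C}_n} D$. Then $U_n \supset F_n$ is Borel and $\mathscr{M}_\mu^\theta(U_n) \leq s + \epsilon$.

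The plan is to build an increasing sequence of Borel envelopes and reduce the statement to the Borel case. Define $\tilde{F}_n = \bigcap_{m \geq n} U_m$. Since $F_n$ is increasing, $F_n \subset F_m \subset U_m$ for all $m \geq n$, giving $F_n \subset \tilde{F}_n$; each $\tilde{F}_n$ is Borel, the sequence is nested ($\tilde{F}_n \subset \tilde{F}_{n+1}$), and $\mathscr{M}_\mu^\theta(\tilde{F}_n) \leq \mathscr{M}_\mu^\theta(U_n) \leq s + \epsilon$. Moreover $F = \bigcup_n F_n \subset \bigcup_n \tilde{F}_n$, so if one can establish the increasing-sets property on the Borel sets $\tilde{F}_n$, then
\[
\mathscr{M}_\mu^\theta (F) \leq \mathscr{M}_\mu^\theta\biggl(\bigcup_n \tilde{F}_n\biggr) = \sup_n \mathscr{M}_\mu^\theta(\tilde{F}_n) \leq s + \epsilon,
\]
and letting $\epsilon \to 0$ finishes the proof.

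The hard part is the Borel case. Because $\theta \leq 1$ yields only the \emph{strict} subadditivity $\mu(D)^\theta \leq \sum_k \mu(D_k)^\theta$ for a dyadic partition $D = \bigcup_k D_k$, the set function $\mathscr{M}_\mu^\theta$ is not a measure, so the step cannot be read off from $\sigma$-additivity. What saves the day is the net structure of $\mathscr{D}$: any two dyadic cubes are either nested or disjoint, which lets one show by dyadic refinement that near-optimal covers can be extracted in the limit along an increasing Borel sequence. This is exactly the scope of Theorem~52 of Rogers \cite{rogers}, and I would appeal to it for this final step, as the statement of the lemma already indicates. (Alternatively, one can run a direct compactness argument: since $\mathscr{D}$ is countable, a diagonal extraction from the indicator functions $\mathbb{1}_{\mathcal{C}_n} \in \{0,1\}^\mathscr{D}$ produces a limit family $\mathcal{C}^*$ with $\sum_{D \in \mathcal{C}^*}\mu(D)^\theta \leq s + \epsilon$ by Fatou; the residual of $F$ not covered by $\mathcal{C}^*$ consists of points whose covering cubes strictly shrink, and must be handled by iterating the construction, the details of which are precisely what Rogers encodes.)
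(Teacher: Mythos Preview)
The paper offers no proof of this lemma: it simply records that the statement is a special case of Theorem~52 of Rogers~\cite{rogers} and moves on. Your proposal lands in the same place, ultimately invoking Rogers for the substantive step, so in that sense you match the paper exactly.

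Your preliminary passage through Borel envelopes $\tilde{F}_n$, however, is superfluous. Rogers' increasing-sets theorem applies to \emph{arbitrary} increasing sequences of subsets; its hypotheses concern the premeasure and the covering family (here the countable net $\mathscr{D}$, whose members are pairwise nested-or-disjoint), not any measurability of the $F_n$. The Borel case is no easier than the general one---what drives the argument is precisely the net structure you already identified---so you may invoke Rogers directly and omit the detour. Your alternative sketch via diagonal extraction on $\{0,1\}^{\mathscr{D}}$ is indeed closer in spirit to how Rogers' proof actually runs.
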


\begin{lemma} \label{lem:intersection}
  Suppose $\mu (R) = 0$ and that $\{E_n\}_{n=1}^\infty$ is a sequence
  of open sets such that
  \begin{equation} \label{eq:intersectionlemma}
    \mathscr{M}_\mu^\theta (E_n \cap U) =
    \mathscr{M}_\mu^\theta (U)
  \end{equation}
  holds for any $n$ and any open set $U$. Then, for any open set $U$,
  we have
  \[
  \mathscr{M}_\mu^\theta \biggl(U \cap \bigcap_{n=1}^\infty E_n
  \biggr) = \mathscr{M}_\mu^\theta (U).
  \]
\end{lemma}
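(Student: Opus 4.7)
My plan is to prove the equality $\mathscr{M}_\mu^\theta(U \cap \bigcap_n E_n) = \mathscr{M}_\mu^\theta(U)$ via a nested compactness argument in the Baire category style, using Lemma~\ref{lem:opensets} in a crucial way to neutralise the density of the dyadic grid $R$. Only the lower bound is non-trivial, since monotonicity of $\mathscr{M}_\mu^\theta$ gives the upper bound. I would first reduce to the case in which $U$ is bounded and $\mathscr{M}_\mu^\theta(U) > 0$: the bounded reduction uses Lemma~\ref{lem:increasingsets} applied to the increasing union $U = \bigcup_{j=1}^\infty (U \cap B(0,j))$, while the zero-measure case is vacuous. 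Fix $\varepsilon \in (0, \mathscr{M}_\mu^\theta(U))$ together with positive numbers $\varepsilon_n$ with $\sum_n \varepsilon_n < \varepsilon$.

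The heart of the argument is an inductive construction. A trivial iteration of the hypothesis gives $\mathscr{M}_\mu^\theta(U \cap E_1 \cap \cdots \cap E_n) = \mathscr{M}_\mu^\theta(U)$ for each finite $n$, since every partial intersection is itself open. Setting $U_0 = U$, I would construct a nested sequence of bounded open sets $U_n$ satisfying $\overline{U_n} \subset (U_{n-1} \cap E_n) \setminus R_n$ and $\mathscr{M}_\mu^\theta(U_n) \geq \mathscr{M}_\mu^\theta(U_{n-1}) - \varepsilon_n$. To produce $U_n$, I would combine the hypothesis with Lemma~\ref{lem:opensets} to obtain $\mathscr{M}_\mu^\theta((U_{n-1} \cap E_n) \setminus R_n) = \mathscr{M}_\mu^\theta(U_{n-1})$, and then take an inner approximation of the open set $V = (U_{n-1} \cap E_n) \setminus R_n$ by $V^{(\delta)} = \{x \in V : \mathrm{dist}(x, V^c) > \delta\}$; applying Lemma~\ref{lem:increasingsets} to the increasing family $\{V^{(1/k)}\}_{k=1}^\infty$ shows that $\mathscr{M}_\mu^\theta(V^{(1/k)})$ converges upwards to $\mathscr{M}_\mu^\theta(V)$, so a sufficiently small $\delta$ yields the desired $U_n = V^{(\delta)}$, with $\overline{U_n} \cap R_n = \emptyset$ automatic.

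Let $K = \bigcap_n \overline{U_n}$. This is a decreasing intersection of non-empty compact subsets of $\overline{U_0}$, hence a non-empty compact set, and the inclusions $\overline{U_{n+1}} \subset U_n \subset E_n$ force $K \subset U \cap \bigcap_n E_n$. Crucially, $K \cap R_n \subset \overline{U_n} \cap R_n = \emptyset$ for every $n$, so $K \cap R = \emptyset$. It then remains to show $\mathscr{M}_\mu^\theta(K) \geq \mathscr{M}_\mu^\theta(U) - \varepsilon$. Given any dyadic cover $\{\tilde D_j\}$ of $K$, the condition $K \cap R = \emptyset$ means each $x \in K$ lies in $\mathrm{int}(\tilde D_j)$ for the unique cube of the cover containing $x$, so $\{\mathrm{int}(\tilde D_j)\}$ is an open cover of the compact set $K$ admitting a finite subcover indexed by some $I$. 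The compact sets $\overline{U_N} \setminus \bigcup_{j \in I} \mathrm{int}(\tilde D_j)$ are decreasing in $N$ with empty intersection, so they are eventually empty; for such $N$ the cubes $\{\tilde D_j\}_{j \in I}$ form a dyadic cover of $U_N$, giving $\sum_j \mu(\tilde D_j)^\theta \geq \sum_{j \in I} \mu(\tilde D_j)^\theta \geq \mathscr{M}_\mu^\theta(U_N) \geq \mathscr{M}_\mu^\theta(U) - \varepsilon$. Letting $\varepsilon \to 0^+$ then concludes the argument.

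The principal obstacle I anticipate is the density of $R$ in $\mathbb{R}^d$: without extra care, any compact $K$ of positive $\mathscr{M}_\mu^\theta$-mass typically meets $R$, and if $K \cap R \neq \emptyset$ the half-open dyadic cover of $K$ cannot be upgraded to a genuine open cover, so the compactness step collapses. Inserting the removal of $R_n$ at stage $n$ of the construction---an operation invisible to $\mathscr{M}_\mu^\theta$ by Lemma~\ref{lem:opensets}---is exactly what makes $K$ disjoint from every $R_n$ and hence from $R$, salvaging the whole argument.
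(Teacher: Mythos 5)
Your proposal is correct and follows essentially the same route as the paper's proof: both inductively build nested open sets $U_n$ whose closures sit inside $E_n$ and avoid the dyadic grid $R_n$, use Lemma~\ref{lem:opensets}, the hypothesis, and the increasing sets lemma to keep $\mathscr{M}_\mu^\theta(U_n)$ within $\varepsilon$ of $\mathscr{M}_\mu^\theta(U)$, and then close the argument with a nested-compacta covering step. The only real difference is cosmetic and in your favour---you reduce upfront to bounded $U$ (via $U=\bigcup_j U\cap B(0,j)$ and Lemma~\ref{lem:increasingsets}) before invoking compactness of $\overline{U_0}$, a point the paper's proof passes over silently, and you use a summable budget $\varepsilon_n$ where the paper reuses a single $\varepsilon$.
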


\begin{proof}
  If $V$ is a set, we define
  \[
  V_{(-\delta)} = \{\, x \in V : \inf_{y\not \in V} |x-y| > \delta
  \,\}.
  \]

  Let $\varepsilon > 0$. We define a sequence of open sets $U_n$
  inductively. Put $U_0 = U$. Suppose that $U_{n-1}$ has been defined
  and satisfies
  \[
  \mathscr{M}_\mu^\theta (U_{n-1}) > \mathscr{M}_\mu^\theta (U) -
  \varepsilon.
  \]
  Put $U_n = (E_n \cap (U_{n-1} \setminus (R_n \cup R_{-n})
  ))_{(-\delta_n)}$. Then, by Lemma~\ref{lem:increasingsets}, we can
  choose $\delta_n$ so small that $\mathscr{M}_\mu^\theta (U_n)$ is as
  close to $\mathscr{M}_\mu^\theta (E_n \cap (U_{n-1} \setminus (R_n
  \cup R_{-n})))$ as we desire. But
  \begin{align*}
    \mathscr{M}_\mu^\theta (E_n \cap (U_{n-1} \setminus (R_n \cup
    R_{-n}))) & = \mathscr{M}_\mu^\theta (U_{n-1} \setminus (R_n \cup
    R_{-n})) \\ & =\mathscr{M}_\mu^\theta (U_{n-1}) >
    \mathscr{M}_\mu^\theta (U) - \varepsilon,
  \end{align*}
  holds by \eqref{eq:intersectionlemma}, Lemma~\ref{lem:opensets} and
  the assumption on $U_{k-1}$. Hence we can choose $\delta_n$ so small
  that
  \[
  \mathscr{M}_\mu^\theta (U_n) > \mathscr{M}_\mu^\theta (U) -
  \varepsilon.
  \]
  Hence by induction, we obtain a sequence $\{U_n\}_{n=0}^\infty$ such
  that
  \[
  \overline{U}_n \subset U \cap E_n \setminus (R_n \cup R_{-n}),
  \qquad \text{and} \qquad \mathscr{M}_\mu^\theta (U_n) >
  \mathscr{M}_\mu^\theta (U) - \varepsilon,
  \]
  holds for every $n$, from which it follows that
  \[
  \bigcap_{n=0}^\infty \overline{U}_n \subset U \cap
  \bigcap_{n=1}^\infty E_n \setminus R, \qquad \text{and} \qquad
  \mathscr{M}_\mu^\theta (U_n) > \mathscr{M}_\mu^\theta (U) -
  \varepsilon,
  \]
  holds for every $n$.

  Let $\{D_k\}_{k=1}^\infty$ be a cover of $U \cap
  \bigcap_{n=1}^\infty E_n$ by dyadic cubes. Let $V = \bigcup_k (D_k
  \setminus \partial D_k)$. The compact set $\bigcap_n \overline{U}_n$
  is contained in the open set $V$. Hence there exists an $n$ such
  that
  \[
  \overline{U}_n \subset V \subset \bigcup_{k = 1}^\infty D_k.
  \]
  Since
  \[
  \sum_{k=1}^\infty \mu (D_k)^\theta \geq \mathscr{M}_\mu^\theta
  (\overline{U}_n) \geq \mathscr{M}_\mu^\theta (U_n) >
  \mathscr{M}_\mu^\theta (U) - \varepsilon,
  \]
  this proves that
  \[
  \mathscr{M}_\mu^\theta \biggl(U \cap \bigcap_{n=1}^\infty E_n
  \biggr) \geq \mathscr{M}_\mu^\theta (U) - \varepsilon.
  \]
  As $\varepsilon > 0$ is arbitrary, this implies that
  \[
  \mathscr{M}_\mu^\theta \biggl(U \cap \bigcap_{n=1}^\infty E_n
  \biggr) = \mathscr{M}_\mu^\theta (U). \qedhere
  \]
\end{proof}

We can now prove Theorem~\ref{the:limsup} and \ref{the:intersection}.

\begin{proof}[Proof of Theorem~\ref{the:limsup}] 
  Clearly, $\limsup_n E_n$ is a $G_\delta$-set, so we only need to show
  that
  \[
  \mathscr{M}_\mu^\eta (\limsup_n E_n \cap D) \geq \mathscr{M}_\mu^\eta
  (D)
  \]
  holds for any $\eta < \theta$ and $D \in \mathscr{D}$.

  Let $F_n = \bigcup_{k > n} E_k$. Then, by Lemma~\ref{lem:removec},
  \[
  \mathscr{M}_\mu^\eta (F_n \cap D) = \mathscr{M}_\mu^\eta (D)
  \]
  holds for any $\eta < \theta$ and any $D \in \mathscr{D}$. Hence, by
  Lemma~\ref{lem:open}
  \[
  \mathscr{M}_\mu^\eta (F_n \cap U) \geq \mathscr{M}_\mu^\eta (U)
  \]
  holds for any $\eta < \theta$ and any open set $U$. Now,
  Lemma~\ref{lem:intersection} finishes the proof.
\end{proof}

\begin{proof}[Proof of Theorem~\ref{the:intersection}]
  Since each $E_n$ is a $G_\delta$-set, we can write $E_n$ as an
  intersection
  \[
  E_n = \bigcap_{k=1}^\infty U_{n,k},
  \]
  where each $U_{n,k}$ is an open set.

  Then $\mathscr{M}_\mu^\eta (D \cap U_{n,k}) = \mathscr{M}_\mu^\eta
  (D)$ holds for any $U_{n,k}$, $\eta < \theta$ and $D \in
  \mathscr{D}$. By Lemma~\ref{lem:open}, we have that
  $\mathscr{M}_\mu^\eta (U \cap U_{n,k}) = \mathscr{M}_\mu^\eta (U)$
  holds for any $U_{n,k}$, $\eta < \theta$ and open $U$.

  Now, Lemma~\ref{lem:intersection} implies that
  \[
  E = \bigcap_{n=1}^\infty \bigcap_{k=1}^\infty U_{n,k}
  \]
  satisfies $\mathscr{M}_\mu^\eta (E \cap U) = \mathscr{M}_\mu^\eta
  (U)$ for any open set and $\eta < \theta$. Since $E$ is a
  $G_\delta$-set this shows that $E$ belongs to
  $\mathscr{G}_\mu^\theta$.
\end{proof}

We end this section with the proof of Theorem~\ref{the:dimension}, but
before we do so, we give a remark on possible improvements.

\begin{remark} \label{rem:dimension}
  Put
  \[
  \Theta_n (s) = \bigcap_{k=n}^\infty \{\, x : \mu (B (x,r)) < r^s
    \text{ for } r = \sqrt{d} 2^{-k} \,\},
  \]
  and
  \begin{align*}
  \Theta (s) &= \bigcup_{n=1}^\infty \Theta_n (s)
  \\ &= \{\, x : \mu (B (x,r)) < r^s \text{ for all } r = \sqrt{d}
  2^{-n},\ n \in \mathbb{N},\ n> n_0 (x) \,\},
  \end{align*}

  Let $s < \ldimh \mu$ and $0 < \eta < \theta$.  Since $s < \ldimh \mu
  = \essinf \underline{d}_\mu$ we have $\mu (\Theta(s)^\complement) = 0$.  It
  follows that if $D \in \mathscr{D}$, then
  \[
  \mathscr{M}_\mu^\eta (D \cap \Theta (s)) =
  \mathscr{M}_\mu^\eta (D).
  \]
  The set $\Theta (s)$ is a $G_{\delta \sigma}$ set. If it were a
  $G_\delta$ set, we would conclude that $\Theta (s) \in
  \mathscr{G}_\mu^\theta$, and so $\Theta (s) \cap E \in
  \mathscr{G}_\mu^\theta$ whenever $E \in
  \mathscr{G}_\mu^\theta$. This would mean that
  \[
  \mathscr{M}_\mu^\eta (D \cap \Theta (s) \cap E) \geq
  \mathscr{M}_\mu^\eta (D) > 0
  \]
  if $\mu (D) > 0$. The increasing sets lemma would then give that
  \[
  \mathscr{M}_\mu^\eta (D \cap \Theta_{n_0} (s) \cap E) > 0
  \]
  if $n_0$ is large enough, and this in turns easily leads to the
  estimate that $\dimh (D \cap E) \geq \theta \ldimh \mu$, which is a
  stronger estimate than that in Theorem~\ref{the:dimension}. Indeed,
  if $\{D_k\}$ is a dyadic cover of $D \cap \Theta_{n_0} (s) \cap E$ such
  that each $D_k$ belongs to some $\mathscr{D}_n$ with $n \geq n_0$,
  then
  \[
  \sum |D_k|^{\eta s} \geq \sum \mu (D_k)^\eta \geq
  \mathscr{M}_\mu^\eta (D \cap \Theta_{n_0} (s) \cap E) > 0,
  \]
  which shows that $\dimh (D \cap \Theta_{n_0} (s) \cap E) \geq \eta
  s$.

  Even if $\Theta (s)$ is not a $G_\delta$ set, I still suspect that
  \[
  \mathscr{M}_\mu^\eta (D \cap \Theta_n (s) \cap E) > 0
  \]
  holds when $\mu (D) > 0$, but I have not been able to prove this. It
  would be interesting to know whether this is true or not, since if
  true, it leads to a stronger dimension estimate than that of
  Theorem~\ref{the:dimension}. More generally, is it true that if
  $\Gamma$ is a $G_{\delta \sigma}$ set of full $\mu$ measure and $E
  \in \mathscr{G}_\mu^\theta$, then $\mathscr{M}_\mu^\eta (\Gamma \cap
  E) > 0$? Is it enough that $\Gamma$ is a $G_{\delta \sigma}$ set of
  positive measure?

  In any case, it seems not to be a great disadvantage to only have
  the weaker dimension estimate of Theorem~\ref{the:dimension}, since
  in all applications (Section~\ref{sec:applications}) we are actually
  able to prove not only that the set $E$ under consideration belongs
  to $\mathscr{G}_\mu^\theta$, but also to $\mathscr{G}_\nu^\theta$,
  where $\nu = \mu |_{\Theta_n (s)}$ for $s < \udimh \mu$ and $n$ such
  that $\mu (\Theta_n (s)) > 0$. This is sufficient to conclude that
  $\dimh E \geq \theta \udimh \mu$.
\end{remark}

\begin{proof}[Proof of Theorem~\ref{the:dimension}]
  The first estimate of $\dimh (E \cap D)$ is trivial. Suppose $\mu
  (D) > 0$ and let $\{D_k\}$ be a dyadic cover of $E \cap D$. Then
  \[
  \sum |D_k|^{\theta t} \geq c \sum \mu (D_k)^\theta \geq c
  \mathscr{M}_{\mu}^\theta (E \cap D) > 0,
  \]
  which shows that $\mathscr{N}^{\theta t} (E \cap D) > 0$ (the
  Hausdorff net-measure) and hence $\dimh (E \cap D) \geq \theta t$.

  Take $\eta < \theta$. Let $s_0$ and $\varepsilon_0$ be such that
  $\theta s - G_\mu (s) > \varepsilon_0 > 0$ for all $s \in [t,s_0]$,
  and $D$ such that $\mu (D) > 0$. It is sufficient to prove that
  $\dimh (E \cap D) \geq s_0 \theta$.

  Take $\varepsilon > 0$ and $\delta_0 > 0$. There is then a $\delta =
  \delta (s) > 0$ such that $\delta < \delta_0$ and
  \[
  \limsup_{r \to 0} \frac{\log (N(s+\delta, r) - N(s-\delta, r))}{-
    \log r} \leq G_\mu (s) + \frac{1}{2} \varepsilon,
  \]
  and hence also an $r_0 = r_0 (s) > 0$ such that
  \[
  \frac{\log (N(s+\delta, r) - N(s-\delta, r))}{-
    \log r} \leq G_\mu (s) + \varepsilon, \qquad r < r_0.
  \]
  Hence we have
  \begin{equation} \label{eq:numberofcubes}
    N(s+\delta, r) - N(s-\delta, r) \leq r^{G_\mu (s) + \varepsilon}. 
  \end{equation}

  By compactness, there are $t \leq s_1 < s_2 < \cdots < s_n \leq s_0$
  such that the balls $B(s_k, \delta(s_k))$, $k = 1, 2, \ldots, n$
  cover $[t,d]$. Let $\delta = \max \{\delta (s_1), \delta(s_2),
  \ldots, \delta(s_n)\}$ and $r_0 = \min \{r_0 (s_1), r_0(s_2),
  \ldots, r_0(s_n)\}$. Then $\delta < \delta_0$ since all $\delta (s)
  < \delta_0$.

  By choosing $\varepsilon$ and $\delta_0$ sufficiently small, we can
  achieve that
  \[
  \varepsilon + \theta \delta < \frac{1}{2} \varepsilon_0,
  \]
  where $\varepsilon_0$ has been chosen above to satisfy $\theta s -
  G_\mu (s) > \varepsilon_0$ for $s \in [t,s_0]$.
  
  The set $E \cap D$ is a bounded set with $\mathscr{M}_\mu^\eta (E
  \cap D) > 0$. Let $\{D_k\}$ be a cover of $E \cap D$ by dyadic cubes
  belonging to $\bigcup_{l = L}^\infty \mathscr{D}_l$, where we will
  take $L$ to be large. We group the dyadic cubes covering $E \cap D$
  according to their sizes, letting
  \[
  \mathscr{C}_l = \mathscr{D}_l \cap \{D_k\},
  \]
  each $\mathscr{C}_l$ being finite. Each $\mathscr{C}_l$ is then
  further divided into collections $\mathscr{C}_l^+ (k)$ and
  $\mathscr{C}_l^-$, defined by
  \[
    \mathscr{C}_l^+ (k) = \biggl\{\, C \in \mathscr{C}_l \setminus
    \bigcup_{j < k} \mathscr{C}_l^+ (j) : 2^{-(s_k+\delta (s_k))l} \leq
    \mu (C) \leq 2^{-(s_k-\delta (s_k))l} \,\biggr\},
  \]
  for $k = 1,2,\ldots,n$, and
  \[
  \mathscr{C}_l^- = \mathscr{C}_l \setminus \bigcup_{k=1}^n
  \mathscr{C}_l^+ (k).
  \]
  The condition $C \in \mathscr{C}_l \setminus \bigcup_{j < k}
  \mathscr{C}_l^+ (j)$ in the definition of $\mathscr{C}_l^+ (k)$ is
  there to ensure that every $C$ is in at most one $\mathscr{C}_l^+
  (k)$. Since the balls $B (s_k, \delta(s_k))$ that cover $[t,d]$ will
  necessarily overlap, it could otherwise be possible that some $C$
  belongs to more than one $\mathscr{C}_l^+ (k)$.

  By \eqref{eq:numberofcubes}, we have
  \[
  \# \mathscr{C}_l^+ (k) \leq 2^{(G_{\mu} (s_k) + \varepsilon)l}.
  \]
  This implies that
  \begin{align*}
    \sum_{l=L}^\infty \sum_{k=1}^n \sum_{C \in \mathscr{C}_l^+ (k)}
    \mu (C)^\eta &\leq \sum_{l=L}^\infty \sum_{k=1}^n 2^{(G_{\mu}
      (s_k) + \varepsilon)l} 2^{-\eta (s_k-\delta) l} \\ & =
    \sum_{l=L}^\infty \sum_{k=1}^n 2^{(G_\mu (s_k) - \eta s_k +
      \varepsilon + \eta \delta) l} \\ &\leq \sum_{l=L}^\infty n
    2^{-\frac{1}{2} \varepsilon_0 l} \leq \frac{1}{2}
    \mathscr{M}_\mu^\eta (E \cap D),
  \end{align*}
  if $L$ is large enough.

  If $C \in \mathscr{C}_l^-$, then $\mu (C) < 2^{-(s_n+\delta)l} <
  2^{-(s_0-\delta)l}$. Hence, with $s = s_0 - \delta$, we have
  \begin{multline*}
    \sum_k |D_k|^{s \eta} \geq \sum_l \sum_{C \in \mathscr{C}_l^-}
    |C|^{s \eta} \geq \sum_l \sum_{C \in \mathscr{C}_l^-} \mu
    (C)^\eta \\ = \sum_k \mu(D_k)^\eta - \sum_l \sum_k \sum_{C \in
      \mathscr{C}_l^+ (k)} \mu (C)^\eta \geq \frac{1}{2}
    \mathscr{M}_\mu^\eta (E \cap D).
  \end{multline*}
  This proves that $\mathscr{N}^{s \eta} (E \cap D) > 0$. Hence
  $\mathscr{H}^{s \eta} (E \cap D) > 0$ and $\dimh (E \cap D) \geq s
  \eta = (s_0 - \delta) \eta$. As $\delta$ can be made as small as we
  please and $\eta$ as close to $\theta$ as we like, this finishes the
  proof.
\end{proof}

\section{Proof of Lemma~\ref{lem:frostman}} \label{sec:inhomogeneousproofs}

The goal of this section is to prove Lemma~\ref{lem:frostman}.

Throughout this section, we assume that $\mu$ is a non-atomic and
locally finite Borel measure, with $\mu (R) = 0$. We start by proving
the following lemma, which is an important step towards the proof of
Lemma~\ref{lem:frostman}.

\begin{lemma} \label{lem:frostman2}
  Suppose that $E_n$ are open sets, and $\mu_n$ are Borel measures
  with $\supp \mu_n \subset E_n$. If there is a constant $C > 0$, such
  that for any $D \in \mathscr{D}$ holds
  \begin{equation} \label{eq:energycondition}
    \liminf_{n \to \infty} \int_D \biggl(\int_D \mu (Q
    (x,y))^{-\theta} \, \mathrm{d} \mu_n (y) \biggr)^{-1} \,
    \mathrm{d} \mu_n (x) \geq \frac{\mu (D)^\theta}{C},
  \end{equation}
  then $\limsup_n E_n \in \mathscr{G}_\mu^\theta$.
\end{lemma}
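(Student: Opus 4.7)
The plan is to adapt the Frostman-style construction from Lemma~\ref{lem:classical} to the inhomogeneous kernel $\mu(Q(x,y))^{-\theta}$, producing for each $n$ and each $D \in \mathscr{D}$ an auxiliary measure on $D \cap E_n$ whose mass on every dyadic cube is controlled from above. Concretely, I would set
\[
\nu_n^D(A) = \int_A \biggl( \int_D \mu(Q(x,y))^{-\theta} \, \mathrm{d}\mu_n(y) \biggr)^{-1} \mathrm{d}\mu_n(x);
\]
this is supported in $D \cap E_n$ because $\supp \mu_n \subset E_n$, and hypothesis \eqref{eq:energycondition} translates precisely into $\liminf_n \nu_n^D(D) \geq \mu(D)^\theta / C$.

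The heart of the argument is to show that $\nu_n^D(D') \leq \mu(D')^\theta$ for every $D' \in \mathscr{D}$. For $D' \subset D$, I would first shrink the inner domain of integration from $D$ to $D'$ (which only decreases the integral) and then, for $x \in D'$, apply Jensen's inequality to the convex function $t \mapsto 1/t$ and the probability measure obtained by normalising $\mu_n$ on $D'$:
\[
\int_{D'} \mu(Q(x,y))^{-\theta} \, \mathrm{d}\mu_n(y) \geq \frac{\mu_n(D')^2}{\int_{D'} \mu(Q(x,y))^{\theta} \, \mathrm{d}\mu_n(y)} \geq \frac{\mu_n(D')}{\mu(D')^\theta},
\]
the last inequality using that $Q(x,y) \subset D'$ whenever both $x,y \in D'$, so $\mu(Q(x,y)) \leq \mu(D')$. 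Taking reciprocals and integrating $x$ over $D'$ yields $\nu_n^D(D') \leq \mu(D')^\theta$. For the remaining dyadic cubes the bound is immediate, since two dyadic cubes are either disjoint or nested: if $D' \cap D = \emptyset$ the mass is zero, and if $D \subset D'$ then $\nu_n^D(D') = \nu_n^D(D) \leq \mu(D)^\theta \leq \mu(D')^\theta$ by monotonicity and $\theta \leq 1$.

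With these bounds in place, any dyadic cover $\{D_k\}$ of $E_n \cap D$ satisfies
\[
\sum_k \mu(D_k)^\theta \geq \sum_k \nu_n^D(D_k) \geq \nu_n^D(D),
\]
so $\mathscr{M}_\mu^\theta(E_n \cap D) \geq \nu_n^D(D)$, and the hypothesis gives $\liminf_n \mathscr{M}_\mu^\theta(E_n \cap D) \geq \mu(D)^\theta / C$ for every $D$. From here I would follow the scheme of the proof of Theorem~\ref{the:limsup}: set $F_n = \bigcup_{k>n} E_k$ so that $\mathscr{M}_\mu^\theta(F_n \cap D) \geq \mu(D)^\theta/C$ uniformly in $D$; apply Lemma~\ref{lem:removec} (with parameter $\theta$) to bootstrap this to $\mathscr{M}_\mu^\eta(F_n \cap D) = \mu(D)^\eta$ for every $\eta < \theta$; extend to arbitrary open sets via Lemma~\ref{lem:open}; and finally apply Lemma~\ref{lem:intersection} to the decreasing sequence $\{F_n\}$ to get $\mathscr{M}_\mu^\eta(U \cap \limsup_n E_n) = \mathscr{M}_\mu^\eta(U)$ for every open $U$. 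Since $\limsup_n E_n = \bigcap_n F_n$ is a $G_\delta$-set, this places it in $\mathscr{G}_\mu^\theta$.

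The main obstacle I expect is the Jensen step: one must track the direction of the inequality carefully and use the restriction to $D'$ in the inner integration to guarantee an upper bound on $\mu(Q(x,y))$, since otherwise $Q(x,y)$ could be as large as $\mathbb{R}^d$ and the estimate $\mu(Q(x,y)) \leq \mu(D')$ would fail. Once that key bound is secured, everything else amounts to invoking the structural lemmata already established in Section~\ref{sec:propertiesproofs}.
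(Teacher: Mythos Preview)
Your proposal is correct and follows essentially the same approach as the paper: both define the auxiliary measure $\nu_n^D(A)=\int_A\bigl(\int_D \mu(Q(x,y))^{-\theta}\,\mathrm{d}\mu_n(y)\bigr)^{-1}\mathrm{d}\mu_n(x)$, use the shrink-the-inner-domain plus Jensen argument to get $\nu_n^D(D')\le\mu(D')^\theta$, and then deduce $\liminf_n\mathscr{M}_\mu^\theta(E_n\cap D)\ge \mu(D)^\theta/C$. The only cosmetic difference is that the paper normalises $\nu_n^D$ to a probability measure and then invokes Theorem~\ref{the:limsup} directly, whereas you keep the unnormalised version and spell out the chain Lemma~\ref{lem:removec} $\to$ Lemma~\ref{lem:open} $\to$ Lemma~\ref{lem:intersection}, which is exactly the content of that theorem.
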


\begin{proof}
  Let $D \in \mathscr{D}$ and suppose that $\mu (D) > 0$. We consider
  \[
  K_\theta (x) = U_\mu^\theta (\mu_n|_D) (x) = \int_D \mu
  (Q(x,y))^{-\theta} \, \mathrm{d} \mu_n (y).
  \]
  We let $\nu$ be a probability measure with support in $D$ and
  defined by
  \[
  \nu (A) = \frac{\int_A K_\theta^{-1} \, \mathrm{d} \mu_n}{\int_D
    K_\theta^{-1} \, \mathrm{d} \mu_n}, \qquad A \subset D.
  \]

  We prove that if $n$ is large enough, then, for any $A \subset D$
  with $A \in \mathscr{D}$, we have
  \begin{equation} \label{eq:measureineq}
    \nu (A) \leq 2 C \biggl( \frac{\mu (A)}{\mu (D)} \biggr)^\theta.
  \end{equation}

  If $n$ is large enough, then the denominator in the expression
  defining $\nu$ satisfies
  \[
  \int_D K_\theta^{-1} \, \mathrm{d} \mu_n \geq
  \frac{\mu(D)^\theta}{2C}
  \]
  by \eqref{eq:energycondition}.

  By Jensen's inequality, we have (compare with the proof of
  Lemma~\ref{lem:classical})
  \begin{align*}
    \int_A K_\theta^{-1} \, \mathrm{d} \mu_n & = \int_A \biggl( \int_D
    \mu (Q(x,y))^{-\theta} \, \mathrm{d} \mu_n (y) \biggr)^{-1} \,
    \mathrm{d} \mu_n (x) \\ &\leq \int_A \biggl( \int_A \mu
    (Q(x,y))^{-\theta} \, \frac{\mathrm{d} \mu_n (y)}{\mu_n (A)}
    \biggr)^{-1} \, \frac{\mathrm{d} \mu_n (x)}{\mu_n (A)} \\ &\leq
    \int_A \biggl( \int_A \mu (Q(x,y))^{\theta} \, \frac{\mathrm{d}
      \mu_n (y)}{\mu_n (A)} \biggr) \, \frac{\mathrm{d} \mu_n
      (x)}{\mu_n (A)} \\ &\leq \mu (A)^\theta.
  \end{align*}
  This shows that \eqref{eq:measureineq} holds when $n$ is large.

  Suppose now that $\{D_k\}$ is a cover of $E_n \cap D$ by dyadic
  cubes and that $n$ is large. Then
  \[
  1 = \nu \biggl( \bigcup_k D_k \biggr) \leq \sum_k \nu (D_k) = \sum_k
  \nu (D_k \cap D) \leq \sum_k 2 C \biggl(\frac{\mu (D_k \cap
    D)}{\mu (D)} \biggr)^\theta.
  \]
  Hence, if all $D_k$ are subsets of $D$, then
  \[
  \sum_k \mu(D_k)^\theta \geq \frac{\mu (D)^\theta}{2 C},
  \]
  and so $\mathscr{M}_\mu^\theta (E_n \cap D) \geq (2C)^{-1} \mu
  (D)^\theta$.

  If $D \in \mathscr{D}$ with $\mu (D) = 0$, then
  $\mathscr{M}_\mu^\theta (E_n \cap D) \geq (2C)^{-1} \mu (D)^\theta$ is
  trivially satisfied.

  We have thus showed that
  \[
  \liminf_{n \to \infty} \mathscr{M}_\mu^\theta (E_n \cap D) \geq
  (2C)^{-1} \mu (D)^\theta, \qquad D \in \mathscr{D}.
  \]
  Now, Theorem~\ref{the:limsup} finishes the proof.
\end{proof}

\begin{lemma} \label{lem:energyestimate}
  If $\mu$ is a finite and non-atomic Borel measure and $D\in
  \mathscr{D}$, then
  \[
  \mu(D)^{1-\theta} \leq \int_D \mu (Q(x,y))^{-\theta} \, \mathrm{d}
  \mu (y) \leq \frac{\mu(D)^{1 - \theta}}{1 - \theta},
  \]
  for $x \in D$, and in particular
  \[
  (1 - \theta) \mu (D)^\theta \leq \int_D \biggl( \int_D \mu
  (Q(x,y))^{-\theta} \, \mathrm{d} \mu (y) \biggr)^{-1} \, \mathrm{d}
  \mu (x) \leq \mu (D)^\theta,
  \]
  and
  \[
  \mu(D)^{2-\theta} \leq \int_D \int_D \mu(Q(x,y))^{-\theta} \,
  \mathrm{d} \mu(x) \mathrm{d} \mu (y) \leq \frac{\mu(D)^{2-\theta}}{1
    - \theta}.
  \]
\end{lemma}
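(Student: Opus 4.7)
The plan is to reduce everything to the first two-sided inequality; the other two follow by monotonicity and Fubini. The key observation is that for a fixed $x\in D$, the function $y\mapsto Q(x,y)$ takes only countably many values, and these can be enumerated via the filtration of dyadic cubes through $x$.

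Write $D=D_{n_0}(x)$ where $D_n(x)\in\mathscr{D}_n$ is the dyadic cube of level $n$ containing $x$, and set $a_n=\mu(D_n(x))$ for $n\geq n_0$. Since $\mu$ is non-atomic, $a_n\to 0$. For any $y\in D_n(x)\setminus D_{n+1}(x)$, the minimal dyadic cube containing both $x$ and $y$ is exactly $D_n(x)$, so $Q(x,y)=D_n(x)$. Decomposing $D\setminus\{x\}$ into the disjoint annuli $D_n(x)\setminus D_{n+1}(x)$, I obtain
\[
\int_D \mu(Q(x,y))^{-\theta}\,\mathrm{d}\mu(y)=\sum_{n=n_0}^\infty (a_n-a_{n+1})\,a_n^{-\theta}.
\]
The lower bound $\mu(D)^{1-\theta}$ is immediate: $Q(x,y)\subset D$, so $\mu(Q(x,y))^{-\theta}\geq \mu(D)^{-\theta}$ pointwise, and integration gives the claim. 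For the upper bound, compare the sum with an integral: on $[a_{n+1},a_n]$ the function $t\mapsto t^{-\theta}$ is at least $a_n^{-\theta}$, hence $(a_n-a_{n+1})a_n^{-\theta}\leq \int_{a_{n+1}}^{a_n} t^{-\theta}\,\mathrm{d}t$. Summing telescopes to
\[
\sum_{n=n_0}^\infty (a_n-a_{n+1})\,a_n^{-\theta}\leq \int_0^{a_{n_0}} t^{-\theta}\,\mathrm{d}t=\frac{\mu(D)^{1-\theta}}{1-\theta}.
\]

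Given the two-sided bound on the inner integral, the second assertion is a matter of taking reciprocals — that is, $(1-\theta)\mu(D)^{\theta-1}\leq(\int_D\mu(Q(x,y))^{-\theta}\,\mathrm{d}\mu(y))^{-1}\leq\mu(D)^{\theta-1}$ — and integrating with respect to $\mu(x)$ over $D$, which contributes a factor of $\mu(D)$. The third assertion is obtained simply by integrating the first inequality in $x$ over $D$.

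The only genuinely non-trivial step is recognising that the integrand $\mu(Q(x,y))^{-\theta}$ is constant on each dyadic annulus $D_n(x)\setminus D_{n+1}(x)$ around $x$, so that the integral becomes a telescoping sum in the masses $a_n$; once this is in place the estimates reduce to bounding a sum by the integral $\int_0^{\mu(D)} t^{-\theta}\,\mathrm{d}t$, which converges precisely because $\theta<1$. Everything else is routine.
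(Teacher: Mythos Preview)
Your proof is correct and follows essentially the same approach as the paper: both reduce the inner integral to the dyadic filtration $D_n(x)$ through $x$ and bound by comparison with $\int_0^{\mu(D)} t^{-\theta}\,\mathrm{d}t$. The only cosmetic difference is that the paper uses the layer-cake (Fubini) representation $\int_1^\infty \mu(D_{n(u)})\,\mathrm{d}u$ and bounds the tail by $u^{-1/\theta}$, whereas you sum directly over the annuli $D_n(x)\setminus D_{n+1}(x)$ and compare the resulting Riemann--Stieltjes sum to the same integral.
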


\begin{proof}
  We may assume that $\mu$ is a probability measure on $D$. By
  Fubini's theorem we can write
  \[
  \int_D \mu (Q(x,y))^{-\theta} \, \mathrm{d} \mu (y) = 1 +
  \int_1^\infty \mu (D_{n(u)}) \, \mathrm{d} u,
  \]
  where $D_{n(u)} \in \mathscr{D}_{n(u)}$ is chosen such that $x \in
  D_{n(u)}$ and $\mu (D_{n(u)})^{-\theta} \geq u$, and $n (u)$ is
  chosen as small as possible. Hence we have
  \begin{align*}
    1 \leq \int_D \mu (Q(x,y))^{-\theta} \, \mathrm{d} \mu (y) &= 1 +
    \int_1^\infty \mu (D_{n(u)}) \, \mathrm{d} u \\ &\leq 1 +
    \int_1^\infty u^{-1/\theta} \, \mathrm{d} u = \frac{1}{1 -
      \theta}. \qedhere
  \end{align*}
\end{proof}

The following two results are variants of Lemma~2.2 and Corollary~2.3
of \cite{perssonreeve}, and proved in the same way.

\begin{lemma} \label{lem:Mm}
  Let $\mu$ be a Borel measure. Assume that for some $\theta \in (0,1)$,
  \[
  I_\mu^\theta (\mu) = \iint \mu (Q(x,y))^{-\theta} \,
  \mathrm{d}\mu (x) \mathrm{d} \mu (y) < \infty.
  \]
  Then, if $M_m = \{\, (x,y) : \mu (Q(x,y))^{-\theta} > m \,\}$, then
  we have, for $0 < \eta < \theta$, that
  \[
  \iint_{M_m} \mu (Q(x,y))^{-\eta} \, \mathrm{d} \mu (x) \mathrm{d}
  \mu (y) \leq I_\mu^\theta (\mu) \frac{\theta}{\theta - \eta}
  m^{\eta/\theta - 1}.
  \]
\end{lemma}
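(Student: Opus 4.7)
The plan is to reduce the integral to a distribution-function estimate. Set $f(x,y) = \mu(Q(x,y))^{-\theta}$, so that $\mu(Q(x,y))^{-\eta} = f^{\eta/\theta}$ and $M_m = \{f > m\}$. The hypothesis $I_\mu^\theta(\mu) < \infty$ is just the statement $\iint f \, d(\mu \times \mu) < \infty$, so Markov's inequality gives
\[
(\mu \times \mu)(\{f > s\}) \leq \frac{I_\mu^\theta(\mu)}{s}, \qquad s > 0.
\]
This will be the only input about $\mu$ and $Q$ that we need.

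Next, I would apply the layer-cake formula to the function $f^{\eta/\theta}$ restricted to $M_m$. Writing $\alpha = \eta/\theta \in (0,1)$, the key observation is that on $M_m$ the function $f^\alpha$ already exceeds $m^\alpha$, so
\[
\iint_{M_m} f^\alpha \, d(\mu \times \mu) = m^\alpha (\mu \times \mu)(M_m) + \int_{m^\alpha}^\infty (\mu \times \mu)(\{f^\alpha > t\}) \, dt,
\]
where in the second integral the condition $f^\alpha > t \geq m^\alpha$ automatically implies membership in $M_m$. Rewriting $\{f^\alpha > t\} = \{f > t^{1/\alpha}\}$ and applying the Markov bound turns both terms into explicit powers of $m$.

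Concretely, the first term is at most $m^{\alpha - 1} I_\mu^\theta(\mu)$, and for the second I would compute
\[
\int_{m^\alpha}^\infty t^{-1/\alpha} \, dt = \frac{\alpha}{1 - \alpha}\, m^{\alpha - 1},
\]
using $1/\alpha = \theta/\eta > 1$. Adding the two contributions gives
\[
I_\mu^\theta(\mu)\, m^{\alpha - 1}\left(1 + \frac{\alpha}{1 - \alpha}\right) = I_\mu^\theta(\mu)\, \frac{\theta}{\theta - \eta}\, m^{\eta/\theta - 1},
\]
which is precisely the claimed bound. There is no serious obstacle here; the only thing to be careful about is the splitting at the threshold $t = m^\alpha$ so that the distribution function on $M_m$ agrees with the distribution function on the whole space beyond this point, which is what makes the Markov estimate applicable uniformly.
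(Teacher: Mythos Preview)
Your proof is correct and is essentially identical to the paper's own argument: the paper also uses the Markov bound $(\mu\times\mu)(M_m)\le I_\mu^\theta(\mu)/m$, the same layer-cake splitting at the threshold $m^{\eta/\theta}$, and the same power-law integration to obtain the constant $\theta/(\theta-\eta)$. The only difference is notational (you introduce $f$ and $\alpha=\eta/\theta$), not substantive.
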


\begin{proof}
  The assumption implies that $\mu \times \mu (M_m) \leq I_\mu^\theta
  (\mu) /m$. It then follows that
  \begin{align*}
    \iint_{M_m} \mu (Q)^{-\eta} \, \mathrm{d}\mu \mathrm{d} \mu & =
    m^{\eta/\theta} \mu \times \mu (M_m) +
    \int_{m^{\eta/\theta}}^\infty \mu \times \mu (M_{u^{\theta/\eta}})
    \, \mathrm{d} u \\ &\leq I_\mu^\theta (\mu) m^{\eta/\theta - 1} +
    I_\mu^\theta (\mu) \int_{m^{\eta/\theta}}^\infty u^{-\theta/\eta}
    \, \mathrm{d}u \\ &= I_\mu^\theta (\mu) \frac{\theta}{\theta -
      \eta} m^{\eta/\theta - 1}. \qedhere
  \end{align*}
\end{proof}

\begin{corollary} \label{cor:energyconvergence}
  If $\mu_n$ are non-atomic Borel measures that converge weakly to a
  measure $\mu$, and if $\iint \mu (Q(x,y))^{-\theta} \, \mathrm{d}
  \mu_n (x) \mathrm{d} \mu_n (y)$ are uniformly bounded for some
  $\theta \in (0,1)$, then, for $0 < \eta < \theta$ and $D \in
  \mathscr{D}$ with $\mu (D) > 0$,
  \[
  \iint_{D \times D} \mu (Q(x,y))^{-\eta} \, \mathrm{d} \mu_n (x)
  \mathrm{d} \mu_n (y) \to \iint_{D \times D} \mu (Q(x,y))^{-\eta} \,
  \mathrm{d} \mu (x) \mathrm{d} \mu (y),
  \]
  as $n \to \infty$.
\end{corollary}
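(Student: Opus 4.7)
\emph{Strategy.} The plan is the usual truncate-and-take-limits scheme. Split the integrand at level $M$ into a bounded piece $K_M(x,y) = \min\{\mu(Q(x,y))^{-\eta},M\}$ and a tail $\mu(Q(x,y))^{-\eta} - K_M(x,y)$; treat the bounded piece via weak convergence and handle the tail by an argument of the same type as in the proof of Lemma~\ref{lem:Mm}, making it small uniformly in $n$ for $M$ large. The main technical point is identifying a set of full $(\mu\times\mu)$-measure on which the truncated kernel is continuous, so that a Portmanteau-type theorem applies.

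\emph{The bounded piece.} Weak convergence $\mu_n \to \mu$ implies weak convergence of the product measures $\mu_n\times\mu_n\to\mu\times\mu$ on $\mathbb{R}^d\times\mathbb{R}^d$. Since $\mu(R)=0$ and $D\in \mathscr{D}$, both $\partial(D\times D)$ and the exceptional set $(R\times\mathbb{R}^d)\cup(\mathbb{R}^d\times R)$ are $(\mu\times\mu)$-null. On the complement of the latter, the map $(x,y)\mapsto Q(x,y)$ is locally constant: if neither $x$ nor $y$ lies on a dyadic boundary, then a small enough perturbation leaves each of $x,y$ in the same dyadic cube at every level, so their minimal common cube is unchanged; at a diagonal point $(x_0,x_0)$ with $x_0\notin R$ the truncation absorbs the singularity because non-atomicity of $\mu$ forces $\mu(D_N(x_0))\to 0$ as $N\to\infty$, hence $\mu(Q(x,y))^{-\eta}\to\infty$ as $(x,y)\to(x_0,x_0)$, and $K_M$ extends continuously with value $M$. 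Therefore $\mathbf{1}_{D\times D}K_M$ is a bounded Borel function continuous $(\mu\times\mu)$-almost everywhere, and the Portmanteau theorem gives, for each fixed $M$,
\[
\iint_{D\times D} K_M \, \mathrm{d}\mu_n \mathrm{d}\mu_n \xrightarrow[n\to\infty]{} \iint_{D\times D} K_M \, \mathrm{d}\mu \mathrm{d}\mu.
\]

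\emph{The tail and conclusion.} Let $C$ be the uniform bound on $\iint \mu(Q)^{-\theta}\,\mathrm{d}\mu_n\mathrm{d}\mu_n$. Markov's inequality gives $(\mu_n\times\mu_n)\{\mu(Q)^{-\theta}>m\}\leq C/m$, and the tail-integration computation in the proof of Lemma~\ref{lem:Mm} carries over verbatim with $\mu_n\times\mu_n$ in place of $\mu\times\mu$, yielding
\[
\iint_{\{\mu(Q)^{-\eta}>M\}} \mu(Q)^{-\eta}\, \mathrm{d}\mu_n \mathrm{d}\mu_n \leq C\,\frac{\theta}{\theta-\eta}\,M^{1-\theta/\eta}.
\]
Since $1-\theta/\eta<0$ this tends to $0$ as $M\to\infty$, uniformly in $n$. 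The analogous bound with $\mu$ in place of $\mu_n$ follows by first applying the bounded-piece step above to $\min\{\mu(Q)^{-\theta},N\}$ and letting $N\to\infty$, giving $\iint \mu(Q)^{-\theta}\mathrm{d}\mu\mathrm{d}\mu \leq \liminf_n \iint \mu(Q)^{-\theta}\mathrm{d}\mu_n\mathrm{d}\mu_n \leq C$, after which Lemma~\ref{lem:Mm} applies directly to $\mu$. A standard three-$\varepsilon$ argument, choosing $M$ first to dominate the two tails and then $n$ to control the bounded middle term, completes the proof. The delicate part is the almost-everywhere continuity of $K_M$, which is precisely where the two hypotheses $\mu(R)=0$ and non-atomicity of $\mu$ are both used.
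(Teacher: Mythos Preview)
Your proof is correct and follows the same truncation strategy as the paper: control the tail uniformly in $n$ via the computation of Lemma~\ref{lem:Mm}, pass to the limit in the truncated piece by weak convergence, and combine the two with a three-$\varepsilon$ argument. The only difference is cosmetic: where you invoke Portmanteau through almost-everywhere continuity of $\mathbf{1}_{D\times D}K_M$, the paper instead notes that $\mu_n(C)\to\mu(C)$ for every dyadic cube $C$ (since $\mu(\partial C)=0$) and that the truncated kernel is piecewise constant on a finite partition of $D\times D$ into dyadic rectangles, so the truncated integral is a finite sum of terms $\mu_n(C_1)\mu_n(C_2)$ times constants.
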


\begin{proof}
  Let $\varepsilon > 0$, $0 < \eta < \theta$ and $D \in
  \mathscr{D}$ with $\mu (D) > 0$. We put
  \[
  M_m = \{\, (x,y) \in D \times D: \mu (Q(x,y))^{-\theta} > m \,\}.
  \]
  Since $\iint \mu (Q(x,y))^{-\theta} \, \mathrm{d} \mu_n (x)
  \mathrm{d} \mu_n (y)$ are uniformly bounded, the estimates of
  \[
  \iint_{M_m} \mu (Q (x,y))^{-\eta} \, \mathrm{d} \mu_n (x) \mathrm{d}
  \mu_n (y)
  \]
  provided by Lemma~\ref{lem:Mm} are uniform in $n$, and we can take $m$
  and $N$ so that
  \[
  \iint_{M_m} \mu (Q (x,y))^{-\eta} \, \mathrm{d} \mu_n (x) \mathrm{d}
  \mu_n (y) < \varepsilon
  \]
  holds for all $n > N$. We then have
  \begin{multline*}
    \iint_{D \times D} \mu (Q (x,y))^{-\eta} \, \mathrm{d}\mu_n (x)
    \mathrm{d}\mu_n (y) \\ \leq \varepsilon + \iint_{D \times D} \min \{
    \mu(Q (x,y))^{-\eta}, m^{\eta/\theta} \} \, \mathrm{d}\mu_n (x)
    \mathrm{d} \mu_n (y).
  \end{multline*}
  Since the measures $\mu_n$ converge weakly to $\mu$ and $\mu
  (\partial C) = 0$ for all $C \in \mathscr{D}$ we have that $\mu_n
  (C) \to \mu (C)$ for all $C \in \mathscr{D}$. Hence
  \begin{align*}
    \iint_{D \times D} \min \{ \mu(&Q (x,y))^{-\eta}, m^{\eta/\theta} \} \,
    \mathrm{d}\mu_n (x) \mathrm{d} \mu_n (y) \\ &\to \iint_{D \times D} \min \{
    \mu(Q (x,y))^{-\eta}, m^{\eta/\theta} \} \, \mathrm{d}\mu (x) \mathrm{d} \mu (y)
    \\ &\leq \iint_{D \times D} \mu(Q (x,y))^{-\eta} \, \mathrm{d} \mu (x)
    \mathrm{d} \mu (y),
  \end{align*}
  where the convergence holds because $\min \{ \cdots \}$ is a bounded
  function.  As $\varepsilon$ is arbitrary, this shows that
  \begin{multline*}
    \limsup_{n\to \infty} \iint_{D \times D} \mu (Q (x,y))^{-\eta} \,
    \mathrm{d} \mu_n (x) \mathrm{d} \mu_n (y) \\ \leq \iint_{D \times
      D} \mu(Q (x,y))^{-\eta} \, \mathrm{d}\mu (x) \mathrm{d} \mu (y).
  \end{multline*}

  The inequality
  \begin{multline*}
    \liminf_{n\to \infty} \iint_{D \times D} \mu (Q (x,y))^{-\eta} \,
    \mathrm{d} \mu_n (x) \mathrm{d} \mu_n (y) \\ \geq \iint_{D \times
      D} \mu(Q (x,y))^{-\eta} \, \mathrm{d}\mu (x) \mathrm{d} \mu (y)
  \end{multline*}
  is trivial since
  \begin{align*}
  \iint_{D \times D} & \mu (Q (x,y))^{-\eta} \, \mathrm{d} \mu_n (x)
  \mathrm{d} \mu_n (y) \\ & \geq \iint_{D \times D} \min \{ \mu (Q
  (x,y))^{-\eta}, m \} \, \mathrm{d} \mu_n (x) \mathrm{d} \mu_n (y)
  \\ &\to \iint_{D \times D} \min \{ \mu (Q (x,y))^{-\eta}, m \} \,
  \mathrm{d} \mu (x) \mathrm{d} \mu (y), && n \to \infty \\ &\to
  \iint_{D \times D} \mu (Q (x,y))^{-\eta} \, \mathrm{d} \mu (x)
  \mathrm{d} \mu (y), && m \to \infty,
  \end{align*}
  and finishes the proof.
\end{proof}

We can now give the proof of Lemma~\ref{lem:frostman}.

\begin{proof}[Proof of Lemma~\ref{lem:frostman}]
  By Corollary~\ref{cor:energyconvergence} we have for any $D \in
  \mathscr{D}$ that
  \[
  \iint\limits_{D \times D} \mu (Q(x,y))^{-\eta} \, \mathrm{d} \mu_n
  (x) \mathrm{d} \mu_n (y) \to \iint\limits_{D \times D} \mu
  (Q(x,y))^{-\eta} \, \mathrm{d} \mu (x) \mathrm{d} \mu (y) \leq
  c_\eta,
  \]
  as $n \to \infty$.

  Since
  \begin{multline*}
    \int_D \biggl( \int_D \mu (Q(x,y))^{-\eta} \, \mathrm{d} \mu_n
    \biggr)^{-1} \, \mathrm{d} \mu_n (y) \\ \geq \biggl( \int_D \int_D
    \mu(Q(x,y))^{-\eta} \, \frac{\mathrm{d} \mu_n (x)}{\mu_n (D)}
    \frac{\mathrm{d} \mu_n (y)}{\mu_n (D)} \biggr)^{-1},
  \end{multline*}
  we therefore have by Lemma~\ref{lem:energyestimate} that
  \[
  \liminf_{n \to \infty} \int_D \biggl( \int_D \mu (Q(x,y))^{-\eta}
  \, \mathrm{d} \mu_n \biggr)^{-1} \, \mathrm{d} \mu_n (y)
  \geq (1- \eta) \mu (D)^\eta.
  \]
  Lemma~\ref{lem:frostman2} then implies that $\limsup_n E_n \in
  \mathscr{G}_\mu^\eta$. As this holds for all $\eta < \theta$, we
  conclude that $\limsup_n E_n \in \mathscr{G}_\mu^\theta$.
\end{proof}

\section{Proof of Theorem~\ref{the:randomlimsup}} \label{sec:randomproof}

In this section, we will prove Theorem~\ref{the:randomlimsup}.  We
assume that $\mu$ is a Borel probability measure on $\mathbb{R}^d$ and
consider the random sequence of points $(x_n)_{n=1}^\infty$, where the
points $x_n$ are independent and distributed according to the measure
$\mu$. Recall that we are considering the random set
\[
E_\alpha = \limsup_n B_n
\]
where $B_n = B(x_n,n^{-\alpha})$ and that
\[
s = s(\mu) = \lim_{\varepsilon \to 0} \sup \{\, t : G_\mu (t) \geq t
- \varepsilon \,\}.
\]

\subsection{Definition of the measure $\boldsymbol{\nu}$}

We will use the theory developed so far in this paper. Since $\mu$ is
a probability measure we can choose the point $P$ which defines the
dyadic cubes in such a way that $\mu (R) = 0$.

We will prove that
\begin{equation*} 
  E_\alpha \in \mathscr{G}_\mu^\theta, \qquad \theta = \frac{1}{s
    \alpha}
\end{equation*}
holds almost surely. To do so, we shall pick a set $C$, and first
prove that $E_\alpha \in \mathscr{G}_\nu^\theta$ with $\nu =
\frac{\mu|_C}{\mu (C)}$. Using a limit argument, we will then conclude
that $E_\alpha \in \mathscr{G}_\mu^\theta$ holds almost surely. The
dimension estimate will follow from Theorem~\ref{the:dimension} and
the fact that $E_\alpha \in \mathscr{G}_\nu^\theta$, and is a stronger
estimate than we would have got from Theorem~\ref{the:dimension} using
only that $E_\alpha \in \mathscr{G}_\mu^\theta$.

We will assume that $\mu$ has compact support. This is no restriction,
since when proving that $E_\alpha \in \mathscr{G}_\mu^\theta$ and
$E_\alpha \in \mathscr{G}_\nu^\theta$, we may consider instead the
measure restricted to dyadic cubes.

Let $\varepsilon > 0$. Take $t_0 = \sup \{\, t : G_\mu (t) \geq t - 2
\varepsilon \,\}$ and let $G = \sup \{\, G_\mu (t) : t \geq t_0
\,\}$. Both $t_0$ and $G$ are finite, since $G_\mu$ is a bounded
function.

For each $n$, let $A_n$ be the union of those cubes of $\mathscr{D}_n$
which have $\mu$-measure at most $2^{- n t_0}$, that is
\[
A_n = \bigcup_{\substack{D \in \mathscr{D}_n \\ \mu (D) \leq 2^{-n
      t_0}}} D.
\]

\begin{lemma} \label{lem:t0}
  There is a number $n_0$ such that $\mu (A_n) \leq 2^{- \varepsilon
    n}$ for $n > n_0$.
\end{lemma}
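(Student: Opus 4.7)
The plan is to partition the cubes making up $A_n$ into finitely many ``dimension layers'' controlled by the coarse multifractal spectrum $G_\mu$, plus a tail of cubes with exceptionally small mass that is handled by a trivial count. Write $r = 2^{-n}$. Since $\mu$ has compact support (as assumed just before the lemma), at most $C r^{-d}$ dyadic cubes of $\mathscr{D}_n$ intersect $\supp \mu$. Fix $S > d + \varepsilon$; the cubes with $\mu(D) \leq r^S$ together carry mass at most $C r^{S-d} \leq 2^{-n\varepsilon/2}$ for $n$ large, so it remains to bound the contribution of those cubes whose measure lies in $(r^S, r^{t_0}]$.

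For such cubes, $\mu(D) = r^s$ for some $s \in [t_0, S]$, and by definition of $t_0$ every $s > t_0$ satisfies $G_\mu(s) < s - 2\varepsilon$. Fix small $\delta_*, \varepsilon_1 > 0$ with $\delta_* + \varepsilon_1 < \varepsilon$. For each $s > t_0$, the definition of $G_\mu$ (together with the fact that shrinking the window $\delta$ only tightens the bound) produces $\delta(s) \in (0, \delta_*]$ and $r_0(s) > 0$ such that
\[
N(s + \delta(s), r) - N(s - \delta(s), r) \leq r^{-G_\mu(s) - \varepsilon_1} \qquad \text{for all } r < r_0(s).
\]
I would first pick $s_1 \in (t_0, t_0+\delta_*]$ with $\delta(s_1) \geq s_1 - t_0$, then by compactness of $[t_0, S]$ select finitely many further $s_2, \dots, s_K > t_0$ so that the open intervals $(s_k - \delta(s_k), s_k + \delta(s_k))$ cover $[t_0, S]$, and put $r_* = \min_k r_0(s_k)$.

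For $n$ so large that $r < r_*$, the cubes to be controlled lie in the union of the layers
\[
L_k = \{ D \in \mathscr{D}_n : \mu(D) \in [r^{s_k+\delta(s_k)}, r^{s_k-\delta(s_k)}) \},
\]
and combining $\# L_k \leq r^{-G_\mu(s_k) - \varepsilon_1}$ with the per-cube mass bound $r^{s_k - \delta(s_k)}$ yields
\[
\mu(L_k) \leq r^{s_k - \delta(s_k) - G_\mu(s_k) - \varepsilon_1} \leq r^{2\varepsilon - \delta(s_k) - \varepsilon_1} \leq r^{\varepsilon + \kappa}
\]
for some $\kappa > 0$, using $G_\mu(s_k) < s_k - 2\varepsilon$. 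Summing over the fixed number $K$ of layers and adding the tail then gives $\mu(A_n) \leq K \cdot r^{\varepsilon+\kappa} + 2^{-n\varepsilon/2} \leq 2^{-n\varepsilon}$ once $n$ is large enough.

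I expect the delicate point to be the compactness step combined with the requirement $s_k > t_0$: the leftmost $s_1$ must exceed $t_0$ (otherwise $G_\mu(s_1) < s_1 - 2\varepsilon$ may fail), yet its $\delta$-neighbourhood must still reach down to $t_0$. This forces $\delta(s_1) \geq s_1 - t_0$ and hence $s_1 - t_0 \leq \delta_* < \varepsilon$, so one has to choose $s_1$ close to $t_0$ and check that this choice is compatible with the bound supplied by the definition of $G_\mu$. A minor secondary issue is that $N$ counts axis-aligned cubes that are not shifted by the point $P$ used to define $\mathscr{D}_n$, but this only affects counts by a factor of at most $2^d$, which is absorbed into $\varepsilon_1$.
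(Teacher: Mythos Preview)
Your approach is essentially the same as the paper's: both cover the exponent range $[t_0,\,d+O(\varepsilon)]$ by finitely many windows via compactness, bound each layer using $G_\mu(s)<s-2\varepsilon$ for $s>t_0$, and dispose of the tail of very small cubes by the trivial count $\leq C\,2^{dn}$ coming from compact support. You are in fact more explicit than the paper about the two genuinely delicate points---the need for the leftmost window, centred at some $s_1>t_0$, to reach down to $t_0$, and the mismatch between the unshifted grid defining $N(\cdot,r)$ and the shifted dyadic grid $\mathscr{D}_n$---both of which the paper passes over silently.
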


\begin{proof}
  Let $t \geq t_0$ and $\delta_0 > 0$. There is a $\delta (t) \in (0,
  \delta_0)$ such that
  \[
  \limsup_{r \to 0} \frac{\log (N(t + \delta (t), r) - N(t - \delta
    (t), r))}{- \log r} \leq G_\mu (t) + \frac{1}{4} \varepsilon,
  \]
  and hence there is also an $r_0 (t)$ such that
  \[
  \frac{\log (N(t + \delta (t), r) - N(t - \delta (t), r))}{- \log r}
  \leq G_\mu (t) + \frac{1}{2} \varepsilon, \qquad r < r_0 (t).
  \]
  Hence we have
  \[
  N(t + \delta (t), r) - N(t - \delta (t), r) \leq r^{-(G_\mu (t) +
    \frac{1}{2} \varepsilon)}, \qquad r < r_0 (t).
  \]
  By compactness, there are $t_0 \leq t_1 \leq \cdots \leq t_p \leq d
  + 2 \varepsilon$ such that the intervals $B(t_k, \delta (t_k))$, $1
  \leq k \leq p$ cover $[t_0, d + 2 \varepsilon]$.  Let $\delta = \max
  \{\delta (t_1), \ldots \delta (t_p) \} < \delta_0$ and $r_0 = \min
  \{ r_0 (t_1), \ldots, r_0 (t_p) \}$.

  Those cubes of $\mathscr{D}_n$ that have positive measure which is
  at most $2^{-(d + 2 \varepsilon) n}$ are at most $c 2^{dn}$, where $c$
  is a constant, since there are at most $c 2^{dn}$ cubes of
  $\mathscr{D}_n$ with positive measure (since the support is
  compact).

  Take $N$ such that $2^{-N} < r_0$. We then have for $n > N$
  that
  \[
  \mu (A_n) \leq c 2^{dn} 2^{-(d + 2 \varepsilon) n} + \sum_{k = 1}^p
  2^{(G_\mu (t_k) + \frac{1}{2} \varepsilon) n} 2^{-(t_k - \delta) n}
  \leq c 2^{- 2 \varepsilon n} + p 2^{(\delta - \frac{3}{2}
    \varepsilon)n},
  \]
  since $G_\mu (t_k) \leq t_k - 2 \varepsilon$. Since $\delta_0$ is
  arbitrary and $\delta < \delta_0$, we can make $\delta$ as small as
  we like, and hence we can achieve that
  \[
  \mu (A_n) \leq (p+1) 2^{- \frac{3}{2} \varepsilon n}
  \]
  for $n > N$. Hence $\mu (A_n) \leq 2^{- \varepsilon n}$ if $n$ is
  large enough.
\end{proof}

Put
\begin{align*}
  \Theta_n (t) &= \bigcap_{k=n}^\infty \{\, x : \mu (B(x,r)) < r^t
  \text{ for } r = 2^{- k} \,\}, \\ \Theta (t) &= \bigcap_{n=1}^\infty
  \Theta_n (t).
\end{align*}
If $t_1 < \udimh \mu$ then $\mu (\Theta (t_1)) > 0$ and so $\mu
(\Theta_{n_1} (t_1)) > 0$ if $n_1$ is large enough.
%If $t_1 < \ldimh
%\mu$ then $\mu(\Theta (t_1)) = 1$ and so $\mu (\Theta_{n_1} (t_1))$ is
%close to $1$ if $n_1$ is large enough.
We assume from now on that $t_1 < \udimh \mu$, and we choose a number
$m_1$ so large that $\mu (\Theta_{m_1} (t_1)) > 0$.

Let
\[
\mathscr{C}_m = \{\, D \in \mathscr{D}_{[\beta m]} : \mu (\Theta_{m_1} (t_1) \cap
D) \geq \frac{5}{6} \mu (D) \,\}.
\]
(We use $\mathscr{D}_{[\beta m]}$ instead of $\mathscr{D}_m$, since we
are later going to use that the balls $B_m$ which define the
limsup-set $E_\alpha$ have the property that $D \subset B_m$ for some
$D \in \mathscr{D}_{[\beta m]}$.)

By Lebesgue density theorem,
\[
\mu (\Theta_{m_1} (t_1) \setminus \cup \mathscr{C}_m) \to 0, \qquad m
\to \infty,
\]
where we have used the notation
\[
\cup \mathscr{A} = \bigcup_{A \in \mathscr{A}} A.
\]
This notation will appear several times below.

There exists a sequence $(m_k)$ such that the set
\[
\hat{\Theta} = \Theta_{m_1} (t_1) \cap \bigcap_{k=1}^\infty \cup
\mathscr{C}_{m_k}
\]
satisfies $\mu (\hat{\Theta}) > \frac{1}{2} \mu (\Theta_{m_1} (t_1)) >
0$ and
\[
  \sum_{l = k + 1} \mu (\Theta_{m_1} (t_1) \setminus \cup
  \mathscr{C}_{m_k}) < \frac{1}{6} 2^{-[\beta m_k] t_0}.
\]
By construction, the set $\hat{\Theta}$ has the following property. If
$D \in \mathscr{D}_{[\beta m_k]}$ for some $m_k$, then either $D
\subset \hat{\Theta}^\complement$ or
\begin{align} \label{eq:propertyofDelta}
  \mu (\hat{\Theta} \cap D) &\geq \mu (\Theta_{m_1} (t_1) \cap D) -
  \sum_{l = k + 1} \mu (\Theta_{m_1} (s) \setminus \cup
  \mathscr{C}_{m_k}) \nonumber \\ &\geq \frac{5}{6} \mu (D) -
  \frac{1}{6} 2^{-[\beta m_k] t_0}.
\end{align}

Let $\beta > \alpha$. We choose a sparse sequence $(n_k)$, which is a
subsequence of $(m_k)$, as follows. By Lemma~\ref{lem:t0}, there is a
number $n_0$ such that $\mu (A_n) \leq 2^{- \varepsilon n}$ for $n
\geq n_0$. For $k \geq 1$, we may choose inductively a sequence $n_k$
such that $\beta n_1 > n_0$ and so that
\begin{equation} \label{eq:nk}
  \frac{1}{6} 2^{- [\beta n_k] t_0} > \sum_{l =
    k+1}^\infty 2^{- \varepsilon [\beta n_l]}
\end{equation}
holds for all $k \geq 1$. The sequence $n_k$ can in fact be chosen
according to the following lemma.

\begin{lemma} \label{lem:nu}
  The sequence $n_k$ can be chosen so that the following holds.
  If $D \in \mathscr{D}_{[\beta n_k]}$ for some $k$, then have
  \[
  \nu (D) = 0 \text{ or } \nu (D) > 2^{- t_0 [\beta n_k] - 1}.
  \]
\end{lemma}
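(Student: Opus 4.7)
The plan is to set
\[
C = \hat{\Theta} \setminus \bigcup_{l=1}^\infty A_{[\beta n_l]}, \qquad \nu = \frac{\mu|_C}{\mu(C)},
\]
where $(n_k)$ is the subsequence of $(m_k)$ still to be constructed, and then extract the dichotomy from the density estimate \eqref{eq:propertyofDelta} together with the tail control \eqref{eq:nk}. Once $(n_k)$ is chosen sparsely enough, $\mu\bigl(\bigcup_l A_{[\beta n_l]}\bigr) \leq \sum_l 2^{-\varepsilon[\beta n_l]}$ is much smaller than $\mu(\hat{\Theta})$, so $C$ has positive measure and $\nu$ is a bona fide probability measure with $\mu(C) \leq 1$.

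The inductive construction of $(n_k)$ is routine. Set $n_1 \in (m_k)$ with $\beta n_1 > n_0$, and having chosen $n_1 < \cdots < n_k$ inside $(m_k)$, select $n_{k+1}$ to be a later term of $(m_k)$ so large that both $2^{-\varepsilon[\beta n_{k+1}]} \leq \tfrac{1}{2} 2^{-\varepsilon[\beta n_k]}$ and $2^{-\varepsilon[\beta n_{k+1}]} < \tfrac{1}{12} 2^{-t_0[\beta n_k]}$. The first condition forces the geometric tail $\sum_{l>k} 2^{-\varepsilon[\beta n_l]}$ to be dominated by $2 \cdot 2^{-\varepsilon[\beta n_{k+1}]}$, and combined with the second this yields $\sum_{l>k} 2^{-\varepsilon[\beta n_l]} < \tfrac{1}{6} 2^{-t_0[\beta n_k]}$, which is precisely \eqref{eq:nk}. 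At each stage only finitely many constraints have been imposed, so no circularity arises.

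Now fix $D \in \mathscr{D}_{[\beta n_k]}$. Both $A_{[\beta n_k]}$ and $\cup \mathscr{C}_{n_k}$ are unions of elements of $\mathscr{D}_{[\beta n_k]}$, so $D$ is either contained in or disjoint from each of them. If $D \subset A_{[\beta n_k]}$ or $D \subset \hat{\Theta}^\complement$, then $D \cap C = \emptyset$ and $\nu(D) = 0$. Otherwise $\mu(D) > 2^{-t_0[\beta n_k]}$ by the definition of $A_{[\beta n_k]}$, and the non-trivial case of \eqref{eq:propertyofDelta} gives
\[
\mu(\hat{\Theta} \cap D) \geq \tfrac{5}{6} \mu(D) - \tfrac{1}{6} 2^{-t_0[\beta n_k]} > \tfrac{2}{3} 2^{-t_0[\beta n_k]}.
\]
Since $D$ is disjoint from $A_{[\beta n_k]}$, the only further mass lost when intersecting with $C$ comes from the $A_{[\beta n_l]}$ with $l > k$, and by \eqref{eq:nk} this loss is at most $\tfrac{1}{6} 2^{-t_0[\beta n_k]}$, leaving
\[
\mu(C \cap D) > \tfrac{2}{3} 2^{-t_0[\beta n_k]} - \tfrac{1}{6} 2^{-t_0[\beta n_k]} = 2^{-t_0[\beta n_k]-1}.
\]
Dividing by $\mu(C) \leq 1$ yields $\nu(D) > 2^{-t_0[\beta n_k]-1}$.

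The one thing to watch is that the constants $\tfrac{5}{6}$ and $\tfrac{1}{6}$ in \eqref{eq:propertyofDelta} and the $\tfrac{1}{6}$ slack in \eqref{eq:nk} have been calibrated so that the two losses together leave precisely $\tfrac{1}{2}$ on the right-hand side, matching the exponent $-1$ in the conclusion; beyond this arithmetic, the proof reduces to a straightforward partition of $\mathscr{D}_{[\beta n_k]}$ according to whether $D$ falls into one of the ``bad'' unions or not.
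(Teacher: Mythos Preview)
Your approach is the same as the paper's: define $C = \hat\Theta \setminus \bigcup_l A_{[\beta n_l]}$, set $\nu = \mu|_C / \mu(C)$, choose $(n_k)$ sparse enough for \eqref{eq:nk}, and partition the cubes $D \in \mathscr{D}_{[\beta n_k]}$ into those with $\nu(D)=0$ and those with the desired lower bound. The arithmetic with the constants $\tfrac{5}{6}$ and $\tfrac{1}{6}$ is likewise identical.

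There is, however, one case missing from your partition. In the ``otherwise'' branch you assert that, since $D$ is disjoint from $A_{[\beta n_k]}$, the only mass lost in passing from $\hat\Theta \cap D$ to $C \cap D$ comes from the sets $A_{[\beta n_l]}$ with $l>k$. But nothing you have established rules out $D \subset A_{[\beta n_l]}$ for some $l<k$: the parent of $D$ in $\mathscr{D}_{[\beta n_l]}$ may well have $\mu$-measure at most $2^{-t_0[\beta n_l]}$ while still meeting $\hat\Theta$, since the construction of $\hat\Theta$ is governed by $t_1$ and the density collections $\mathscr{C}_m$, not by $t_0$. In that situation $\mu(C\cap D)=0$, so your claimed inequality $\mu(C\cap D) > 2^{-t_0[\beta n_k]-1}$ is false. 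The repair is immediate---add ``or $D \subset A_{[\beta n_l]}$ for some $l<k$'' to your $\nu(D)=0$ case, exactly as the paper does---and then in the genuine ``otherwise'' branch $D$ is disjoint from every $A_{[\beta n_l]}$ with $l\le k$ and your estimate goes through.
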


\begin{proof}
  Let $B_n = A_n^\complement$, and
  \[
  C = \bigcap_{k=1}^\infty B_{[\beta n_k]} \cap \hat{\Theta}.
  \]
  We have
  \begin{align*}
    \mu (C) &\geq \mu (\hat{\Theta}) - \sum_{k=1}^\infty \mu
    (A_{[\beta n_k]}) \geq \mu (\hat{\Theta}) - \sum_{k=1}^\infty 2^{-
      \varepsilon [\beta n_k]} \\ &\geq \mu (\hat{\Theta}) -
    \sum_{k=[\beta n_1]}^\infty 2^{-\varepsilon k} = \mu
    (\hat{\Theta}) - \frac{2^{-\varepsilon [\beta n_1]}}{1 -
      2^{-\varepsilon}}.
  \end{align*}
  %Hence, if $t_1 < \ldimh \mu$, by choosing $n_1$ sufficiently large,
  %we can ensure that $\mu (C) > \frac{1}{2}$, and we can make $\mu
  %(C)$ as close to $1$ as we desire.
  Hence, if $t_1 < \udimh \mu$, then we can ensure that $\mu (C) >
  \frac{1}{2} \mu (\hat{\Theta}) > 0$ by choosing $n_1$ sufficiently
  large. We assume that $\mu (C) > \frac{1}{2} \mu (\hat{\Theta})$
  holds.

  We now put $\nu = \frac{\mu|_C}{\mu (C)}$. Clearly, $\nu$ is
  absolutely continuous with respect to $\mu$, with bounded
  density. Moreover, by the choice of the sequence $n_k$, if $D \in
  \mathscr{D}_{[\beta n_k]}$ for some $k$, then
  \[
  \nu (D) \leq \frac{\mu (D)}{\mu (C)} \leq \frac{2}{\mu(\hat{\Theta})}
  \mu (D),
  \]
  and if $\mu (D) > 2^{- t_0 [\beta n_k]}$ then, either $D \subset
  \hat{\Theta}^\complement \cup A_{[\beta n_l]}$ for some $l < k$ and $\nu (D) =
  0$, or
  \begin{align*}
    \nu (D) &\geq \mu (C \cap D) \geq \mu (\hat{\Theta} \cap D) - \mu
    \biggl( \bigcup_{l = k+1}^\infty A_{[\beta n_l]} \biggr) \\ &\geq
    \frac{5}{6} \mu (D) - \frac{1}{6} 2^{-[\beta n_k]t_0} -
    \sum_{l=k+1}^\infty 2^{-\varepsilon [\beta n_l]} \\ &\geq
    \frac{5}{6} \mu (D) - \frac{1}{6} 2^{-[\beta n_k]t_0} -
    \frac{1}{6} 2^{-[\beta n_k] t_0} \geq \frac{1}{2} \mu (D),
  \end{align*}
  holds by \eqref{eq:propertyofDelta} and \eqref{eq:nk}.

  Finally, if $D \in \mathscr{D}_{[\beta n_k]}$ is such that $\mu (D)
  \leq 2^{- t_0 [\beta n_k]}$, then $D \subset A_{[\beta n_k]}$ and so
  $\nu (D) = 0$.
\end{proof}

\subsection{Almost surely $\boldsymbol{E_\alpha \in \mathscr{G}_\nu^\theta}$}

In this section, we are going to prove that $E_\alpha \in
\mathscr{G}_\nu^\theta$ holds almost surely, using to use
Lemma~\ref{lem:frostman}. We will therefore check that the conditions
of Lemma~\ref{lem:frostman} holds almost surely.

Take $\beta > \alpha$ such that $\frac{1}{\beta} \leq t_0$. We define
the random probability measures
\[
\nu_n = 2^{-(n-1)} \sum_{k=2^{n-1}+1}^{2^n} \frac{\nu|_{D_k}}{\nu
  (D_k)},
\]
where $D_k \in \mathscr{D}_{[ \beta n ]}$ are chosen so that $x_k \in
D_k$. If $\nu (D_k) = 0$, then $\frac{\nu|_{D_k}}{\nu (D_k)}$ should
be interpreted as $0$. It is then clear that almost surely, $\nu_n$
converge weakly to $\nu$, and that
\[
\supp \nu_n \subset \bigcup_{k=2^{n-1}+1}^{2^n} B (x_k, k^{-\alpha})
\]
if $n$ is large.

We want to estimate the expectation of $\iint \nu(Q(x,y))^{-\theta} \,
\mathrm{d} \nu_n \mathrm{d} \nu_n$. Let $\E$ denote expectation and
split the expectation into two parts,
\[
\E \iint \nu (Q(x,y))^{-\theta} \, \mathrm{d} \nu_n (x) \mathrm{d}
\nu_n (y) = E_1 + E_2,
\]
where
\begin{align*}
  E_1 &= 2^{-2n-2} \sum_{k \neq l} \E \int_{D_k} \int_{D_l} \nu
  (Q(x,y))^{-\theta} \, \frac{\mathrm{d} \nu (x)}{\nu (D_l)}
  \frac{\mathrm{d} \nu (y)}{\nu (D_k)}, \\ E_2 &= 2^{-2n-2}
  \sum_{k=2^{n-1}+1}^{2^n} \E \int_{D_k} \int_{D_k} \nu
  (Q(x,y))^{-\theta} \, \frac{\mathrm{d} \nu (x)}{\nu (D_k)}
  \frac{\mathrm{d} \nu (y)}{\nu (D_k)}.
\end{align*}

We first consider $E_1$.

\begin{lemma} \label{lem:E1}
  For $k \neq l$ we have
  \[
  \E \int_{D_k} \int_{D_l} \nu (Q(x,y))^{-\theta} \, \frac{\mathrm{d}
    \nu (x)}{\nu (D_l)} \frac{\mathrm{d} \nu (y)}{\nu (D_k)} = \iint
  \nu(Q(x,y))^{-\theta} \, \mathrm{d} \nu (x) \mathrm{d} \nu (y),
  \]
  and
  \[
  E_1 = (1 - 2^{-(n-1)}) \iint \nu(Q(x,y))^{-\theta} \, \mathrm{d} \nu
  (x) \mathrm{d} \nu (y).
  \]
\end{lemma}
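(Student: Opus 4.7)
My plan is to exploit the independence of $x_k$ and $x_l$ for $k \neq l$, factor the expectation, and reduce each factor to an integration against $\nu$. I would begin by rewriting
\[
\int_{D_k}\int_{D_l} \nu(Q(x,y))^{-\theta}\,\frac{\mathrm{d}\nu(x)}{\nu(D_l)}\frac{\mathrm{d}\nu(y)}{\nu(D_k)} = \iint \nu(Q(x,y))^{-\theta}\,\frac{\mathbf{1}_{x\in D_l}}{\nu(D_l)}\,\frac{\mathbf{1}_{y\in D_k}}{\nu(D_k)}\,\mathrm{d}\nu(x)\mathrm{d}\nu(y),
\]
with the $0/0=0$ convention in force. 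Using Fubini to move the expectation inside, and the independence of $x_k$ and $x_l$, the expected product of the two indicator/normalizer factors splits as a product of two expectations, one depending only on $x_l$ and one only on $x_k$.

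The heart of the argument is to verify that, for $\nu$-almost every $x$,
\[
\E\!\left[\frac{\mathbf{1}_{x\in D_l}}{\nu(D_l)}\right] = 1.
\]
The event $x\in D_l$ is exactly the event $x_l \in D(x)$, where $D(x)$ is the unique cube of $\mathscr{D}_{[\beta n]}$ containing $x$. Summing over admissible cubes, the mass assigned to this event divided by the normalizer $\nu(D(x))$ collapses to $1$ on $\supp\nu$, thanks to the construction of $\nu$ and the set $C$ in Lemma~\ref{lem:nu}, while the $0/0$ convention discards the contribution from cubes where $\nu$ vanishes. Feeding this back through Fubini produces the per-pair identity $\iint \nu(Q(x,y))^{-\theta}\,\mathrm{d}\nu(x)\mathrm{d}\nu(y)$.

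For the closed form of $E_1$, the index range $\{2^{n-1}+1,\dots,2^n\}$ has $2^{n-1}$ elements, so the number of ordered pairs $(k,l)$ with $k\neq l$ is $2^{n-1}(2^{n-1}-1)$; each contributes the same per-pair value, and combining with the prefactor in the expansion of $\nu_n\otimes\nu_n$ gives exactly $(1-2^{-(n-1)})$ times the common integral. The main obstacle is executing the ``$\E=1$'' step cleanly, since $x_l$ is $\mu$-distributed rather than $\nu$-distributed; one must lean on the precise alignment between $\mu$ and $\nu$ at scale $[\beta n]$ built into the definition of $C$ and the sparse sequence $(n_k)$ of Lemma~\ref{lem:nu}, so that only cubes with $\nu(D)>0$ contribute and the normalization inside the expectation is indeed accounted for.
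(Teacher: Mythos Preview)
Your factoring-through-independence approach differs from the paper's, and the step you flag as ``the main obstacle'' is a genuine gap that your plan does not close. For $\nu$-a.e.\ $x$ one has, since $x_l\sim\mu$,
\[
\E\!\left[\frac{\mathbf{1}_{x\in D_l}}{\nu(D_l)}\right]
=\frac{\Pr[x_l\in D(x)]}{\nu(D(x))}
=\frac{\mu(D(x))}{\nu(D(x))},
\]
and this ratio is \emph{not} equal to $1$: with $\nu=\mu|_C/\mu(C)$ one gets $\mu(D)/\nu(D)=\mu(C)\,\mu(D)/\mu(D\cap C)$, which Lemma~\ref{lem:nu} only pins between two positive constants. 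The alignment you invoke yields comparability, not the equality the identity requires, so feeding it back through Fubini produces the double integral against $\nu\otimes\nu$ weighted by the factors $\frac{\mu(D(x))}{\nu(D(x))}\frac{\mu(D(y))}{\nu(D(y))}$, not the bare integral.

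The paper takes a different route and never computes $\E[\mathbf{1}_{x\in D_l}/\nu(D_l)]$. It expands the expectation as a double integral in $(x_k,x_l)$ and splits according to whether $D_k=D_l$. The structural observation missing from your plan is that when $D_k\neq D_l$ the kernel $\nu(Q(x,y))^{-\theta}$ is \emph{constant} on $D_k\times D_l$, equal to $\nu(Q(x_k,x_l))^{-\theta}$, so the normalised inner integral collapses exactly; when $D_k=D_l=D$ the inner integral is $\iint_{D\times D}\nu(Q)^{-\theta}\,\frac{\mathrm{d}\nu}{\nu(D)}\frac{\mathrm{d}\nu}{\nu(D)}$ and is independent of $(x_k,x_l)$. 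Recombining the two pieces reassembles $\iint\nu(Q)^{-\theta}\,\mathrm{d}\nu\,\mathrm{d}\nu$. Note, however, that the paper carries out the outer integration with respect to $\mathrm{d}\nu(x_k)\,\mathrm{d}\nu(x_l)$ rather than $\mathrm{d}\mu$; the $\mu$-versus-$\nu$ discrepancy you correctly identified is therefore present in the paper's own argument as well, and the stated exact equality rests on treating the sampling law as $\nu$.
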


\begin{proof}
  We have
  \begin{multline*}
    \E \int_{D_k} \int_{D_l} \nu (Q(x,y))^{-\theta} \,
    \frac{\mathrm{d} \nu (x)}{\nu (D_l)} \frac{\mathrm{d} \nu
      (y)}{\nu (D_k)} \\ = \iint \int_{D_k} \int_{D_l} \nu
    (Q(x,y))^{-\theta} \, \frac{\mathrm{d} \nu (x)}{\nu (D_l)}
    \frac{\mathrm{d} \nu (y)}{\nu (D_k)} \, \mathrm{d} \nu (x_k)
    \mathrm{d} \nu (x_l).
  \end{multline*}
  If $x_k$ and $x_l$ are such that $D_k \neq D_l$, then $Q(x,y)$ is
  constant for $(x,y) \in D_k \times D_l$ and in fact $Q(x,y) =
  Q(x_k,x_l)$. This is not the case if $D_k = D_l$. But if $D \in
  \mathscr{D}_{[ \beta n ]}$, then
  \[
  \int_{D_k} \int_{D_l} \nu (Q(x,y))^{-\theta} \, \frac{\mathrm{d}
    \nu (x)}{\nu (D_l)} \frac{\mathrm{d} \nu (y)}{\nu (D_k)} =
  \int_D \int_D \nu (Q(x,y))^{-\theta} \, \frac{\mathrm{d} \nu
    (x)}{\nu (D)} \frac{\mathrm{d} \nu (y)}{\nu (D)}
  \]
  whenever $x_k$ and $x_l$ are such that $D_k = D_l = D$, that is for
  $x_k, x_l \in D$.

  Hence we have
  \begin{align*}
    \iint & \int_{D_k} \int_{D_l} \nu (Q(x,y))^{-\theta} \,
    \frac{\mathrm{d} \nu_n (x)}{\nu (D_l)} \frac{\mathrm{d} \nu_n
      (y)}{\nu (D_k)} \, \mathrm{d} \nu (x_k) \mathrm{d} \nu (x_l)
    \\ = & \iint\limits_{D_k \neq D_l} \nu (Q(x_k,x_l))^{-\theta} \,
    \mathrm{d}\nu (x_k) \mathrm{d}\nu (x_l) \\ & + \sum_{D \in
      \mathscr{D}_{[\beta n]}} \iint_{D\times D} \iint_{D\times D} \nu
    (Q(x,y))^{-\theta} \, \frac{\mathrm{d} \nu (x)}{\nu (D)}
    \frac{\mathrm{d} \nu (y)}{\nu (D)} \mathrm{d} \nu (x_k) \mathrm{d}
    \nu (x_l) \\ = & \iint\limits_{D_k \neq D_l} \nu
    (Q(x_k,x_l))^{-\theta} \, \mathrm{d}\nu (x_k) \mathrm{d}\nu (x_l)
    \\ & + \sum_{D \in \mathscr{D}_{[\beta n]}} \iint_{D\times D} \nu
    (Q(x_k,x_l))^{-\theta} \, \mathrm{d} \nu (x_k) \mathrm{d} \nu
    (x_l) \\ = & \iint \nu(Q(x,y))^{-\theta} \, \mathrm{d} \nu (x)
    \mathrm{d} \nu (y),
  \end{align*}
  which is the first equality of the lemma. The second equality of the
  lemma follows immediately from the first equality, since the first
  equality says that all terms in the sum defining $E_1$ are equal to
  $\iint \nu(Q(x,y))^{-\theta} \, \mathrm{d} \nu (x) \mathrm{d} \nu
  (y)$.
\end{proof}

We now consider $E_2$. Recall that $D_n (x)$ denotes the unique
element of $\mathscr{D}_n$ that contains $x$.

\begin{lemma} \label{lem:E2}
  We have
  \[
  \E \int_{D_k} \int_{D_k} \nu(Q(x,y))^{-\theta} \, \frac{\mathrm{d}
    \nu (x)}{\nu (D_k)} \frac{\mathrm{d} \nu (y)}{\nu (D_k)} \leq
  \frac{1}{1-\theta} \int \nu (D_{[ \beta n ]} (x))^{-\theta} \,
  \mathrm{d} \nu (x)
  \]
  and
  \[
  E_2 \leq \frac{2^{-(n-1)}}{1-\theta} \int \nu (D_{[\beta n]}
  (x))^{-\theta} \, \mathrm{d} \nu (x).
  \]
\end{lemma}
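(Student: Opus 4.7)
The plan is to condition on the random cube $D_k \in \mathscr{D}_{[\beta n]}$ containing $x_k$ and then invoke Lemma~\ref{lem:energyestimate} cube by cube.

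The quantity $\int_{D_k}\int_{D_k} \nu(Q(x,y))^{-\theta} \, d\nu(x)\, d\nu(y) / \nu(D_k)^2$ depends on the randomness only through which $D \in \mathscr{D}_{[\beta n]}$ equals $D_k$, and the normalisation by $\nu(D_k)$ is exactly what makes the weighting $\nu|_{D_k}/\nu(D_k)$ behave like the measure $\nu$ when taking expectations. Arguing as in the derivation of the first equality of Lemma~\ref{lem:E1}, this gives
\[
\E \int_{D_k}\int_{D_k} \nu(Q(x,y))^{-\theta} \, \frac{d\nu(x)}{\nu(D_k)} \frac{d\nu(y)}{\nu(D_k)} = \sum_{D} \frac{1}{\nu(D)} \iint_{D \times D} \nu(Q(x,y))^{-\theta} \, d\nu(x) \, d\nu(y),
\]
where the sum ranges over those $D \in \mathscr{D}_{[\beta n]}$ with $\nu(D) > 0$.

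Since $\nu$ is non-atomic and finite, Lemma~\ref{lem:energyestimate} (third inequality), applied to the restriction of $\nu$ to each such $D$, gives
\[
\iint_{D \times D} \nu(Q(x,y))^{-\theta} \, d\nu(x)\, d\nu(y) \leq \frac{\nu(D)^{2-\theta}}{1-\theta}.
\]
Substituting produces the upper bound $\tfrac{1}{1-\theta}\sum_D \nu(D)^{1-\theta}$. Because $\nu(D_{[\beta n]}(x))$ is constant equal to $\nu(D)$ on each $D$,
\[
\sum_D \nu(D)^{1-\theta} = \sum_D \nu(D) \cdot \nu(D)^{-\theta} = \int \nu(D_{[\beta n]}(x))^{-\theta} \, d\nu(x),
\]
which establishes the first inequality.

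For the bound on $E_2$, the expectations in the defining sum are all equal to the quantity just estimated, so after multiplying by the number of terms $2^{n-1}$ and by the prefactor $(2^{-(n-1)})^2$ coming from the two copies of $d\nu_n$, one obtains the factor $2^{-(n-1)}$ in front of the integral. The only nontrivial step is the first reduction to a sum over dyadic cubes, and that is carried out in complete analogy with the proof of Lemma~\ref{lem:E1}; the rest is a direct application of Lemma~\ref{lem:energyestimate}.
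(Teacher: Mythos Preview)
Your argument is correct and uses the same ingredient as the paper (Lemma~\ref{lem:energyestimate}), but the paper proceeds in the reverse order: it first bounds the random integrand pointwise by $\nu(D_k)^{-\theta}/(1-\theta)$ via Lemma~\ref{lem:energyestimate}, and only then takes the expectation, obtaining $\frac{1}{1-\theta}\,\E\,\nu(D_k)^{-\theta}=\frac{1}{1-\theta}\int \nu(D_{[\beta n]}(x))^{-\theta}\,\mathrm{d}\nu(x)$ directly. Your route---expanding the expectation as a sum over the level-$[\beta n]$ cubes and then bounding each summand---yields the same bound but inserts an unnecessary intermediate identity; the paper's order is shorter and avoids the explicit cube-by-cube decomposition.
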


\begin{proof}
  By Lemma~\ref{lem:energyestimate}
  \[
  \int_{D_k} \int_{D_k} \nu(Q(x,y))^{-\theta} \, \frac{\mathrm{d} \nu
    (x)}{\nu (D_k)} \frac{\mathrm{d} \nu (y)}{\nu (D_k)} \leq
  \frac{\nu (D_k)^{-\theta}}{1-\theta}.
  \]
  Hence
  \[
  \E \int_{D_k} \int_{D_k} \nu(Q(x,y))^{-\theta} \, \frac{\mathrm{d}
    \nu (x)}{\nu (D_k)} \frac{\mathrm{d} \nu (y)}{\nu (D_k)} \leq
  \frac{1}{1-\theta} \int \nu (D_{[\beta n]} (x))^{-\theta} \,
  \mathrm{d} \nu (x),
  \]
  which is the first estimate in the lemma. The second estimate of the
  lemma follows from the first estimate and the definition of $E_2$.
\end{proof}

From Lemma~\ref{lem:E1} and \ref{lem:E2} we get the following
corollary.

\begin{corollary} \label{cor:expectation}
  \begin{multline*}
    \E \iint \nu (Q(x,y))^{-\theta} \, \mathrm{d} \nu_n (x) \mathrm{d}
    \nu_n (y) \\ \leq \iint \nu (Q(x,y))^{-\theta} \, \mathrm{d} \nu
    (x) \mathrm{d} \nu (y) + \frac{2^{-(n-1)}}{1-\theta} \int \nu
    (D_{[ \beta n ]} (x))^{-\theta} \, \mathrm{d} \nu (x).
  \end{multline*}
\end{corollary}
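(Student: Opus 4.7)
The plan is to directly combine the two preceding lemmata with the decomposition
\[
\E \iint \nu(Q(x,y))^{-\theta} \,\mathrm{d}\nu_n(x)\mathrm{d}\nu_n(y) = E_1 + E_2
\]
that was introduced just before Lemma~\ref{lem:E1}. Since this splitting is by definition and each of $E_1$ and $E_2$ has already been controlled by a separate lemma, essentially no new work remains; the corollary is really just a convenient repackaging.

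First I would invoke Lemma~\ref{lem:E1}, which gives the exact identity
\[
E_1 = (1-2^{-(n-1)}) \iint \nu(Q(x,y))^{-\theta} \,\mathrm{d}\nu(x)\mathrm{d}\nu(y).
\]
Since the energy is non-negative and $(1-2^{-(n-1)}) \leq 1$, this is bounded above by $\iint \nu(Q(x,y))^{-\theta} \,\mathrm{d}\nu(x)\mathrm{d}\nu(y)$, which is exactly the first term on the right-hand side of the corollary. Then Lemma~\ref{lem:E2} directly provides
\[
E_2 \leq \frac{2^{-(n-1)}}{1-\theta} \int \nu(D_{[\beta n]}(x))^{-\theta}\,\mathrm{d}\nu(x),
\]
which matches the second term. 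Summing these two inequalities yields the claim.

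There is no real obstacle here: the computational content has already been absorbed into the statements of Lemmata~\ref{lem:E1} and \ref{lem:E2}, and what remains is one line of arithmetic. The point of isolating the corollary is presumably to have an estimate of $\E I_\nu^\theta(\nu_n)$ in a form suitable for the forthcoming argument, where one wants to show that the finiteness of $\iint \nu(Q(x,y))^{-\theta}\,\mathrm{d}\nu\,\mathrm{d}\nu$ (together with control of the diagonal term via the choice of the sequence $n_k$ in Lemma~\ref{lem:nu}) produces uniformly bounded energies $I_\nu^\theta(\nu_n)$ along a subsequence almost surely, so that Lemma~\ref{lem:frostman} can be applied to conclude $E_\alpha \in \mathscr{G}_\nu^\theta$.
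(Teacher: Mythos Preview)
Your proposal is correct and matches the paper's approach exactly: the paper simply states that the corollary follows from Lemmata~\ref{lem:E1} and \ref{lem:E2}, and your argument---bounding $E_1$ by dropping the factor $(1-2^{-(n-1)})\leq 1$ and quoting the bound on $E_2$---is precisely the intended one-line combination.
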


We need to estimate $\int \nu (D_{[ \beta n ]} (x))^{-\theta} \,
\mathrm{d} \nu(x)$. By Lemma~\ref{lem:nu}, we have that either $\nu
(D_{[\beta n_k]} (x)) = 0$ or $\nu (D_{[\beta n_k]} (x)) > 2^{-t_0
  [\beta n_k] - 1}$. We therefore have
\[
\int \nu (D_{[\beta n_k]} (x))^{-\theta} \, \mathrm{d} \nu(x) <
2^{(t_0 \beta n_k + 1) \theta} \leq 2^{t_0 \beta n_k \theta + 1}.
\]
Hence, by Corollary~\ref{cor:expectation}, if $t_0 \beta \theta \leq
1$, then the expectations
\[
\E \iint \nu (Q(x,y))^{-\theta} \, \mathrm{d} \nu_{n_k} (x) \mathrm{d}
\nu_{n_k} (y)
\]
are uniformly bounded. Take $\theta = \frac{1}{t_0 \beta}$. Then
$\theta \leq 1$ since $\frac{1}{\beta} \leq t_0$.

Almost surely, there is then a sequence $m_k$, which is a subsequence
of $n_k$, and such that
\[
\iint \nu (Q(x,y))^{-\theta} \, \mathrm{d} \nu_{m_k} (x) \mathrm{d}
  \nu_{m_k} (y)
\]
is uniformly bounded and we assume that $m_k$ is such a sequence.

Lemma~\ref{lem:frostman} then implies that $\limsup_k E_{m_k} \in
\mathscr{G}_\nu^\theta$ with $\theta = (t_0 \beta)^{-1}$. Hence we
have almost surely that $E_\alpha \in \mathscr{G}_\nu^\theta$ with
$\theta = (t_0 \beta)^{-1}$.  Since $t_0$ can be chosen arbitrary
close to $s (\mu)$ and $\beta$ can be taken arbitrary close to
$\alpha$, this proves that almost surely $E_\alpha \in
\mathscr{G}_\nu^\theta$ with $\theta = (s \alpha)^{-1}$. By taking
$t_1$ close to $\udimh \mu$, the first dimension estimate of
Theorem~\ref{the:dimension} implies that
\[
\dimh E_\alpha \geq \frac{1}{\alpha} \frac{\udimh \mu}{s}
\]
almost surely.

We have shown that almost surely $E_\alpha \in \mathscr{G}_\nu^\theta$
with $\theta = (t_0 \beta)^{-1}$. This means, by
Definition~\ref{def:class}, that for $\eta < \theta$, we have
\[
\mathscr{M}_\nu^\eta (E \cap D) = \mathscr{M}_\nu^\eta (D)
\]
whenever $D$ is a dyadic cube. Since $\nu = \frac{\mu|_C}{\mu(C)}$, we
have
\[
\mu (C)^{-\eta} \mathscr{M}_\mu^\eta (E \cap D) \geq \mathscr{M}_\nu^\eta
(E \cap D) = \mathscr{M}_\nu^\eta (D) = \nu (D)^\eta = \mu(C \cap D)^\eta.
\]
We can make $\mu(C)$ as close to $1$ as we like (if we let $t_1 <
\ldimh \mu$), and therefore we almost surely have
\[
\mathscr{M}_\mu^\eta (E \cap D) \geq \mu(D)^\eta =
\mathscr{M}_\mu^\eta (D).
\]
Hence $E_\alpha \in \mathscr{G}_\mu^\theta$ almost surely, with
$\theta = (s \alpha)^{-1}$.

\section{Proofs of result on dynamical Diophantine approximation} \label{sec:shrinkingproofs}

In this section we will prove Theorem~\ref{the:shrinking} and
Corollaries~\ref{cor:quadratic} and \ref{cor:piecewise}.

We start by proving the corollaries, which are consequences of
Theorem~\ref{the:shrinking}.

\begin{proof}[Proof of Corollary~\ref{cor:quadratic}]
  From Young \cite{young} we know that when $a \in \Delta$, there is a
  $T_a$-invariant measure $\mu_a$ with properties described in the
  following.

  The support of $\mu_a$ is $[T_a^2 (0), T_a (0)]$, $\mu_a$ is
  absolutely continuous with respect to Lebesgue measure and the
  density of $\mu_a$ is bounded away from $0$ on the support
  \cite[Theorem~2]{young}. Hence, we have $s=1$, where $s$ is defined
  as in Theorem~\ref{the:shrinking}.

  The density $\rho$ of $\mu_a$ can be written as $\rho = \rho_1 +
  \rho_2$, where 
  \[
  0 \leq \rho_2 (x) \leq C \sum_{k=1}^\infty \frac{1.9^{-k}}{\sqrt{|x
      - T_a^k (0)|}}
  \]
  and $\rho_1$ is bounded \cite[Theorem~1]{young}. It follows that we
  may take $t_1 = \frac{1}{2}$ in \eqref{eq:measuredecay}.

  Correlations decay exponentially in the sense that \eqref{eq:decay}
  holds with $p(n) = C \tau^n$ for some $C$ and $\tau \in (0,1)$. This
  is apparent from the proof of the main theorem in \cite{young}. (The
  actual theorem contains a somewhat weaker statement.)

  Hence all assumptions of Theorem~\ref{the:shrinking} are satisfied,
  and we conclude that there is a set $A$ of full $\mu_a$ measure in
  $[1-a,1]$ such that
  \[
  \dimh \bigcap_n E_\alpha (x_n) = \frac{1}{\alpha}
  \]
  holds whenever $x_1, x_2, \ldots$ are elements of $A$. Since $\mu_a$
  is equivalent to Lebesgue measure on $[1-a,1]$, this proves the theorem.
\end{proof}

\begin{proof}[Proof of Corollary~\ref{cor:piecewise}]
  By Corollary~3 of \cite{perssonrams}, assumption
  \eqref{eq:measuredecay} is satisfied with
  \[
  t_1 = \limsup_{m\to \infty} \inf \frac{S_m \phi - m P(\phi)}{- \log
    |(T^m)'|} > 0.
  \]
  By Theorem~\ref{the:shrinking}, there is a set $A$ of full
  $\mu_\phi$ measure such that
  \[
  \frac{1}{\alpha} \frac{\dimh \mu_\phi}{s} = \frac{1}{\alpha}
  \frac{\udimh \mu_\phi}{s} \leq \dimh \bigcap_k E_\alpha (x_k) \leq
  \frac{1}{\alpha}
  \]
  holds whenever $x_1, x_2, \ldots$ are elements of
  $A$. Assumption~\eqref{eq:dim-coarse} implies that $\dimh \mu_\phi =
  s$.
\end{proof}

\subsection{Proof of Theorem~\ref{the:shrinking}}

Let us now turn to the proof of Theorem~\ref{the:shrinking}. By a
change of variables, we may assume that $T$ is a map of the unit
interval $[0,1]$. Since $\mu$ is not atomic and the space is
one-dimensional, $\mu (R) = 0$ holds.

We shall proceed as in the proof of
Theorem~\ref{the:randomlimsup}. The setting is similar since
\[
E_\alpha (x) = \limsup_n B(T^n (x), n^{-\alpha}),
\]
but some further complications occur since the points $T^n (x)$ are
not independent. However, the assumption on the decay of correlations
gives us enough asymptotic independence to carry out the proof.

As in the proof of Theorem~\ref{the:randomlimsup}, we put
\begin{align*}
  \Theta_n (t) &= \bigcap_{k=n}^\infty \{\, x : \mu (B(x,r)) < r^t
  \text{ for } r = 2^{- k} \,\}, \\ \Theta (t) &= \bigcap_{n=1}^\infty
  \Theta_n (t),
\end{align*}
and take $t_2 < \udimh \mu$, and $m_1$ so large that $\mu
(\Theta_{m_1} (t_2)) > 0$.

We let as before
\[
\mathscr{C}_m = \{\, D \in \mathscr{D}_{[\beta m]} : \mu (\Theta_{m_1}
(t_2) \cap D) \geq \frac{5}{6} \mu (D) \,\},
\]
and there exists a sequence $(m_k)$ such that the set
\[
\hat{\Theta} = \Theta_{m_1} (t_2) \cap \bigcap_{k=1}^\infty \cup
\mathscr{C}_{m_k}.
\]
satisfies $\mu (\hat{\Theta}) > \frac{1}{2} \mu (\Theta_{m_1} (t_2)) >
0$. By construction, the set $\hat{\Theta}$ has the following
property. If $D \in \mathscr{D}_{[\beta m_k]}$ for some $m_k$, then
either $D \subset \hat{\Theta}^\complement$ or
\begin{equation} \label{eq:propertyofDelta2}
  \mu (\hat{\Theta} \cap D) \geq \frac{5}{6} \mu (D) - \frac{1}{6}
  2^{-[\beta m_k]t_0}.
\end{equation}

Let $\varepsilon > 0$ and $t_0 = \sup \{\, t: G_\mu (t) \geq t - 2
\varepsilon \,\}$. Take $\beta > \alpha$ such that $\frac{1}{\beta}
\leq t_0$. Let $G = \sup \{\, G_\mu (t) : t \geq t_0 \,\}$. As in the
proof of Theorem~\ref{the:randomlimsup}, we define a sparse sequence
$n_k$ as follows.

For each $n$, let $A_n$ be the union of those cubes of $\mathscr{D}_n$
which have $\mu$-measure at most $2^{-n t_0}$. By Lemma~\ref{lem:t0},
there is a number $n_0$ such that $\mu (A_n) \leq 2^{-\varepsilon n}$
for $n > n_0$.

We choose the sequence $n_k$ so that $\{n_k\} \subset \{m_k\}$ and
\begin{equation} \label{eq:nk2}
  \frac{1}{4} 2^{- [\beta n_k] t_0} > \sum_{l = k+1}^\infty 2^{-
    \varepsilon [\beta n_l]}
\end{equation}
holds for all $k \geq 1$ and $\beta n_1 > n_0$. Let $B_n = A_n^\complement$ and
\[
C = \bigcap_{k=1}^\infty B_{[\beta n_k]} \cap \hat{\Theta}.
\]
We have
\begin{align*}
  \mu (C) &\geq \mu (\hat{\Theta}) - \sum_{k=1}^\infty \mu (A_{[\beta
      n_k]}) \geq \mu (\hat{\Theta}) - \sum_{k=1}^\infty 2^{-
    \varepsilon [\beta n_k]} \\ &\geq \mu (\hat{\Theta}) -
  \sum_{k=[\beta n_1]}^\infty 2^{-\varepsilon k} = \mu (\hat{\Theta})
  - \frac{2^{-\varepsilon [\beta n_1]}}{1 - 2^{-\varepsilon}}.
\end{align*}
Hence, if $t_1 < \ldimh \mu$, by choosing $n_1$ sufficiently large, we
can ensure that $\mu (C) > \frac{1}{2}$, and we can make $\mu (C)$ as
close to $1$ as we please. Similarly, if $t_1 < \udimh \mu$, then we
can ensure that $\mu (C) > \frac{1}{2} \mu (\hat{\Theta}) > 0$ by
choosing $n_1$ sufficiently large. We assume that $\mu (C) >
\frac{1}{2} \mu (\hat{\Theta})$ holds.

Let $f$ be the indicator function of the set $C$. Since $\frac{1}{2}
\mu (\hat{\Theta}) < \int f \, \mathrm{d} \mu < 1$, we have by
Birkhoff's ergodic theorem that for almost every $x$, there exists a
number $N(x)$ such that
\begin{equation} \label{eq:frequency}
  2^{-(n-1)} \sum_{k = 2^{n-1} + 1}^{2^n} f (T^k x) > \frac{1}{2} \mu
  (\hat{\Theta})
\end{equation}
for all $n > N (x)$.

Let
\[
\mu_n = 2^{-(n-1)} \sum_{k = 2^{n-1} + 1}^{2^n} \frac{\mu |_{D_{[\beta
        n]} (T^k x)}}{\mu (D_{[\beta n]} (T^k x))},
\]
where $\frac{0}{0}$ should be interpreted as $0$. The measure $\mu_n$
depends on $x$, but we suppress this dependence from the notation. If
$n$ is large we have
\[
\supp \mu_n \subset \bigcup_{k = 2^{n-1}+1}^{2^n} B(x,k^{-\alpha}),
\]
since $\beta > \alpha$. Moreover, for almost all $x$, by Birkhoff's
theorem, $\mu_n \to \mu$ weakly as $n \to \infty$.

Put $\nu_k = c_k(x) \mu_{n_k}|_C$, where $c_k (x)$ is a constant
chosen so that $\nu_k$ is a probability measure, that is
\[
c_k (x) = \frac{1}{\mu_{n_k} (C)}.
\]
This makes $c_k (x)$ well-defined for almost all $x$ if $k$ is larger
than some number $K(x)$. In fact, we have by \eqref{eq:frequency} that
\[
c_k (x) \in \biggl[1,\frac{2}{\mu(\hat{\Theta})} \biggr), \qquad
  \text{if } n_k > N(x).
\]

By Birkhoff's ergodic theorem, we have that $\nu_k \to \nu$ weakly as
$k \to \infty$, where $\nu = \frac{\mu|_C}{\mu (C)}$.
  
We consider the $(\nu, \theta)$-energies 
\[
I_\nu^\theta (\nu_k) = \iint \nu (Q)^{-\theta} \, \mathrm{d} \nu_k
\mathrm{d} \nu_k.
\]
Our aim is to show that, almost surely, there is a sequence $k_l$
along which $I_\mu^\theta (\nu_{k_l})$ are uniformly bounded.

Given an integer $m$ we define an approximation of the function $Q$,
introduced in Section~\ref{sec:inhomogeneous}. Let $Q_m \colon
\mathbb{R}^d \times \mathbb{R}^d \to \mathscr{D}$ be defined by
\[
Q_m (x,y) = D, \qquad x \neq y,
\]
where $D \in \mathscr{D}_n$ is chosen such that $x,y \in D$ and $n$ is
the largest integer with this property such that $n \leq m$. We then
have that
\[
Q (x,y) = Q_m (x,y)
\]
if and only if $Q (x,y) = D$ with $D \in \mathscr{D}_n$ and $n \leq
m$.
  
By the definition of $\nu_k$ we can write
\[
  I_\nu^\theta (\nu_k) = \frac{c_k (x)^2} {4^{n_k-1}} \sum_{i,j \in J_k}
  \int_{D_i \cap C} \int_{D_j \cap C} \nu (Q)^{-\theta} \,
  \frac{\mathrm{d} \mu}{\mu (D_i)} \frac{\mathrm{d} \mu}{\mu (D_j)},
\]
where $D_i = D_{[\beta n_k]} (T^i x)$ and $J_k = \{2^{n_k-1} +1,
\ldots, 2^{n_k} \}$. Assuming that $c_k (x) \leq 2$, which holds
almost surely if $k$ is large enough, we therefore have
\begin{equation} \label{eq:nuenergy}
  I_\nu^\theta (\nu_k) = 4^{2-n_k} \sum_{i,j \in J_k}
  \int_{D_i \cap C} \int_{D_j \cap C} \nu (Q)^{-\theta} \,
  \frac{\mathrm{d} \mu}{\mu (D_i)} \frac{\mathrm{d} \mu}{\mu (D_j)}.
\end{equation}

If $\mu (D_i) \leq 2^{-[\beta n_k] t_0}$, then $D_i \subset A_{[\beta
    n_k]}$, so that $D_i \subset C^\complement$ and $\nu (D_i) = 0$. Otherwise,
if $\mu (D_i) > 2^{-[\beta n_k] t_0}$ then, either $D_i \subset
\hat{\Theta}^\complement \cup A_{[\beta n_l]}$ for some $l < k$ and $\nu (D_i) =
0$, or
\begin{align*}
  \nu (D) &\geq \mu (C \cap D) \geq \mu (\hat{\Theta} \cap D) - \mu
  \biggl( \bigcup_{l = k+1}^\infty A_{[\beta n_l]} \biggr) \\ &\geq
  \mu (\hat{\Theta} \cap D) - \sum_{l=k+1}^\infty 2^{-\varepsilon
    [\beta n_l]} \\ &\geq \frac{5}{6} \mu (D) - \frac{1}{6} 2^{-[\beta
      n_k] t_0} - \frac{1}{6} 2^{-[\beta n_k] t_0} \geq \frac{1}{2}
  \mu (D),
\end{align*}
holds by \eqref{eq:nk2} and \eqref{eq:propertyofDelta2}. In the other
direction we have
\[
\nu (D_i) = \frac{\mu (C \cap D_i)}{\mu (C)} \leq \frac{2}{\mu
  (\hat{\Theta})} \mu (D_i).
\]
We conclude that either $\nu (D_i) = 0$ or
\begin{equation} \label{eq:mu-nu-ratio}
  \frac{1}{2} \leq \frac{\nu (D_i)}{\mu (D_i)} \leq \frac{2}{\mu
    (\hat{\Theta})}.
\end{equation}
We also have
\begin{equation} \label{eq:nuestimate}
  D_i \cap C = \emptyset \qquad \text{or} \qquad \nu (D_i) \geq
  2^{-[\beta n_k] t_0 - 1}.
\end{equation}

Combining \eqref{eq:nuenergy} and \eqref{eq:mu-nu-ratio}, we obtain
\begin{align*}
  I_\nu^\theta (\nu_k) &\leq \frac{4^{3-n_k}}{\mu (\hat{\Theta})^2}
  \sum_{i,j \in J_k} \int_{D_i \cap C} \int_{D_j \cap C} \nu
  (Q)^{-\theta} \, \frac{\mathrm{d} \mu}{\nu (D_i)} \frac{\mathrm{d}
    \mu}{\nu (D_j)} \\ &= 4^{3-n_k} \frac{\mu(C)^2}{\mu
    (\hat{\Theta})^2} \sum_{i,j \in J_k} \int_{D_i} \int_{D_j} \nu
  (Q)^{-\theta} \, \frac{\mathrm{d} \nu}{\nu (D_i)} \frac{\mathrm{d}
    \nu}{\nu (D_j)} \\ &\leq 4^{3-n_k} \sum_{i,j \in J_k} \int_{D_i}
  \int_{D_j} \nu (Q)^{-\theta} \, \frac{\mathrm{d} \nu}{\nu (D_i)}
  \frac{\mathrm{d} \nu}{\nu (D_j)}.
\end{align*}

If either $D_i \cap C = \emptyset$ or $D_j \cap C = \emptyset$, then
\[
\int_{D_i} \int_{D_j} \nu (Q)^{-\theta} \, \frac{\mathrm{d} \nu}{\nu
  (D_i)} \frac{\mathrm{d} \nu}{\nu (D_j)} = 0.
\]

Suppose now that both $D_i \cap C \neq \emptyset$ and $D_j \cap C \neq
\emptyset$. We consider three cases.
  
Case 1: If $i = j$ then $D_i = D_j$ and
\begin{align*}
  \int_{D_i} \int_{D_j} \nu (Q)^{-\theta} \, \frac{\mathrm{d} \nu}{\nu
    (D_i)} \frac{\mathrm{d} \nu}{\nu (D_j)} &= \int_{D_i} \int_{D_i}
  \nu (Q)^{-\theta} \, \frac{\mathrm{d} \nu}{\nu (D_i)}
  \frac{\mathrm{d} \nu}{\nu (D_i)} \\ &\leq \frac{1}{1 - \theta} \nu
  (D_i)^{-\theta} \\ &= \frac{1}{1-\theta} \nu (Q_{[\beta n_k]} (T^i
  x, T^j x))^{-\theta},
\end{align*}
by Lemma~\ref{lem:energyestimate}.

Case 2: If $i \neq j$ and $D_i \neq D_j$, then $Q$ is constant and
equal to $Q(T^i x, T^j x)$ on $D_i \times D_j$ and we get
\[
\int_{D_i} \int_{D_j} \nu (Q)^{-\theta} \, \frac{\mathrm{d} \nu}{\nu
  (D_i)} \frac{\mathrm{d} \nu}{\nu (D_j)} = \nu (Q (T^i x, T^j
x))^{-\theta}.
\]

Case 3: If $i \neq j$ and $D_i = D_j$, then as in the case $i = j$,
we get by Lemma~\ref{lem:energyestimate} that
\begin{align*}
  \int_{D_i} \int_{D_j} \nu (Q)^{-\theta} \, \frac{\mathrm{d} \nu}{\nu
    (D_i)} \frac{\mathrm{d} \nu}{\nu (D_j)} & \leq \frac{1}{1-\theta}
  \nu (D_i)^{-\theta} \\ &= \frac{1}{1-\theta} \nu (Q_{[\beta n_k]}
  (T^i x, T^j x))^{-\theta}.
\end{align*}
  
Taken together, these estimates show that
\begin{multline*}
  \int_{D_i} \int_{D_j} \nu (Q)^{-\theta} \, \frac{\mathrm{d} \nu}{\nu
    (D_i)} \frac{\mathrm{d} \nu}{\nu (D_j)} \\ \leq
  \left\{ \begin{array}{ll} 0 & \text{if } \nu (D_i) \nu (D_j) = 0,
    \\ \frac{1}{1-\theta} \nu (Q_{[\beta n_k]} (T^i x, T^j
    x))^{-\theta} & \text{otherwise}. \end{array} \right.
\end{multline*}
Let
\[
F_k (x,y) = \min\{\nu (Q_{[\beta n_k]} (x,
y))^{-\theta}, 2^{\theta + \theta t_0 \beta n_k} \}.
\]
By \eqref{eq:nuestimate} we have that
\[
\nu (Q_{[\beta n_k]} (x, y))^{-\theta} > 2^{\theta + \theta t_0 \beta
  n_k} \geq 2^{\theta + \theta t_0 [\beta n_k]}
\]
only if $\nu (Q_{[\beta n_k]} (x, y)) = 0$.
We therefore have
\[
(1- \theta) I_\nu^\theta (\nu_k) \leq 4^{3-n_k} \sum_{i,j \in J} F_k
(T^i x, T^j x).
\]

This sum is split into two parts according to
\begin{align*}
  (1 - \theta) I_\nu^\theta (\nu_k) &= I_1 (k) + I_2 (k), \\ I_1 (k)
  &= 4^{3 - n_k} \cdot 2 \sum_{\substack{i,j \in J \\i>j}} F_k (T^i x, T^j x),
  \\ I_2 (k) &= 4^{3 - n_k} \sum_{i \in J} F_k (T^i x, T^i x).
\end{align*}

We will estimate the expected value of $I_1 (k)$.
We have
\begin{align*}
  \E I_1 (k) & \leq 4^{4-n_k} \sum_{\substack{i,j \in J \\i>j}} \int F_k
  (T^i x, T^j x) \, \mathrm{d} \mu (x) \\ &= 4^{4-n_k}
  \sum_{\substack{i,j \in J \\ i>j}} \int F_k (T^{i-j} x, x) \,
  \mathrm{d} \mu (x).
\end{align*}
We will use the decay of correlations to estimate the integrals
above. The following lemma which is a variation of Lemma~3 of Persson
and Rams \cite{perssonrams} will be used.

\begin{lemma} \label{lem:perssonrams}
  Suppose $F \colon [0,1]^2 \to \mathbb{R}$ is a piecewise continuous
  and non-negative function, and that $V$ and $M$ are two constants
  such that, for each fixed $x$, the function $f \colon y \mapsto
  F(x,y)$ satisfies $\var f \leq V$ and $\int f \, \mathrm{d}\mu \leq
  M$. Then
  \[
  \int F(T^n x, x) \, \mathrm{d}\mu (x) \leq M + (V + M) p
  (n).
  \]
\end{lemma}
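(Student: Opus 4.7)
The plan is to approximate $F$ by a step function in the first variable, apply the decay-of-correlations hypothesis \eqref{eq:decay} term by term, and then refine the partition. Fix a large integer $N$, partition $[0,1]$ into intervals $J_1,\dots,J_N$ whose endpoints carry no $\mu$-mass, pick reference points $a_k \in J_k$, and set
\[
F_N(a,x) = \sum_{k=1}^{N} \mathbf{1}_{J_k}(a)\, F(a_k,x).
\]
For each $a$, the slice $F(a,\cdot)$ is a non-negative BV function on the probability space $([0,1],\mu)$ with $\|F(a,\cdot)\|_1 \leq M$ and $\var F(a,\cdot) \leq V$, so it is pointwise bounded by $M+V$; this uniform majorant will later dominate the convergence.

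For fixed $N$, one expands
\[
\int F_N(T^n x, x)\,\mathrm{d}\mu(x) = \sum_{k=1}^{N}\int (\mathbf{1}_{J_k}\circ T^n)(x)\,F(a_k,x)\,\mathrm{d}\mu(x)
\]
and applies \eqref{eq:decay} to each term with $f = \mathbf{1}_{J_k}$ and $g = F(a_k,\cdot)$. Since $F \geq 0$, the resulting absolute-value bound may be kept one-sided to yield
\[
\int (\mathbf{1}_{J_k}\circ T^n)\,F(a_k,\cdot)\,\mathrm{d}\mu \leq \mu(J_k)\int F(a_k,y)\,\mathrm{d}\mu(y) + p(n)\,\mu(J_k)\,\|F(a_k,\cdot)\|.
\]
Inserting the hypotheses $\int F(a_k,y)\,\mathrm{d}\mu(y) \leq M$ and $\|F(a_k,\cdot)\| \leq M+V$, and summing using $\sum_k \mu(J_k) = 1$, produces
\[
\int F_N(T^n x, x)\,\mathrm{d}\mu(x) \leq M + (V+M)\,p(n),
\]
a bound uniform in $N$.

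The remaining step is to send $N \to \infty$ and recover $\int F(T^n x, x)\,\mathrm{d}\mu(x)$ on the left-hand side, and this is the main obstacle. For $\mu$-almost every $x$, the function $F(\cdot, x)$ has only finitely many discontinuities, which form a $\mu$-null set since $\mu$ is non-atomic; by $T$-invariance, $T^n x$ then misses these discontinuities for almost every $x$, so along a refining sequence of partitions with vanishing mesh one has $F_N(T^n x, x) \to F(T^n x, x)$ pointwise $\mu$-almost surely. Dominated convergence against the majorant $M+V$ then closes the argument and delivers the asserted estimate.
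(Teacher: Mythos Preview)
The paper does not prove this lemma; it merely records it as a variant of Lemma~3 in Persson--Rams \cite{perssonrams} and proceeds to apply it. Your discretise--apply--sum argument is the natural route, and the bound $\int F_N(T^n x,x)\,\mathrm{d}\mu(x)\le M+(V+M)p(n)$ is obtained correctly from \eqref{eq:decay}.

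The gap is in the passage to the limit. You assert that, by $T$-invariance, $T^n x$ avoids the discontinuity set $D_x$ of the slice $a\mapsto F(a,x)$ for $\mu$-almost every $x$. Invariance only gives $\mu(T^{-n}A)=\mu(A)$ for a \emph{fixed} set $A$, whereas $D_x$ here depends on $x$; from $\mu(D_x)=0$ for each $x$ one cannot conclude $\mu\{x:T^n x\in D_x\}=0$. Nothing in the stated hypotheses prevents the discontinuity set of $F$ from containing a piece of the graph $\{(T^n x,x):x\in[0,1]\}$ (take for instance $F=\mathbf 1_G$ with $G$ the region below that graph; each slice $F(a,\cdot)$ is a step function with uniformly bounded variation), and then $F_N(T^n x,x)$ need not converge to $F(T^n x,x)$ anywhere. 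In the paper's application the function $F_k$ is piecewise constant on dyadic rectangles, so its discontinuity set is a finite union of horizontal and vertical segments; in that case $D_x$ equals a fixed finite set of dyadic endpoints for $\mu$-a.e.\ $x$, and your invariance step is legitimate. To prove the lemma in the generality stated you should either add the (harmless) hypothesis that the discontinuity set of $F$ projects to a $\mu$-null set in the first coordinate, or otherwise justify that the graph of $T^n$ meets that discontinuity set in a $\mu$-null set of $x$.
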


We will use Lemma~\ref{lem:perssonrams} with $F = F_k$.  Clearly,
$F_k$ is piecewise constant and hence piecewise continuous. Moreover,
$F_k$ is bounded by $2^{\theta + \theta t_0 [\beta n_k]}$ by
\eqref{eq:nuestimate}.

Let $x$ be fixed and consider the function $f_x \colon y \mapsto F_k
(x,y)$. The function $f_x$ is increasing on $[0,x]$ and decreasing on
$[x,1]$. Hence, $f_x$ is of bounded variation, and
\[
\var f_x \leq 2^{1 + \theta + \theta t_0 [\beta n_k]}.
\]
By \eqref{eq:measuredecay}, we have
\[
\int f_x \, \mathrm{d}\mu \leq \sum_{k=0}^{[\beta n]} \mu (D_k
(x))^{1-\theta} \leq M := \sum_{k=0}^\infty c_1^{1-\theta} 2^{-
  (1-\theta) t_1 k} < \infty.
\]
It therefore follows by Lemma~\ref{lem:perssonrams} that
\[
\int F_k (T^{i-j} x, x) \, \mathrm{d} \mu (x) \leq M + (V(k) + M) p
(i-j).
\]
Consequently, using that $p$ is summable, we obtain
\begin{align*}
  \E I_1 (k) &\leq 4^{4-n_k} \sum_{i=2^{n_k-1}+2}^{2^{n_k}}
  \sum_{j=2^{n_k-1}+1}^{i-1} \bigl( M + (V(k) + M) p (i-j) \bigr)
  \\ &= 4^{4-n_k} \sum_{i=2^{n_k-1}}^{2^{n_k}} \bigl( 2^{n_k} M +
  (V(k) + M) \sum_{j=2^{n_k-1}+1}^{i-1} p (i-j) \bigr) \\ &\leq
  4^{4-n_k} \sum_{i=2^{n_k-1}}^{2^{n_k}} \bigl( 2^{n_k} M + (V(k) + M)
  \sum_{j=0}^\infty p (j) \bigr) \\ &\leq 16 M + 16 \cdot 2^{-n_k}
  (V(k) + M) \sum_{i=1}^\infty p(i) \\ & \leq 16 M + c_0 2^{(\beta
    \theta t_0 -1) n_k},
\end{align*} 
where $c_0$ is a constant.
Hence, $\E I_1 (k)$ is uniformly bounded in $k$, provided $\theta \leq
\frac{1}{\beta t_0}$.

To analyse $I_2 (k)$, we write
\begin{align*}
  I_2 (k) &= 4^{3-n_k} \sum_{i \in J} F_k (T^i x, T^i x) \leq
  4^{3-n_k} \sum_{i \in J} 2^{\theta + \theta t_0 \beta n_k} \\ &\leq
  4^{3-n_k} 2^{n_k} 2^{\theta + \theta \beta t_0 n_k} \leq c_1
  2^{(\theta \beta t_0 - 1) n_k}.
\end{align*}
Hence, $I_2 (k)$ is bounded by $c_1$ if $\theta \leq \frac{1}{\beta
  t_0}$.

Take $\theta < \frac{1}{\beta t_0} \leq 1$. Then $I_2 (k)$ is bounded
by $c_1$ for all $k$ and the expectation of $I_1 (k)$ is bounded by
$16M + c_0$ for all $k$. We can conclude that for almost all $x$, that
is almost surely, there exists a subsequence $k_l$ and a constant $c_2$
such that $I_1 (k_l) < c_2$. We then have that
\[
I_\nu^\theta (\nu_{k_l}) \leq \frac{1}{1-\theta} (16M + c_0 + c_2)
\]
for all $l$. Lemma~\ref{lem:frostman} now implies that $E_\alpha (x)
\in \mathscr{G}_\nu^\theta$.

From the statement that $E_\alpha (x) \in \mathscr{G}_\nu^\theta$
holds for a.e.\ $x$, we conclude in the same way as in the proof of
Theorem~\ref{the:randomlimsup} that $E_\alpha (x) \in
\mathscr{G}_\mu^\theta$ holds for a.e.\ $x$, with $\theta = (s
\alpha)^{-1}$. (We use that $\mu(C)$ can be made arbitrarily close to
$1$ and that $\beta$ and $t_0$ can be taken as close to $\alpha$ and
$s$ as we please.)

Finally, as in the proof of Theorem~\ref{the:randomlimsup}, by taking
$t_2$ close to $\udimh \mu$, we find that
\[
\dimh \bigcap_k E_\alpha (x_k) \geq \frac{1}{\alpha} \frac{\udimh
  \mu}{s}
\]
whenever the points $x_1, x_2, \ldots$ are elements of a set $A$ of
full $\mu$ measure.

\end{document}